\newtheorem{lemma}{Lemma}[section]
\newtheorem{proposition}[lemma]{Proposition}
\newtheorem{theorem}[lemma]{Theorem}
\newtheorem{alphabetictheorem}{Theorem}
\newtheorem{corollary}[lemma]{Corollary}
\newtheorem{assumption}[]{Assumption}
\def\acts{\mathrel{\reflectbox{$\righttoleftarrow$}}}
\theoremstyle{definition}
\newtheorem{alphabeticexample}[alphabetictheorem]{Example}
\newtheorem{definition}[lemma]{Definition}
\newtheorem{remark}[lemma]{Remark}
\newcommand{\Ker}[1]{\mathrm{Ker} \, #1}
\newcommand{\Coker}[1]{\mathrm{Coker} \, #1}
\newcommand{\rank}[1]{\mathrm{rank} \, #1}
\newcommand{\codim}[1]{\mathrm{codim} \, #1}
\newcommand{\diff}[1]{{d}  #1}
\newcommand{\Z}{\mathbb{Z}}
\newcommand{\N}{\mathbb{N}}
\renewcommand{\L}{{L}}
\newcommand{\R}{\mathbb{R}}
\newcommand{\Complex}{\mathbb{C}}
\newcommand{\T}{{T}}
\renewcommand{\N}{{N}}
\newcommand{\Cont}{{C}}
\newcommand{\tr}[1]{\mathrm{tr} \, #1}
\newcommand{\GL}{\mathrm{GL}}
\newcommand{\sgrad}[1]{X_{#1}}
\newcommand{\F}{\mathcal{F}}
\newcommand{\Diff}{\mathrm{Diff}}
\newcommand{\ADiff}{\mathrm{LDiff}}
\newcommand{\id}{\mathrm{id}}
\renewcommand{\sp}{\mathfrak{sp}}
\renewcommand{\i}{\sqrt{-1}}
\newcounter{ab}
\date{}
\title{Smooth invariants of focus-focus singularities and obstructions to product decomposition}
\author{Alexey Bolsinov\thanks{Department of Mathematical Sciences, Loughborough University, and  Faculty of Mechanics and Mathematics, Moscow State University.  E-mail: {\tt A.Bolsinov@lboro.ac.uk}} \,  
 and Anton Izosimov\thanks{
Department of Mathematics,
University of Arizona.
E-mail: {\tt izosimov@math.arizona.edu}
}
}
\begin{document}
\maketitle
\begin{abstract}{We study focus-focus singularities (also known as nodal singularities, or pinched tori) of Lagrangian fibrations on symplectic $4$-manifolds. We show that, in contrast to elliptic and hyperbolic singularities, there exist homeomorphic focus-focus singularities which are not diffeomorphic. Furthermore, we obtain an algebraic description of the moduli space of focus-focus singularities up to smooth equivalence, and show that for double pinched tori this space is one-dimensional. Finally, we apply our construction to disprove Zung's conjecture which says that any non-degenerate singularity can be {smoothly} decomposed into an almost direct product of standard singularities.}\end{abstract}
\tableofcontents

\section{Introduction}

The main goal of the present paper  is to study one interesting property of focus-focus singularities  (also known as nodal singularities, or pinched tori) in the context of the theory of singular Lagrangian fibrations or, which is essentially the same, in the context of topology of finite-dimensional integrable Hamiltonian systems.   

From the viewpoint of symplectic topology,  an integrable system on a symplectic manifold $(M^{2n}, \omega)$ is defined by a collection of Poisson commuting functions $f_1, f_2, \dots, f_n \colon M^{2n} \to \R$ which are independent almost everywhere on $M^{2n}$.
Throughout the paper we assume that the corresponding moment map $\mathcal F=(f_1,\dots, f_n)\colon M^{2n} \to \R^n$ is proper. In particular, the Hamiltonian flows generated by $f_1, f_2, \dots, f_n$ are all complete so that  $M^{2n}$ is endowed with the natural $\R^n$-action generated by these flows. The fibers of the singular Lagrangian fibration on $M^{2n}$, associated with this integrable system, are connected components of  level sets $\mathcal F^{-1}(a)$, $a\in \R^n$.   According to the Arnold-Liouville theorem,  regular fibers are Lagrangian tori of dimension $n$.  Here, however, we are mainly interested in singular fibers containing those points $P\in M^{2n}$ where $\rank \diff\mathcal F (P)<n$.  

In the case of non-degenerate singularities, topological description of singular fibers in the semiglobal setting is due to N.T.\,Zung \cite{AL}. His fundamental decomposition theorem states that under some mild additional conditions, such a singularity is homeomorphic to an almost direct product of elementary bricks of four types:  regular, elliptic, hyperbolic, and focus-focus. The latter case is of particular interest as focus-focus singularities possess a number of remarkable properties and have far reaching applications in symplectic geometry, see e.g.~\cite{kontsevich2006affine, leung2010almost, bernard2009lagrangian}. Among numerous works on focus-focus singularities we would like to emphasize, first of all, the papers by  V.\,Matveev~\cite{Matveev} and N.T.\,Zung \cite{ZungFF, ZungFF2}  (topological classification), as well as  
S.\,V\~{u} Ng\d{o}c \cite{SanFF}  (symplectic classification). 

The properties and invariants we are going to discuss in this paper are related to the following phenomenon: unlike elliptic and hyperbolic case, there exist homeomorphic focus-focus singularities which are  not diffeomorphic.  In other words, in the focus-focus case there are non-trivial smooth invariants, somewhere between topological and symplectic ones previously studied.  This phenomenon was first noticed in \cite[Section 9.8.2]{intsys}.

 Our motivation to study smooth invariants of (not necessarily non-degenerate) singular Lagrangian fibrations comes from symplectic geometry.  Of course,  our primary goal is to classify such fibrations up to symplectomorphisms.   However, if we are looking for a symplectic map between two Lagrangian fibrations  $\F_i \colon M_i   \to B_i$, it is quite natural to do it in two steps.  First, we find a fiberwise diffeomorphism  $\Psi : M_1 \to M_2$. As a result we obtain two different symplectic forms on $M_1$,   the original one $\omega_1$ and the pullback $\omega':=\Psi^*\omega_2$,  such that the fibration given on $M_1$ is Lagrangian with respect to both of them. After this we can try to find another map $\Psi' : M_1 \to M_1$ such that each fiber is preserved and $\omega_1 = {\Psi' }^*\omega'$.   Working in this setting is more convenient for many reasons, for instance,  to ``compare''  two different symplectic forms on the same manifold we can use the usual Moser trick which can be naturally adapted to Lagrangian fibrations.

Recall that the topology of a focus-focus singularity  (for an integrable system on a symplectic $4$-manifold) is completely determined by the number of focus-focus points on the singular fiber. In particular,  if the singular fiber   contains $n$ focus-focus critical points  (and no other critical points!), then it is an $n$-pinched torus illustrated in Figure \ref{fig1}  (see more detailed description in Section \ref{sec:definitions}).

\def\nff{5}
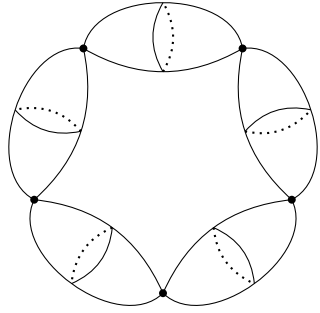
\begin{figure}[t]
		\centerline{
		\begin{tikzpicture}[thick, scale = 1.5, rotate = 90]
\foreach \a in {1,2,...,\nff}{
  \fill (\a*360/\nff + 180/\nff: 1.2cm) circle [radius=1pt];
  \draw [thin, name path = IN] (\a*360/\nff + 180/\nff: 1.2cm) to [bend left](\a*360/\nff + 360/\nff + 180/\nff: 1.2cm);
   \draw [thin, name path = OUT]  (\a*360/\nff + 180/\nff: 1.2cm)  to [bend right = 80](\a*360/\nff + 360/\nff + 180/\nff: 1.2cm);
   \path [name path = R]  (0,0) to (\a*360/\nff + 360/\nff: 1.5cm);
   \path [name intersections={of=IN and R,by=IIP}];
      \path [name intersections={of=OUT and R,by=OIP}];
       \draw [thin](IIP) to [bend left](OIP);
         \draw [dotted](IIP) to [bend right](OIP);
}
\end{tikzpicture}
}
			\caption{{\small focus-focus singularity with $n = \nff$ critical points}}\label{fig1}
	\end{figure}

In \cite{intsys}, there is only a short remark about existence of non-trivial smooth invariants starting from $n=2$  (for $n=1$,  all focus-focus singularities with one pinched point on the fiber are diffeomorphic). However no explanation of their nature is given.  This paper is aimed at filling this gap.  The description of smooth invariants for $n\ge 2$ will be given in Section~\ref{sec:smooth}. In brief, an $n$-pinched focus-focus singularity is determined (up to diffeomorphisms) by $n-1$ \textit{gluing maps} $\phi_{1,2}, \dots, \phi_{1,n}$ which prescribe how standard neighborhoods of $n$ focus-focus points are ``glued'' together. These maps can be interpreted as elements of  the group $G$ of germs at $z=0$ of local (real) diffeomorphisms of $\mathbb C$ fixing the origin. These diffeomorphisms are defined not uniquely, but only up to an action of the subgroup $H$ of \textit{liftable} germs  (Definition~\ref{def:liftable}) that consists of germs divisible by $z$ or $\bar z$. Therefore, the space of smooth structures on an $n$-pinched focus-focus singularity can be thought of as the quotient space of $G^{n-1}$ by the corresponding action of liftable germs. More precisely, we have the following result.
%
%
 \begin{alphabetictheorem}[=Theorem \ref{thm:thm1}]
 Two focus-focus singularities with $n$ pinch points are fiberwise diffeomorphic if and only if the corresponding gluing maps $\phi_{1,2}, \dots, \phi_{1,n}$  and $\tilde \phi_{1,2}, \dots, \tilde \phi_{1,n}$  are related by
\begin{equation}
\label{action}
 \tilde \phi_{1,i} = \psi_1\circ\phi_{1,i}\circ \psi_i^{-1},
\end{equation}
where $\psi_1, \dots, \psi_n \in H$ are liftable.
 
 In other words, smooth structures on an $n$-pinched focus-focus singularity are in one-to-one correspondence 
 with the orbits of the action of $H^n$ on $G^{n-1}$ defined by \eqref{action}.
The germ groups $G$ and  $H$ can be replaced by the corresponding groups of infinite jets.
 \end{alphabetictheorem}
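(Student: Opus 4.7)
The plan is to argue both directions of the equivalence by exploiting the gluing description of $n$-pinched singularities introduced in Section~\ref{sec:smooth}: such a singularity is obtained by taking $n$ copies of the standard single-pinch local model and gluing them along regular cylindrical pieces via diffeomorphisms whose germs on a chosen transversal are the $\phi_{1,i}$. Once identifications of the $n$ local neighborhoods with the standard model are fixed, the singularity is completely determined by the tuple $(\phi_{1,2},\dots,\phi_{1,n})\in G^{n-1}$, and changing those identifications yields precisely the action \eqref{action}. The whole proof thereby reduces to comparing the two data sets of identifications.

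For the necessity direction, I would take any fiberwise diffeomorphism $F\colon\mathcal{N}_1\to\mathcal{N}_2$ and restrict it to a saturated neighborhood of each pinch point. After composition with the fixed identifications of these neighborhoods with the standard local model on both sides, each restriction becomes a fiberwise self-diffeomorphism of the standard single-pinch model. The next step is to show that such a self-diffeomorphism is faithfully recorded by its germ $\psi_i$ on the transversal $\Complex$ and, crucially, that $\psi_i$ lies in the liftable subgroup $H$: this is automatic because the very existence of the local extension is the liftability condition of Definition~\ref{def:liftable}, and one then verifies that a liftable germ is necessarily divisible by $z$ or $\bar z$, reflecting the $\Z/2$ ambiguity between the holomorphic and antiholomorphic local models of the focus-focus point. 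Tracing $F$ through the gluing data then yields $\tilde\phi_{1,i}=\psi_1\circ\phi_{1,i}\circ\psi_i^{-1}$.

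For sufficiency, given $\psi_1,\dots,\psi_n\in H$ satisfying \eqref{action}, liftability provides fiberwise self-diffeomorphisms $\Psi_i$ of the $i$-th standard local neighborhood whose germs on the transversal equal $\psi_i$. The conjugation identity guarantees that on the overlap collars — the regular pieces along which the models are glued — the maps $\Psi_i$ intertwine the gluing $\phi_{1,i}$ with $\tilde\phi_{1,i}$, so they assemble into a global fiberwise diffeomorphism $\mathcal{N}_1\to\mathcal{N}_2$. The main technical care is in arranging that the $\Psi_i$ agree exactly on the gluing tubes rather than merely up to the correct transitions; this is handled by fixing the $\Psi_i$ first on the overlap collars using the matching relation and then extending inward by the fiberwise flows of the local $\R^2$-action, which is available on each standard model.

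The replacement of germs by infinite jets rests on two observations. First, by Borel's extension theorem every formal jet at the origin is realised by a smooth germ, so passage to jets is surjective on all three of $G$, $H$, and the gluing data. Second, any two germs with the same infinite jet at $z=0$ differ by a flat germ, and any flat perturbation of a gluing map is absorbed by the $H^n$-action since $H$ contains liftable germs with arbitrary flat tails. Hence the quotient $G^{n-1}/H^n$ is canonically identified with its jet-level analogue. The chief obstacle of the whole argument — and the point at which the specific subgroup $H$ is forced upon us — is the precise geometric characterization of those germs on a transversal that lift to fiberwise self-diffeomorphisms of the single-pinch local model; once this lifting criterion is established, the rest of the theorem reduces to bookkeeping with germs and to a standard collar patching.
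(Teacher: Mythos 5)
Your proposal follows essentially the same route as the paper: normal charts are a torsor under the liftable subgroup $H$ (Proposition~\ref{charOfliftable}), necessity comes from pushing normal charts forward under the global fiberwise diffeomorphism, sufficiency comes from lifting the $\psi_i$ locally and patching across the regular cylindrical pieces, and the jet statement follows from Borel surjectivity plus the fact that flat germs are liftable. The logical skeleton and all the key lemmas match.

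One technical step is shakier than in the paper, namely your mechanism for extending the locally defined lifts $\Psi_i$ across the cylinders. You propose to ``extend inward by the fiberwise flows of the local $\R^2$-action,'' but in the purely smooth setting of Definition~\ref{def:focussmooth} there is no distinguished $\R^2$-action, and even with one, flowing in from one collar gives you no control over matching the prescribed jet at the opposite collar of the same cylinder. The paper instead reduces to the following problem: given $\infty$-jets of orientation-preserving diffeomorphisms at the two boundaries of a cylinder (orientation-preserving thanks to Assumption~\ref{assumption4}, after possibly composing with $(u,v)\mapsto(v,u)$), realize them by a global diffeomorphism of the cylinder; this is done by extending the jets to actual diffeomorphisms near the boundary, joining them to the identity, and gluing the resulting time-dependent vector fields with a partition of unity. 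Your argument needs this (or an equivalent) interpolation step spelled out; as written, the patching claim is asserted rather than proved. Everything else, including the observation that flat perturbations of gluing maps are absorbed by the $H^n$-action, is correct and matches the paper.
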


Since the groups $G$ and $H$ are infinite-dimensional,  a complete description of $C^\infty$-smooth invariants, i.e. invariants of action \eqref{action}, is a non-trivial problem.  One can, however, describe $C^k$-invariants, i.e. invariants of the same action \eqref{action} with germ groups $G$ and $H$ replaced by the corresponding groups of $k$-jets. For instance, in the simplest case $k=1$,  the group $G$  is isomorphic to $\GL_2(\R)$, and  $H \subset \GL_2(\R)$ consists of $\mathbb C$-linear  and $\mathbb C$-antilinear functions.  The orbits of the corresponding action \eqref{action} are easy to describe:
 \begin{alphabetictheorem}[=Theorem \ref{theorem:firstOrderInv} + Proposition \ref{prop3.4}]\label{thmb}
Focus-focus singularities with $n$ pinch points have $2n-3$ $C^1$-invariants. These invariants are given,
in terms of the corresponding Poisson-commuting Hamiltonians, by $n-1$ complex numbers
   \begin{equation}
        \mu_i = \frac{ \lambda_i - \lambda_1}{ { \lambda}_i + \bar \lambda_1},
    \end{equation}
    considered up to multiplication by the same complex number of absolute value $1$ and simultaneous complex conjugation.
Here $\lambda_i$ is a (suitably chosen) eigenvalue a generic Hamiltonian linearized at $i$'th focus-focus point. 
 \end{alphabetictheorem}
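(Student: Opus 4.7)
The plan is to apply Theorem A in its $1$-jet formulation and reduce \eqref{action} to a linear algebra problem on $\GL_2(\R)$. First I would identify the $1$-jet of an element of $G$ with an invertible $\R$-linear map $\mathbb{C}\to\mathbb{C}$, written uniquely as $z\mapsto az+b\bar z$ with $(a,b)\in\mathbb{C}^2$, $|a|\neq|b|$. The $1$-jets of $H$ then form two components: the $\mathbb{C}$-linear maps $z\mapsto cz$ (with $b=0$) and the $\mathbb{C}$-antilinear maps $z\mapsto d\bar z$ (with $a=0$). Writing $\phi_{1,i}\leftrightarrow (a_i,b_i)$ and taking $\psi_k(z)=c_k z$ on the linear component of $H$, the action \eqref{action} at the $1$-jet level reads $(a_i,b_i)\mapsto (c_1 c_i^{-1} a_i,\, c_1 \bar c_i^{-1} b_i)$.

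Next I would analyze the orbits. Since generically $a_i\neq 0$ for all $i$, setting $c_i:=c_1 a_i$ normalizes $a_i=1$, and this leaves only the residual parameter $c_1\in\mathbb{C}^*$, which multiplies every $b_i$ by the common phase $c_1/\bar c_1\in S^1$. The antilinear components of $H$ contribute one additional involution acting as simultaneous complex conjugation on $(b_2,\dots,b_n)$. Thus the orbit space is $\mathbb{C}^{n-1}/(S^1\times\Z/2)$ with diagonal action, which yields exactly $2n-3$ continuous invariants and accounts for the quotient structure stated in the theorem.

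It then remains to identify each normalized $b_i$ with $\mu_i=(\lambda_i-\lambda_1)/(\lambda_i+\bar\lambda_1)$ up to the $S^1\times\Z/2$-ambiguity; this is the content of Proposition~\ref{prop3.4}. The plan here would be to put a generic Hamiltonian into Williamson normal form at each focus-focus point $p_i$, obtaining a canonical complex coordinate $z_i$ (unique up to the $1$-jet action of $H$) together with a distinguished eigenvalue $\lambda_i$; computing the $1$-jet of the transition between $z_1$ and $z_i$ along the singular fiber would produce the claimed M\"obius-type expression. I expect this last step to be the main obstacle: the orbit analysis is essentially formal linear algebra, whereas matching the normal forms at two different critical points via the singular fiber is genuinely geometric, and it is this matching that forces the specific form $\lambda_i-\lambda_1$ in the numerator and $\lambda_i+\bar\lambda_1$ in the denominator.
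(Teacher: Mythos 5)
Your first half --- the reduction to the $1$-jet action and the orbit analysis --- is correct and is essentially the paper's own argument (Proposition~\ref{firstOrderInavriants} and Corollary~\ref{cor2.13}): acting by $\psi_i(z)=c_iz$ with $c_i=c_1a_i$ normalizes each component to $z+\mu_i\bar z$ with $\mu_i=b_i/\bar a_i$, the residual linear gauge is the common phase $c_1/\bar c_1$, and the all-antilinear tuples contribute simultaneous conjugation; this gives the $2n-3$ count. (Two small points: on orientation-preserving representatives one has $a_i\neq 0$ always, not just generically, and the group acting on the $\mu_i$'s is $\mathrm{O}(2)$, a semidirect rather than direct product --- neither affects the conclusion. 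Note also that only tuples $(\psi_1,\dots,\psi_n)$ which are all linear or all antilinear preserve the orientation-preserving representatives, which is why no ``mixed'' transformations appear.)

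The genuine gap is in the second half, which you have only planned, and the plan omits the one fact that makes the computation possible. Williamson/Eliasson normal form at a single point $P_i$ produces a symplectic normal chart $\phi_i$ on the base, but the charts at different points are a priori unrelated, so ``computing the $1$-jet of the transition along the singular fiber'' is not yet a computation. The paper's key input is Theorem~\ref{th:focus}(4): the neighborhood of the singular fiber carries a global Hamiltonian $S^1$-action whose generator coincides near each $P_i$ with $\mathrm{Im}\,\phi_i$ up to sign, so the symplectic normal charts can all be chosen with the \emph{same} imaginary part. With that normalization one takes $H=\mathrm{Re}\,\phi_1$ (so $\lambda_1=1$) and writes $H=a_i\,\mathrm{Re}\,\phi_i+b_i\,\mathrm{Im}\,\phi_i+\dots$ (so $\lambda_i=a_i+\i\, b_i$); the gluing map $\phi_{1,i}=\phi_1\circ\phi_i^{-1}$ then has $1$-jet $x+\i y\mapsto a_ix+b_iy+\i y$, and
$$\mu_i=\frac{\partial\phi_{1,i}/\partial\bar z\,(0)}{\overline{\partial\phi_{1,i}/\partial z}\,(0)}=\frac{a_i+\i\, b_i-1}{a_i+\i\, b_i+1}=\frac{\lambda_i-\lambda_1}{\lambda_i+\bar\lambda_1}.$$
Without this common-$S^1$-action normalization (and the attendant canonical choice of one eigenvalue out of the quadruple $\pm a_i\pm\i\, b_i$, needed for the formula to be well defined up to the stated ambiguity), the specific M\"obius form cannot be derived, so this step needs to be supplied rather than anticipated.
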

 In Section \ref{sec:foics} we also give a geometric interpretation of these invariants in terms of complex structures on the base of a focus-focus fibration.

 Although one can similarly define and describe $C^k$-invariants for every $k$, for $n=2$ pinch points this is unnecessary. In this case the $C^1$-invariant (which is unique since $2n-3=1$) already separates generic orbits of action \eqref{action}, i.e. generic orbits are of codimension one:
\begin{alphabetictheorem}[=Theorem \ref{thm:c1tocinfty}]
The (regular part of the) space of double-pinched focus-focus singularities, considered up to $C^\infty$-diffeomorphisms, is one-dimensional and parametrized by the $C^1$-invariant.
\end{alphabetictheorem}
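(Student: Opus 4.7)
By Theorem A, smooth equivalence classes of double-pinched focus-focus singularities correspond to orbits of $H\times H$ acting on $G$ by $\phi\mapsto \psi_1\circ\phi\circ\psi_2^{-1}$, and by Theorem B the $C^1$-invariant $\mu$ descends to a function on orbits taking values in a space of real dimension one. To prove the claim it suffices to show that, on an open dense ``regular'' subset of $G$, the $C^1$-invariant is complete. The plan is the standard two-step scheme: match the Taylor series of the two germs order by order, solving a linear homological equation at each step, and then promote the resulting formal conjugation to an honest smooth one.

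\textbf{Formal equivalence via homological equations.} Let $\phi,\tilde\phi\in G$ share the same $\mu$. Using the transitivity of $H\times H$ on the level of 1-jets with given $\mu$ (Theorem B for $n=2$), one normalizes both germs to a common 1-jet $\phi_0=\phi_0(\mu)$. Then one looks for $\psi_i=\id+\xi_i$ with $\xi_i$ of order $\ge 2$ lying in the Lie algebra of the identity component of $H$, i.e., $\xi_i$ divisible by $z$; the case $\xi_i$ divisible by $\bar z$ is handled analogously. Suppose inductively that the $(k-1)$-jets of $\phi$ and $\tilde\phi$ have already been matched, and let $p_k$ denote their degree-$k$ homogeneous difference. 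The order-$k$ contribution of $(\xi_1,\xi_2)$ to $\psi_1\circ\phi\circ\psi_2^{-1}-\tilde\phi$ is
\begin{equation}
\label{HE}
\xi_1^{(k)}\!\circ\!\phi_0 \;-\; D\phi_0\cdot\xi_2^{(k)} \;-\; p_k,
\end{equation}
so the induction continues if \eqref{HE} can be set to zero.

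\textbf{The regular locus.} Equation \eqref{HE}${}=0$ is a linear equation for the pair $(\xi_1^{(k)},\xi_2^{(k)})$ whose source has real dimension $4k$ (by liftability) while whose target, the space of degree-$k$ homogeneous polynomials in $z,\bar z$, has real dimension $2k+2$. Its solvability is governed by a single genericity condition on $\phi_0$: the non-vanishing of a certain maximal minor of the associated matrix, which depends polynomially on the entries of $\phi_0$. Since for each $k$ this minor is not identically zero---a fact to be verified by computing at a single convenient $\phi_0$, e.g.\ one of the normal forms from Theorem B---its vanishing locus is a proper closed set. The \emph{regular part} of the moduli space is defined as the image of the open dense subset where all these minors ($k\ge 2$) are non-zero. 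On it the induction goes through at every order and produces formal liftable power series $\hat\psi_1,\hat\psi_2\in\hat H$ conjugating the Taylor series of $\phi$ and $\tilde\phi$.

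\textbf{From formal to smooth; main obstacle.} Realize $\hat\psi_i$ as smooth germs $\psi_i\in H$ via Borel's theorem, so that $\eta:=\psi_1^{-1}\circ\tilde\phi\circ\psi_2$ agrees with $\phi$ to infinite order at the origin. A Moser time-path argument along $\phi_t=(1-t)\phi+t\eta$ then reduces the remaining conjugacy to the infinitesimal equation $\dot\phi_t=\xi_1(t)\!\circ\!\phi_t - D\phi_t\cdot\xi_2(t)$ with flat $\xi_i(t)$. Flatness makes this step easy: all potential obstructions live in the formal jet, which is zero by construction, so the equation is solvable in the flat category by a straightforward inductive construction combined with a bump-function summation. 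Integrating in $t$ yields the desired smooth $\psi_i(1)\in H$ conjugating $\phi$ to $\tilde\phi$. The main obstacle in the whole argument is the preceding paragraph: characterizing the regular locus and verifying solvability of the homological equation at every order $k\ge 2$. Everything else---the jet induction, the Borel realization, and the Moser-path reduction with flat data---is essentially routine once this step is in place.
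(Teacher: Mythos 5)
Your reduction to the orbit problem and your two-step scheme (order-by-order formal conjugation, then absorption of a flat discrepancy) are reasonable in outline, and the second step is in fact easier than your Moser argument suggests: if $\eta$ agrees with $\phi$ to infinite order, then $\eta\circ\phi^{-1}=\id+(\text{flat})=z\bigl(1+(\text{flat})/z\bigr)$ is liftable, so formal equivalence already implies smooth equivalence by part (3) of Theorem \ref{thm:thm1}; no path method is needed. The genuine gap is precisely the step you flag as the main obstacle and then leave undone: you never prove that your order-$k$ homological equation is solvable, and you never identify the locus on which it is. As written, your ``regular part'' is the complement of a countable union of unspecified proper closed sets, one for each $k\ge 2$; this does not yield the theorem, which asserts that the regular part is exactly $\{\mu\neq 0\}$ and that the single invariant $\mu$ is complete there. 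Checking that each maximal minor is ``not identically zero'' at one convenient $\phi_0$ per degree only excludes a $\mu$-set that could depend on $k$, so you would not even get an open regular locus, let alone the correct one. Note also that the relevant condition is genuinely $\mu\neq 0$ rather than abstract genericity: for $\mu=0$ the image of your order-$k$ operator misses the monomial $\bar z^k$ (the germs $z+\bar z^k$ are pairwise non-equivalent), so one must actually compute the real rank of $(\xi_1,\xi_2)\mapsto \xi_1\circ\phi_0-D\phi_0\cdot\xi_2$ for $\phi_0=z+\mu\bar z$ and show surjectivity for every $k\ge2$ and every $\mu\neq0$. Without that computation the proof is incomplete.

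For comparison, the paper bypasses the induction entirely. Lemma \ref{lemma:c1tocinfty} shows that any $\phi$ with $\mu(\phi)=\mu\neq0$ is equivalent to the linear model $z+\mu\bar z$ by solving the single functional equation $(z+\mu\bar z)\circ\psi_2=\psi_1\circ\phi$ in the smooth category: with $\psi_1=zf$, $\psi_2=zg$ and $h=f\circ\phi$ it reads $zg+\mu\bar z\bar g=\phi h$, and explicit $g,h$ with $g(0),h(0)\neq0$ are produced by writing $\phi=z+\mu\bar z+zu+\mu\bar z v$ (the division by $\mu$ is exactly where $\mu\neq0$ enters) and expanding $u,v$ in the ideal generated by $\phi$ and $\bar\phi$. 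This one-shot division argument is what your degree-by-degree scheme would have to reproduce at every order; if you want to keep your route, carrying out the rank computation for all $k$ is the missing content.
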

For general $n$  (number of pinch points on the singular fiber)  there is, apparently, a similar stabilization phenomenon. We conjecture that for each $n$ there is $k = k(n)$ such that $C^k$-invariants  allow one to distinguish generic orbits  of action \eqref{action}, so that the codimension of generic orbits (or equivalently, the number of $C^\infty$-smooth invariants) is finite (see Remark \ref{rem:npinchedstab}).
\par
It is important to note that since the underlying symplectic structure does not play any essential role in the context of smooth invariants,  one can consider a  (potentially) more general situation of toric fibrations with an isolated focus-like fiber.  (Such fibrations arise, in particular, in integrable non-Hamiltonian systems. It is shown in  \cite{ZungNonHam} that focus-like singularities are typical in this general setting, and they indeed naturally appear in integrable nonholonomic systems \cite{BolsFocus, Cushman}.) That is what we are actually doing in our paper. However, this approach naturally raises the symplectization problem: is it true that any focus-like singularity can be endowed with a suitable symplectic structure in such a way that all the fibers become Lagrangian? A positive answer is given in Section \ref{sec:symp1}. 

Finally,  by using non-triviality of smooth invariants in the focus-focus case, we disprove the conjecture stated by Zung in  \cite{AL}. 
Recall that his main result is that any non-degenerate singularity can be {\it topologically} decomposed into an almost direct product of ``elementary bricks'' of  dimension 2 or 4.   In other words,  every such singularity is {\it homeomorphic} to the quotient of a certain direct product of ``elementary bricks'' by a symplectic component-wise fibration-preseving action of a finite group. It is quite obvious that such a decomposition is in general {\it not symplectic}, i.e. no suitable symplectomorphism can be found.   However,  Zung conjectured that this decomposition is {\it smooth}.  
In Section \ref{sec:example} we construct the following counterexample:
\begin{alphabeticexample}[see Section \ref{sec:example}]
Consider a one-parametric family of double-pinched focus-focus singularities on a symplectic $4$-manifold such that the $C^1$-invariant defined in Theorem \ref{thmb} varies within the family. Multiplying the total space of this family by a circle $S^1$, one can turn it into a Lagrangian fibration on a $6$-manifold, which has a non-degenerate singularity (focus singularity of rank one with two critical circles on each singular fiber). It is easy to see that the so-obtained singularity is \textit{homeomorphic}  to the direct product of a focus-focus singularity with two pinch points and a regular circle fibration  on an annulus.  However, as we show in Section \ref{sec:example}, this singularity is not \textit{diffeomorphic} to any product of this kind. The proof is essentially based on  the interpretation of $C^1$-invariants in terms of complex structures presented in Section \ref{sec:foics}.
\end{alphabeticexample}

\medskip

{\bf Acknowledgements.}   The work of the first author was supported by the Russian Science Foundation (grant No. 17-11-01303). The second author acknowledges the hospitality of Max Planck Institute for Mathematics, Bonn, where a part of this work was done. The authors are grateful to Gleb Smirnov for fruitful discussions and to the referee of this paper for his comments and remarks.

\section{Basic definitions and facts on focus-focus singularities}\label{sec:definitions}

Consider two commuting functions $H, F$ on a symplectic $4$-manifold $(M, \omega)$.    Let $P\in M$  be a rank $0$ singular point of the moment map
$\mathcal F = (H, F): M^4 \to \mathbb R^2$, which means that   $dH(P)=0$ and $dF(P)=0$.   

Consider the linearizations  $A_{H}$, $A_{F}$ of the Hamiltonian vector fields  $\sgrad H := \omega^{-1}(dH)$, $\sgrad F:= \omega^{-1}(dF)$   at the singular point $P$.  Since  $H$ and $F$ commute and the vector fields  $\sgrad {H}$, $\sgrad {F}$ are Hamiltonian, the operators $A_{H}$ and $A_{F}$  can be understood as commuting elements of the symplectic Lie algebra  $\sp(\T_P M, \omega)$.  Recall that  $P\in M$ is called {\it non-degenerate} if the commutative subalgebra in  $\sp(\T_P M, \omega)$ generated by the operators $A_{H}$ and  $A_{F}$ is a Cartan subalgebra.  Singular points of focus-focus type  are defined by the following additional condition.

\begin{definition}
A non-degenerate singular point $P\in M$ is said to be of \textit{focus-focus type} if the corresponding Cartan subalgebra is conjugate to the subalgebra of the form   
    \begin{align*}
      \begin{pmatrix}
            a & -b &  0 &  0\\
            b &  a &  0 &  0\\
            0 &  0 & -a & -b\\
            0 &  0 &  b & -a
        \end{pmatrix}, \quad  a,b \in \R.
    \end{align*}
Here we use the standard matrix representation of $\sp(\T_P M, \omega)$ with $\omega = dp_1\wedge dq_1 +  dp_2\wedge dq_2$  and the coordinates ordered as $p_1, p_2, q_1, q_2$.
\end{definition}

Equivalently,  one can say that  for a generic linear combination $\alpha H + \beta F$, the operator  $\alpha A_{H} + \beta  A_{F}$  is diagonalizable,  its eigenvalues are distinct and form a complex quadruple $\pm a\pm\i b$. 

It is easy to see that a focus-focus is an isolated singular point of the moment map $\mathcal F=(H,F)$. 

According to Eliasson's theorem  (see \cite{Vey, Eliasson, miranda2005singular} for the general case and also \cite{Chaperon, Wacheux} for the focus-focus case),   locally every non-degenerate singularity can be reduced to a standard normal form. In the case of a focus-focus singularity we have 

\begin{proposition}\label{prop:norm}
   In a neighborhood of a non-degenerate singular point of focus-focus type, there is a symplectic coordinate system  $p_1$, $p_2$, $q_1$, $q_2$,
   in which the commuting functions  $H$ and $F$ take the following form:
    \begin{align}
        \begin{cases}\label{map}
            H = H(f_1, f_2)\\
            F = F(f_1, f_2)\\
            f_1 := p_1 q_1 + p_2 q_2\\
            f_2 := p_1 q_2 - q_1 p_2,
        \end{cases}
    \end{align}
    Moreover, the transition map $f_1, f_2 \mapsto H,F$ is non-degenerate  (i.e., a local diffeomorphism).
\end{proposition}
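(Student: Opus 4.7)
The plan is to first use the non-degeneracy and focus-focus type hypothesis to put the quadratic parts of $H$ and $F$ into the standard form spanned by $f_1, f_2$ via a linear symplectic change of coordinates, and then invoke the focus-focus case of Eliasson's smooth normal form theorem to handle the higher-order terms simultaneously.

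First I would observe that, because $P$ is of focus-focus type, there is a linear symplectic change of coordinates in $(\T_P M, \omega)$ bringing the Cartan subalgebra generated by $A_H, A_F$ into the standard matrix form given in the definition above. In these coordinates, the quadratic parts of $H$ and $F$ are real linear combinations of $f_1 = p_1 q_1 + p_2 q_2$ and $f_2 = p_1 q_2 - q_1 p_2$, since the quadratic Hamiltonians generating the standard Cartan subalgebra are precisely $f_1$ and $f_2$. Non-degeneracy of the Cartan subalgebra guarantees that the two linear combinations are linearly independent, so after post-composing the moment map $\mathcal F$ by an invertible linear transformation of $\R^2$ we may assume that the $2$-jets of $H$ and $F$ at $P$ equal $f_1$ and $f_2$ respectively.

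Next I would apply Eliasson's smooth normal form theorem in the focus-focus case, as made explicit by Chaperon \cite{Chaperon} and Wacheux \cite{Wacheux}. The substance of this theorem is to promote the above linear-level identification to a semi-global statement: there exists a germ of a smooth symplectomorphism fixing $P$ and carrying the pair $(H,F)$ into a pair of functions of $f_1, f_2$ alone. The essential mechanism is that the centralizer of $(f_1, f_2)$ in the Poisson algebra of germs consists exactly of functions of $f_1, f_2$. In the formal setting this is an easy Birkhoff/averaging argument using the joint $S^1 \times \R$ action generated by the flows of $f_1$ and $f_2$; in the smooth setting one additionally needs that a smooth germ which is formally a power series in $f_1, f_2$ is in fact a smooth function of $f_1, f_2$.

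The main technical obstacle is precisely this last step: passing from the formal normal form to a genuine smooth one. In the focus-focus case this is handled cleanly by introducing complex Darboux coordinates $z = p_1 + \i p_2, w = q_1 - \i q_2$, in which $f_1 + \i f_2 = z w$ becomes a single holomorphic function of node type. The problem then reduces to showing that a real-smooth germ at $0 \in \Complex^2$ Poisson-commuting with $zw$ and $\bar z \bar w$ is a smooth function of $zw$ and its conjugate, which in turn follows by a division argument in the style of Mather together with a Moser-type deformation to remove the higher order corrections without destroying symplecticity. Once the normal form $H = H(f_1,f_2)$, $F = F(f_1,f_2)$ is established, the final non-degeneracy claim is immediate from the initial reduction: at $P$ the $2$-jets of $H, F$ equal $f_1, f_2$, so the Jacobian of $(f_1, f_2) \mapsto (H, F)$ is the identity at the origin, and the map is therefore a local diffeomorphism.
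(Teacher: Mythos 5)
Your proposal is correct and follows the same route as the paper, which states this proposition as a known consequence of Eliasson's normal form theorem and simply cites \cite{Vey, Eliasson, miranda2005singular} (and \cite{Chaperon, Wacheux} for the focus-focus case) without giving a proof. Your additional sketch of the underlying mechanism — Williamson-type linear normalization of the quadratic parts, formal normal form via the centralizer of $(f_1,f_2)$, and the division/Moser argument to pass from formal to smooth — is an accurate summary of how those references establish the result, and your derivation of the non-degeneracy of the transition map from the normalized $2$-jets is sound.
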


For our purposes, it will be more convenient to rewrite the above formulas in complex notation.  Namely, we set 
$$
u:=p_1 - \i p_2, \  v:=q_1 + \i q_2.
$$
Then
$$
uv =  f_1(u,v) + \i f_2(u,v), \quad \omega = \mathrm{Re}\, (du\wedge dv).
$$

So, a singular point $P$ of the moment map $\mathcal F: M^4 \to \R^2$ is of focus-focus type if there exist local complex coordinates $(u,v)$ on $M^4$ (canonical in the  sense that $\mathrm{Re}\, (du\wedge dv) = \omega$) and a local complex coordinate $z$ on $\R^2$ in which $\mathcal F$ takes the form $z=uv$. (Here by complex coordinates we mean a $\Cont^\infty$-smooth, but not necessarily holomorphic, map to a complex vector space of appropriate dimension.)


These  normal coordinates immediately give us a local description of the corresponding Lagrangian fibration in a neighborhood of a focus-focus point.
\begin{proposition}\label{prop:local}
    Consider the neighborhood $U$ of a focus-focus point which is a ball in normal coordinates  {\rm(}that is $U:= \{ p_1^2 + p_2^2 + q_1^2 + q_2^2  < \varepsilon\} = \{ |u|^2 + |v|^2 < \varepsilon\}${\rm)}.  Let  $\L_\delta := \{ (u,v)\in U~|~ uv=\delta \in \mathbb C^* \}$ be the intersection of a regular fiber {\rm(}that is sufficiently close to the singular one $\L_0:=\{ (u,v)\in U~|~ uv=0\}${\rm)} with the neighborhood $U$. Then 
      \begin{enumerate}
        \item $\L_\delta$ is diffeomorphic to a cylinder.
        \item The Hamiltonian flow of  $f_2=\mathrm{Im}\, uv$ is $2\pi$-periodic {\rm(}i.e., defines a Hamiltonian $S^1$-action{\rm)}.  Every trajectory of this flow generates the first homology group  $H_1(\L_\delta, \Z)$.
        \item The singular fiber $\L_0$ is the union of two transversally intersecting discs.
    \end{enumerate}
\end{proposition}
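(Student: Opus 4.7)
The strategy is to exploit the explicit complex normal form from Proposition \ref{prop:norm}: in the coordinates $(u,v)$ the neighborhood $U$ becomes the Euclidean ball $\{|u|^2+|v|^2<\eps\}$ in $\mathbb{C}^2$, the moment map reduces to the single holomorphic function $uv$, and the symplectic form is $\mathrm{Re}\,(du\wedge dv)$. All three statements then become elementary computations on the affine plane curve $\{uv=\delta\}\subset\mathbb{C}^2$.

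For (1), I would parametrize $\L_\delta$ by the coordinate $v\in\mathbb{C}^*$ via $u=\delta/v$. The defining inequality $|u|^2+|v|^2<\eps$ becomes, on setting $t=|v|^2>0$, the quadratic condition $t^2-\eps t+|\delta|^2<0$. Provided $|\delta|<\eps/2$ (which holds for $\delta$ close enough to $0$), this quadratic has two positive real roots $0<t_-<t_+$, so $|v|$ is restricted to an open annulus $\sqrt{t_-}<|v|<\sqrt{t_+}$. Hence $\L_\delta$ is diffeomorphic to an open cylinder. For (3), the equation $uv=0$ together with $|u|^2+|v|^2<\eps$ gives exactly the two complex discs $\{u=0,\,|v|<\sqrt{\eps}\}$ and $\{v=0,\,|u|<\sqrt{\eps}\}$; since these are the two coordinate lines in $\mathbb{C}^2$, they meet only at the origin and their (real) tangent planes $\{u=0\}$ and $\{v=0\}$ together span $\mathbb{C}^2=\T_0 M$, which is precisely transverse intersection.

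For (2), I would write $f_2=\Imm{uv}$ and observe that the $S^1$-action $(u,v)\mapsto(e^{-i\alpha}u,e^{i\alpha}v)$ preserves the product $uv$ and is obviously $2\pi$-periodic; a direct check that the infinitesimal generator of this action equals the Hamiltonian vector field $\sgrad{f_2}$ (for instance, by recomputing in the real coordinates, where $f_2=p_1q_2-p_2q_1$ is the angular momentum generating simultaneous rotations of $(p_1,p_2)$ and $(q_1,q_2)$) identifies the flow. On the cylinder $\L_\delta$, restricted to the parameter $v$, this flow is the rotation $v\mapsto e^{i\alpha}v$ of the annulus in $\mathbb{C}^*$, and the orbit of any point is a circle going once around the annulus, hence a generator of $H_1(\L_\delta,\Z)\cong\Z$.

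I do not anticipate a genuinely hard step; the only subtlety is the choice of $\eps$ and the range of admissible $\delta$ that guarantee both that the ball sits inside the Eliasson chart and that the corresponding annulus $\sqrt{t_-}<|v|<\sqrt{t_+}$ is nonempty and connected, so that the single-annulus description of $\L_\delta$ is correct. Once this harmless smallness condition is recorded, the three claims are just readings of the same complex-coordinate picture.
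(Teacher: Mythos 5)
Your proof is correct, and it is exactly the elementary verification the paper has in mind: the paper states Proposition \ref{prop:local} without proof as an immediate consequence of the normal form $z=uv$, and your annulus computation (with the smallness condition $|\delta|<\varepsilon/2$), the rotation $(u,v)\mapsto(e^{-i\alpha}u,e^{i\alpha}v)$ generated by $f_2=p_1q_2-q_1p_2$, and the identification of $\L_0$ with the two coordinate discs fill in precisely the omitted details. No discrepancies with the paper's approach.
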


\begin{remark}\label{rem:uniqueness}
Notice that the functions  $f_1$ and $f_2$ can be obtained from the original commuting functions $H$ and $F$ by means of a non-degenerate change of variables (i.e., by a suitable local diffeomorphism $f_1=f_1(H,F),  f_2=f_2(H,F)$).  The first of them $f_1$ is defined  uniquely up to sign and adding a flat function, while the second one $f_2$ is defined up to sign, see \cite{SanFF}.  \par
\end{remark}
The next theorem, due to Matveev \cite{Matveev} and Zung \cite{ZungFF}, describes focus-focus singularities in the semi-local setting, i.e. in a neighborhood of the singular fiber.  
\begin{theorem}\label{th:focus}     Let $\mathcal F: M^4 \to \R^2$ be a moment map defined by two Poisson commuting functions. Let also $\L_0:=\F^{-1}(0)$  be a singular fiber that contains a focus-focus point $P$.  Assume that the singular fiber $\L_0$ is compact, and that all singular points of  $\mathcal F$ on $\L_0$ are non-degenerate of rank $0$.
    Then 
     \begin{enumerate}
        \item  All singular points on  $\L_0$ are of focus-focus type and there are finitely many of them. 
        \item The singular fiber $\L_0$ is the union of $n$ Lagrangian spheres transversally intersecting at singular points\footnote{If $n=1$, the fiber is an immersed Lagrangian sphere with one self-intersection point.}, where $n$ is the number of singular points on the fiber (see Figure~\ref{fig1}).          
        \item  A sufficiently small neighborhood $U(\L_0):=\mathcal F^{-1}(B_\delta)$ of the singular fiber $\L_0$, where $B_\delta := \{ a\in \R^2~|~ |a|<\delta\}$, contains no other singular points and admits a Hamiltonian $S^1$-action that is free everywhere except for $n$ focus-focus points which remain fixed.          
        \item  The generator of this $S^1$-action is well defined in the whole neighborhood $U(\L_0)$  and, in a neighborhood of each focus-focus point, coincides (up to sign)  with the function $f_2$  from Proposition \ref{prop:norm}. In particular, the functions $f_2(H,F)$ related to different focus-focus points coincide as functions of $H$ and $F$ (up to sign).

        \item Each non-singular fiber $\L_a := \mathcal F^{-1}(a)$, $a \in B_\delta \setminus \{0\}$, is connected and diffeomorphic to a 2-torus. 
     \end{enumerate}
\end{theorem}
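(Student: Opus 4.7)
The plan is to bootstrap the local Eliasson normal form of Proposition \ref{prop:norm} at the given focus-focus point $P\in\L_0$ into a semi-global Hamiltonian $S^1$-action on $U(\L_0)=\F^{-1}(B_\delta)$. The pivotal observation is that in the normal coordinates near $P$ one has $f_2=g_P(H,F)$ for a smooth function $g_P$ defined near $0\in\R^2$, because the map $(f_1,f_2)\mapsto(H,F)$ is a local diffeomorphism. Pulling back by $\F$, I regard $g_P\circ\F$ as a globally defined smooth function on all of $U(\L_0)$ after shrinking $\delta$. Its Hamiltonian flow is tangent to every fiber of $\F$, vanishes at rank-$0$ critical points, and is $2\pi$-periodic on the normal-form neighborhood of $P$.

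The key step is to upgrade this local periodicity to a global Hamiltonian $S^1$-action, yielding parts (3) and (4). Since $\F$ is proper, Arnold--Liouville applies on the regular locus: each regular fiber is a disjoint union of tori on which $g_P\circ\F$ generates a translation-type flow whose velocity vector is determined by the fiber alone. Hence $2\pi$-periodicity at a single point of such a torus propagates to the whole torus, and continuity in the base parameter spreads it to every regular fiber meeting $U(\L_0)$. Extending across $\L_0$ itself uses compactness and the fact that $g_P\circ\F$ is smooth at rank-$0$ critical points, where the flow degenerates to fixing them. For (4), an independent execution of the same construction at any other rank-$0$ critical point $Q\in\L_0$ yields a candidate generator $g_Q(H,F)$; the uniqueness of a primitive $2\pi$-periodic Hamiltonian direction on a torus fiber (Remark \ref{rem:uniqueness}) then forces $g_P=\pm g_Q$ as functions of $(H,F)$.

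Parts (1) and (2) now follow from analyzing the fixed points of the established $S^1$-action together with the local fiber geometry. A non-degenerate rank-$0$ singular point admits a circle generator inside its Cartan subalgebra only in the elliptic-elliptic, elliptic-hyperbolic, or focus-focus cases, so hyperbolic-hyperbolic is immediately excluded. The two remaining non-focus-focus types are ruled out by their local singular-fiber dimensions (respectively $0$ and $1$), which are incompatible with the $2$-dimensional cylindrical $S^1$-orbit structure of $\L_0\setminus\{\text{fixed points}\}$ implied by Proposition \ref{prop:local}(3); only the two-transverse-disks local model at a focus-focus point can serve as an endpoint of such a cylinder. This gives (1), and simultaneously displays $\L_0$ as a chain of $n$ topological spheres glued pairwise at focus-focus points as in Figure \ref{fig1}, with finiteness coming from isolation of focus-focus points together with compactness of $\L_0$. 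For (5), regular fibers are tori by Arnold--Liouville; their connectedness follows from the fact that each regular $S^1$-orbit is a non-contractible loop (Proposition \ref{prop:local}(2)), so that distinct components of a nearby regular fiber would have to degenerate under the $S^1$-action to distinct subsets of the connected fiber $\L_0$, a contradiction.

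The hard part will be the global extension of the $S^1$-action in the second paragraph. Although $g_P\circ\F$ is defined on all of $U(\L_0)$ by construction, certifying $2\pi$-periodicity away from the normal-form neighborhood of $P$ requires a careful interplay of Arnold--Liouville on regular fibers with the sign-matching across other critical points, and this is the genuinely semi-global ingredient absent from Proposition \ref{prop:norm}. Once it is in hand, the classification of critical types on $\L_0$, the pinched-torus topology, and the torus structure of nearby regular fibers all follow from standard arguments using symplectic reduction along the $S^1$-action.
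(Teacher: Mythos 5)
The paper does not prove this theorem at all: it is quoted as a known result of Matveev \cite{Matveev} and Zung \cite{ZungFF}, so there is no internal proof to compare yours against. Judged on its own terms, your proposal reproduces the broad strategy of those papers (local normal form $\Rightarrow$ semi-global $S^1$-action $\Rightarrow$ classification of singular points and of the fiber topology), but it defers precisely the semi-global content, and you say so yourself: ``the hard part will be the global extension of the $S^1$-action.'' A sketch that postpones exactly the hard step of a semi-global theorem is a roadmap, not a proof.

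Two concrete gaps. First, the propagation of $2\pi$-periodicity. On a single Liouville torus periodicity does propagate from one point to the whole torus (the flow is a translation flow), but ``continuity in the base parameter'' does not carry it to other fibers: membership of the section $a\mapsto 2\pi\,\diff g_P(a)$ in the period lattice is a closed, not an open, condition. What actually works is that every regular fiber $\L_a$, $a\in B_\delta\setminus\{0\}$, meets the normal-form ball around $P$ in a nonempty cylinder where periodicity is known directly --- but this only reaches the \emph{component} of $\L_a$ containing that cylinder. To reach all of $\L_a$ you need part (5) (connectedness), which you derive afterwards from the $S^1$-action; and to extend the action across the other critical points of $\L_0$ you need to know they are not hyperbolic--hyperbolic (near such a point no nonzero function of $(H,F)$ generates a periodic flow), which is part (1), which you also derive from the $S^1$-action. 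As written, (1), (3) and (5) are each proved from the others. (Your stated connectedness argument --- that distinct components ``would have to degenerate to distinct subsets of $\L_0$'' --- is not a contradiction at all: nothing forbids two components from limiting onto overlapping, or even equal, subsets of $\L_0$.) Second, part (2): even granting that all singular points are focus-focus and that $\L_0\setminus\{\text{sing.\ pts}\}$ is a union of $2$-dimensional orbits of the $\R^2$-action, one must still show each such orbit is a cylinder (not a torus forming a separate compact component) and that each of its two ends compactifies onto one of the two transverse local disks at a focus-focus point rather than accumulating onto another orbit; this uses compactness of $\L_0$ and the local model in an essential way and is absent from the proposal. Filling these two gaps is exactly the content of the cited proofs.
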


We refer to the singularity described in this theorem as a  \textit{(symplectic) focus-focus singularity of complexity $n$} (or \textit{$n$-pinched focus-focus singularity}), where $n$ is the number of focus-focus points on the singular fiber $\L_0$.


The following important classification result is also due to Matveev and Zung.
\begin{theorem} \label{homeo}
    All focus-focus singularities of the same complexity $n$ are fiberwise homeomorphic.
\end{theorem}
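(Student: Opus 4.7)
The plan is to establish the homeomorphism by decomposing $U(\mathcal L_0)$ via the Hamiltonian $S^1$-action supplied by Theorem~\ref{th:focus}(3)--(4), reducing the problem to a purely $3$-dimensional classification in the quotient, and then lifting back. The key idea is that once one divides by the $S^1$-action, only finitely many combinatorial data remain, and these are forced to agree for any two singularities of complexity $n$.

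First, I would form the orbit space $Q := U(\mathcal L_0)/S^1$. Since the $S^1$-action is free outside the $n$ focus-focus fixed points, and near each such point the action is conjugate to the standard weight $(1,1)$ rotation (that is, the flow of $f_2 = p_1q_2 - p_2q_1$, using Proposition~\ref{prop:norm} and the identification of $f_2$ with the $S^1$-generator from Theorem~\ref{th:focus}(4)), the quotient $Q$ is a topological $3$-manifold; near each focus-focus image $\R^4/S^1 \cong \R^3$ by the standard invariant theory of weighted rotations. The moment map $\mathcal F$ factors as $U(\mathcal L_0) \to Q \xrightarrow{\bar{\mathcal F}} B_\delta$. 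I would then examine $\bar{\mathcal F}$: on a regular torus fiber the $S^1$-orbit generates $H_1$ (Proposition~\ref{prop:local}(2)), so the quotient of a regular torus is a circle; on each of the $n$ Lagrangian spheres in $\mathcal L_0$ the two $S^1$-fixed points are exactly the two focus-focus endpoints of the sphere, hence the $S^1$-quotient of each sphere is an arc. Altogether $\bar{\mathcal F}^{-1}(0)$ is a cyclically closed chain of $n$ arcs meeting at $n$ vertices (the images of the focus-focus points), and $\bar{\mathcal F}$ is a circle bundle over $B_\delta \setminus \{0\}$ that degenerates to this chain over the origin.

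Second, I would prove that this structure is determined by $n$ up to fiber-preserving homeomorphism. Since $B_\delta \setminus \{0\}$ is homotopy equivalent to a circle, an $S^1$-bundle over it is classified by a single $\Z$-invariant, which can be computed from a local slice near any focus-focus vertex using the standard model $uv = \mathrm{const}$; the computation yields exactly $n$ (one unit of twist contributed by each focus-focus point, matching the well-known $\begin{pmatrix}1 & n\\ 0 & 1\end{pmatrix}$ monodromy at the full torus level). A direct cut-and-paste argument, peeling off small standard neighborhoods of the $n$ vertices and identifying what remains with the complement of $n$ arcs in a trivial $S^1$-bundle over a disk, shows that the pair $(Q,\bar{\mathcal F})$ is homeomorphic to a fixed model depending only on $n$.

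Third, I would lift the quotient homeomorphism back upstairs. Away from the focus-focus points the quotient map $U(\mathcal L_0)\setminus\{P_1,\ldots,P_n\} \to Q\setminus\{\bar P_1,\ldots,\bar P_n\}$ is a principal $S^1$-bundle; having contractible fibers of $\bar{\mathcal F}$ locally and trivial $H^2$ in the appropriate sense, this bundle is determined up to isomorphism by its local behavior at the punctures, which is pinned down by Proposition~\ref{prop:norm}. Extending across the focus-focus points using the Eliasson normal form glues the local standard models to the global $S^1$-bundle in a way that is unique up to fiberwise homeomorphism.

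The main obstacle I anticipate is the second step: rigorously verifying that the cyclic arrangement of the $n$ arcs in $\bar{\mathcal F}^{-1}(0) \subset Q$, together with their gluing to the regular $S^1$-bundle structure, admits no nontrivial topological variation. A priori one must rule out reorderings or orientation reversals of the arcs; this is handled by using the $S^1$-equivariant Eliasson normal form at each vertex, which pins down the local picture, and the uniqueness up to sign of the global $S^1$-generator from Remark~\ref{rem:uniqueness}, which ensures the cyclic chain of spheres in $\mathcal L_0$ (Theorem~\ref{th:focus}(2)) descends in a combinatorially rigid way. Once this rigidity is established, the desired fiberwise homeomorphism between any two $n$-pinched focus-focus singularities is assembled piece by piece.
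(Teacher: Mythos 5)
The paper does not prove this theorem; it is quoted from Matveev and Zung, so there is no in-paper argument to compare against. Your strategy of quotienting by the global Hamiltonian $S^1$-action, classifying the resulting $3$-dimensional fibration $\bar{\mathcal F}\colon Q\to B_\delta$, and then lifting back via the principal-bundle structure on the free part is close in spirit to Zung's approach and is workable. But two points need repair. First, the classification claim for $\bar{\mathcal F}$ over $B_\delta\setminus\{0\}$ is wrong as stated: circle bundles over a space homotopy equivalent to $S^1$ are trivial (principal case, $H^2(S^1;\Z)=0$) or at most carry a $\Z/2$ orientability invariant; there is no $\Z$-valued invariant equal to $n$ downstairs. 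The integer $n$ is the monodromy of the \emph{torus} bundle upstairs and plays no role in classifying the quotient, so this part of your second step does not do the work you assign to it.

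The more serious gap is exactly the obstacle you flag and then dismiss. The content of the theorem is that the $n$ standard vertex pieces can be glued along the $n$ edge pieces (trivial cylinder fibrations) in only one way up to fiberwise homeomorphism; equivalently, that every germ of a homeomorphism of the base $(\Complex,0)\to(\Complex,0)$ lifts to a fiber-preserving homeomorphism of the local model $uv\colon(\Complex^2,0)\to(\Complex,0)$, so that arbitrary transition maps can be absorbed. Invoking the $S^1$-equivariant Eliasson normal form at each vertex pins down only the local picture at the fixed points and says nothing about the transition data along the edges — and this is precisely the step whose smooth analogue \emph{fails} (Theorem~\ref{liftable} shows only germs divisible by $z$ or $\bar z$ are smoothly liftable, which is the source of all the invariants in this paper). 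So topological liftability must be proved, not asserted. It is in fact accessible from your quotient picture: the vertex model descends to $\{(a,b,z)\in\R_{\ge0}^2\times\Complex : ab=|z|^2\}\to\Complex$, which is a \emph{trivial} fibration via $(a,b,z)\mapsto(a-b,z)$, so every base homeomorphism lifts to the quotient; one then lifts further to the $S^1$-bundle upstairs using that the restriction map $H^2(Q\setminus\{\bar P_i\};\Z)\to\bigoplus_i H^2(S^2_i;\Z)$ to the links of the punctures is injective and that maps $S^2\to S^1$ are null-homotopic, which lets you match the standard trivializations at the fixed points. Without some such argument the proof is incomplete at its central step.
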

 The case of complexity one was studied much earlier by L.M. Lerman, and Ya.L. Umanskii, see  \cite{LermUm} and references therein. 

\medskip


Since we are going to study smooth invariants of focus-singularities,  the main part of our construction will not use explicitly any symplectic structure.  For this reason, it is natural to define focus-focus singularities as in Theorem \ref{th:focus} but in a more general context without referring to a symplectic structure. In Section \ref{sec:sympl} we will show that all such singularities can be ``symplectized''.
 
 \begin{definition}\label{def:smoothffpoint}Consider a smooth map $\mathcal F: M^4 \to N^2$ and let $P\in M^4$ be a singular point of $\mathcal F$ with $\mathcal F(P)=Q$.  We will say that $P$ is of \textit{focus-focus type (in the smooth sense)}  if locally in some suitable complex coordinate  systems ($u, v$ in a neighborhood of $P$ on $M^4$ and $z$ in a neighborhood of $Q \in \N^2$)  the map $\mathcal F$ is given as $z=uv$.  
 \end{definition}

To define a focus-focus singularity in the semilocal setting,  we consider the whole fiber $\L_Q=\mathcal F^{-1}(Q)$ containing several critical points.  First of all, we impose the following two natural assumptions:

\begin{assumption} The singular fiber  $\L_Q$ is compact. \end{assumption}

\begin{assumption} All singular points of $\mathcal F$ located on $\L_Q$ are of focus-focus type. (In particular, the number of such points is finite.) 
\end{assumption}

These two assumptions, however, are not enough to determine the topology of the singularity.   Indeed, while in a neighborhood of each focus-focus point the structure of the singularity is standard and is described by Proposition~\ref{prop:local}, these local standard singularities can be arranged together in many different ways. So we need to assume that this arrangement is the same as in Figure~\ref{fig1}:
\begin{assumption}\label{assumption3} The singular fiber $\L_Q$ is homeomorphic to the $n$-pinched torus shown in Figure~\ref{fig1}.  In other words, the complement of focus-focus points in the singular fiber $\L_Q$  is a disjoint union of $n$ cylinders.
\end{assumption}
Finally, we need the following ``orientability'' assumption:
\begin{assumption}\label{assumption4} The manifold $M^4$ is oriented and the following equivalent conditions hold. \begin{enumerate} \item All intersections between $2$-spheres constituting the singular fiber $\L_Q$ are positive\footnote{Note that these spheres can be simultaneously oriented by picking a (local) orientation of the base $N^2$. Reversal of orientation of $N^2$ changes orientations of all spheres, so the intersection numbers are well-defined.}.
\item For any focus-focus points  $P \in \L_Q$, the orientation induced on $M^4$ by the local volume form $\mathrm{Re}\, (du\wedge dv) \wedge \mathrm{Re}\, (du\wedge dv)$, where $u, v$ are normal coordinates near $P$ (see Definition~\ref{def:smoothffpoint}), is positive. 
\end{enumerate}
\end{assumption}
 
 \begin{remark}
These two conditions are equivalent because the  fiber $\L_Q$ is given, in local normal coordinates, by two discs $\{u= 0\} $, $\{v = 0\}$ whose intersection is positive. Also note that this positivity condition imposes strong restrictions on the intersection form of $M^4$, see the paper \cite{GSmirnov} devoted to topology of multi-pinched focus-focus singularites.
 \end{remark}

\begin{definition}\label{def:focussmooth}
A  singularity satisfying Assumptions 1-4  will be called a {\it focus-focus} singularity {\it in the smooth sense}, or a \textit{smooth focus-focus singularity}.

\end{definition}

\begin{remark}
Note that in integrable nonholonomic systems Assumption \ref{assumption4} does not need to hold.  This phenomenon can be  understood as a topological obstruction to Hamiltonization of such systems, see \cite{Cushman}. Nevertheless, we believe that our classification is still valid for singularities not satisfying Assumption \ref{assumption4}. In this case, one regards the ``signs'' of focus-focus points (i.e., the signs of intersections  between $2$-spheres constituting the singular fiber) as additional discrete invariants.
\end{remark}

\section{Smooth structures on focus-focus singularities}\label{sec:smooth}
In what follows, given any smooth manifolds $M$ and $N$ and any point $P \in M$,  the notation $\Cont^\infty_P(M, N)$  stands for the space of germs at $P$ of smooth maps from $M$ to $N$, while $\Diff_P(M)$ is the group (under composition) of germs at $P$ of local diffeomorphisms of $M$ fixing $P$. Even if the manifolds $M$ and $N$ are complex, we assume that all maps are only infinitely real-differentiable, but not necessarily holomorphic. We also write $\F \colon (M,L) \to (N,Q)$ when $\F$ is a germ at  a submanifold $L \subset M$ of a map $M \to N$ taking $L$ to the point $Q \in N$.

\subsection{The group of liftable diffeomorphisms}
In this section we define the group of so-called \textit{liftable diffeomorphisms}, which, roughly speaking, determine possible way to ``shuffle'' the fibers near a focus-focus singular point. The sturcture of this group (in particular the fact that not all diffeomorphisms are liftable) underlies our construction of smooth invariants.

Let $\F \colon M^4 \to N^2$ be a map from a $4$-manifold $M^4$ to a surface $N^2$. Assume that $\F$ has a focus-focus singular point at $P \in M^4$. According to Definition \ref{def:smoothffpoint}, this means that there exist complex coordinates $(u,v)$ centered at $P$ and a complex coordinate $z$ centered at $Q := \F(P)$ such that in these coordinates the map $\F$ takes the form $z = uv$. In other words, there exist germs of diffeomorphisms $\Phi \colon (M^4, P) \to (\Complex^2, 0)$ and $\phi \colon (N^2, Q) \to (\Complex, 0)$ such that the following diagram commutes
\begin{equation}\label{comDiag}
\centering
\begin{tikzcd}
  (M^4, P) \arrow{r}[]{\Phi}  \arrow{d}{\F}&  (\Complex^2, 0)    \arrow{d}[]{uv}\\
(N^2, Q) \arrow{r}{\phi}& (\Complex, 0).
\end{tikzcd}
\end{equation}
We refer to $\phi$ as a \textit{normal chart}.
Such a { chart} is not unique. The collection of all normal charts is an intrinsic property of a focus-focus singular point. We describe this collection by means of so-called \textit{liftable germs}. Given two normal charts $\phi,  \tilde \phi \colon (N^2, Q) \to (\Complex, 0)$, we get the following diagram:
\begin{equation}\label{diag2}
\centering
\begin{tikzcd}
 (\Complex^2, 0)  \arrow[bend left, dashed, <-]{rr}[]{\tilde \Phi\, \circ\, \Phi^{-1}}    \arrow{d}[]{uv}&  (M^4, P) \arrow{r}[]{\Phi}  \arrow{l}[swap]{\tilde \Phi}  \arrow{d}{\F}&  (\Complex^2, 0)    \arrow{d}[]{uv}\\
 (\Complex, 0)  \arrow[bend right, dashed, <-]{rr}[swap]{\tilde \phi\, \circ\, \phi^{-1}}& (N^2, Q) \arrow{r}{\phi} \arrow{l}[swap]{\tilde \phi}& (\Complex, 0).
\end{tikzcd}
\end{equation}
From this diagram we conclude that the transition map $\tilde \phi \circ  \phi^{-1}  \in \Diff_0(\Complex)$ between the normal charts $\phi$ and $\tilde \phi$ admits a lift $\tilde \Phi \circ \Phi^{-1}$ to the total space of the fibration $uv \colon (\Complex^2, 0) \to (\Complex, 0)$. We call such germs \textit{liftable}: \begin{definition}\label{def:liftable}
A germ of a diffeomorphism $\psi \in \Diff_0(\Complex)$ is called \textit{liftable} if there exists a germ of a diffeomorphism $\Psi \in \Diff_0(\Complex^2)$ such that the following diagram commutes:
\begin{equation}
\centering
\begin{tikzcd}\label{diag3}
 (\Complex^2, 0)  \arrow{r}[]{\Psi}  \arrow{d}{uv}&  (\Complex^2, 0)    \arrow{d}[]{uv}\\
 (\Complex, 0)   \arrow{r}{\psi} & (\Complex, 0).
\end{tikzcd}
\end{equation}
\end{definition}
liftable germs form a subgroup of the group $\Diff_0(\Complex)$. We denote this subgroup by ${\ADiff_0(\Complex)}$.  From diagram~\eqref{diag2} we get the following result.
\begin{proposition}\label{charOfliftable}
Let $\phi \colon (N^2, Q) \to (\Complex, 0)$ be a normal chart, and let $\psi \in \ADiff_0(\Complex)$ be a liftable germ. Then $\psi\circ \phi   \colon (N^2, Q) \to (\Complex, 0)$ is also a normal chart. Conversely, for any two normal charts $\phi, \tilde \phi \colon (N^2, Q) \to (\Complex, 0)$  the corresponding transition map $\tilde \phi \circ \phi^{-1} \in \Diff_0(\Complex)$ is liftable.\par
In other words, the collection of normal charts $(N^2, Q) \to (\Complex, 0)$ is a principal homogeneous space relative to the left action of $\ADiff_0(\Complex)$.
\end{proposition}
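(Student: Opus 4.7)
The plan is a direct diagram chase using the commutative squares \eqref{comDiag} and \eqref{diag3}. I would verify the forward implication, the converse implication, and then assemble them into the principal homogeneous space statement. No substantial obstacle is expected, so the steps are essentially mechanical.

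For the forward direction, I would take a lift $\Phi$ of the normal chart $\phi$ together with a lift $\Psi$ of the liftable germ $\psi$, and propose the composition $\Psi \circ \Phi$ as a lift exhibiting $\psi \circ \phi$ as a normal chart; the verification reduces to chaining $uv \circ \Psi = \psi \circ uv$ and $uv \circ \Phi = \phi \circ \F$ to obtain
\[
uv \circ (\Psi \circ \Phi) = \psi \circ (uv \circ \Phi) = (\psi \circ \phi) \circ \F.
\]
For the converse, given two normal charts $\phi, \tilde\phi$ with lifts $\Phi, \tilde\Phi$, I would set $\Psi := \tilde\Phi \circ \Phi^{-1}$ and show that it lifts the transition map $\tilde\phi \circ \phi^{-1}$; the key identity is $\F \circ \Phi^{-1} = \phi^{-1} \circ uv$, a rearrangement of $\phi \circ \F = uv \circ \Phi$, whose substitution into $uv \circ \tilde\Phi \circ \Phi^{-1}$ yields
\[
uv \circ \Psi = uv \circ \tilde\Phi \circ \Phi^{-1} = \tilde\phi \circ \F \circ \Phi^{-1} = (\tilde\phi \circ \phi^{-1}) \circ uv,
\]
as required.

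The principal homogeneous space assertion then follows immediately: nonemptiness of the set of normal charts is part of Definition~\ref{def:smoothffpoint} of a focus-focus point; the forward direction provides a well-defined left action of $\ADiff_0(\Complex)$ by composition; the converse direction provides transitivity; and freeness is automatic, since $\psi \circ \phi = \phi$ with $\phi$ an invertible germ forces $\psi = \id$. The only point of potential confusion is ensuring, in the converse computation, that $\F \circ \Phi^{-1} = \phi^{-1} \circ uv$ is applied on the correct side of $\Phi^{-1}$, but this is a bookkeeping matter rather than a conceptual difficulty.
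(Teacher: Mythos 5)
Your argument is correct and is exactly the diagram chase the paper itself performs via diagram~\eqref{diag2}: composing lifts for the forward direction and taking $\tilde\Phi\circ\Phi^{-1}$ as the lift of the transition map for the converse. The explicit verification of the identities and the freeness observation are fine; no gap.
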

The following result classifies liftable germs.
\begin{theorem}\label{liftable}
    A germ of a diffeomorphism $\psi \in \Diff_0(\Complex)$ is liftable if and only if it can be written either as
        $$\psi (z)=z  h(z),$$or as 
                 $$\psi (z)=\bar{z}   h(z),$$ where, in both cases, $h \in  \Cont^\infty_0(\Complex , \Complex)$ is a germ at $0$ of an  infinitely real-differentiable function $\Complex \to \Complex$  with $h(0) \neq 0$. 
  \end{theorem}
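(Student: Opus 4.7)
The plan is to prove the two implications separately, with the ``if'' direction a direct construction and the ``only if'' direction an analysis of how a lift $\Psi$ must interact with the singular fiber $\{uv=0\}$.

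For the ``if'' direction, I will exhibit explicit lifts. Given $\psi(z) = zh(z)$ with $h(0)\neq 0$, the germ $\Psi(u,v) := (u\,h(uv),\,v)$ satisfies $uv\circ\Psi = uv\cdot h(uv) = \psi(uv)$, and its differential at $0$ is the invertible map $(u,v)\mapsto(h(0)u,v)$, so $\Psi\in\Diff_0(\Complex^2)$. Given $\psi(z) = \bar z h(z)$, the analogous lift $\Psi(u,v) := (\bar v\,h(uv),\,\bar u)$ satisfies $uv\circ\Psi = \overline{uv}\,h(uv) = \psi(uv)$ and is again a diffeomorphism germ.

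For the ``only if'' direction, start with an arbitrary lift $\Psi\in\Diff_0(\Complex^2)$ of $\psi$. Since $\psi^{-1}(0) = \{0\}$ as germs, the relation $uv\circ\Psi = \psi\circ uv$ forces $\Psi$ to preserve the germ of $\{uv=0\} = \{u=0\}\cup\{v=0\}$. The two coordinate planes are the only smooth $2$-dimensional submanifold germs through $0$ contained in this union, so $\Psi$ either preserves each or swaps them. After possibly replacing $\Psi$ by $\Psi\circ\tau$, where $\tau(u,v):=(v,u)$ is itself a lift of the identity (since $uv\circ\tau = uv$), one may assume that each plane is preserved. Hadamard's lemma applied in the real coordinates underlying $u$ and $v$ separately then gives smooth germs $\alpha,\beta,\gamma,\delta$ with $\Psi_1 = u\alpha+\bar u\beta$ and $\Psi_2 = v\gamma+\bar v\delta$, and the condition $\Psi_1\Psi_2 = \psi(uv)$ becomes
\[
uv\,\alpha\gamma \;+\; u\bar v\,\alpha\delta \;+\; \bar u v\,\beta\gamma \;+\; \bar u\bar v\,\beta\delta \;=\; \psi(uv).
\]
The right-hand side depends only on $uv$ and $\overline{uv} = \bar u\bar v$, i.e.\ involves only monomials $u^jv^j\bar u^k\bar v^k$, so the ``cross'' contributions $u\bar v$ and $\bar u v$ on the left must cancel at every order. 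Matching quadratic terms splits the problem into two sub-cases: either $\beta(0) = \delta(0) = 0$ with $\alpha(0),\gamma(0)\neq 0$ (the $\Complex$-linear sub-case), or the mirror $\Complex$-antilinear situation.

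The main work is to propagate the first-order vanishing to all orders by induction on the jet: at each order the cancellation of the cross terms in the displayed identity forces the next jet of $\beta$ and $\delta$ to vanish in the linear sub-case, and a successive application of Hadamard's lemma promotes these formal cancellations to smooth divisibility, ultimately putting $\Psi$ in the normalized form $\Psi_1 = u\tilde\alpha$, $\Psi_2 = v\tilde\gamma$ with $\tilde\alpha,\tilde\gamma$ smooth and nonzero at $0$. Then $\psi(uv) = uv\cdot\tilde\alpha\tilde\gamma$, and since the left-hand side depends only on $uv$ and $\bar u\bar v$ one concludes $\tilde\alpha\tilde\gamma = h(uv)$ for a smooth germ $h$ with $h(0)\neq 0$, giving $\psi(z) = zh(z)$. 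The antilinear sub-case is completely symmetric and yields $\psi(z) = \bar z h(z)$. The most delicate step is the passage from order-by-order formal cancellation of the cross terms to genuine smooth divisibility of $\beta,\delta$ by $u,\bar u$; this uses the diffeomorphism property of $\Psi$ (or, equivalently, its modification within its orbit under smooth lifts of the identity) beyond what the formal jet structure alone provides.
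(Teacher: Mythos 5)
Your ``if'' direction is correct and is essentially the paper's (explicit lifts $(u\,h(uv),v)$ and $(\bar v\,h(uv),\bar u)$). The ``only if'' direction, however, contains two genuine gaps. First, the inductive step is not justified: in the identity $uv\,\alpha\gamma+u\bar v\,\alpha\delta+\bar uv\,\beta\gamma+\bar u\bar v\,\beta\delta=\psi(uv)$ the four products do \emph{not} have disjoint monomial supports, because $\alpha,\beta,\gamma,\delta$ depend on all of $u,\bar u,v,\bar v$; for instance the monomial $uv\bar v$ arises both from $uv\,\alpha\gamma$ (via a $\bar v$-term of $\alpha\gamma$) and from $u\bar v\,\alpha\delta$ (via a $v$-term of $\alpha\delta$). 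Matching coefficients therefore mixes all four terms, and in fact the higher jets of $\beta$ and $\delta$ need \emph{not} vanish: already at the next order one only obtains $\beta_1=Bu$ and $\delta_1=Dv$, which says merely that $\bar u\beta$ and $\bar v\delta$ remain divisible by $u$, resp.\ $v$. Moreover the Hadamard decomposition $\Psi_1=u\alpha+\bar u\beta$ is not unique, so ``the jet of $\beta$'' is not even a well-defined invariant of $\Psi$; the statement you can hope to prove is that the Taylor series of $\Psi_1$ is divisible by $u$ and that of $\Psi_2$ by $v$, and your sketch does not establish this. Second, even granting the formal conclusion, the passage to genuine smooth divisibility of $\Psi_1$ by the complex coordinate $u$ on $\Complex^2$ is a division statement in four real variables; it is precisely the step you flag as ``most delicate'' and leave unproved, and it does not follow from the one-variable fact recorded in Remark~\ref{rem:adm}.

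The paper's proof sidesteps both difficulties by never attempting to normalize $\Psi$: it works only with the Taylor series of $\psi$. From $\psi_\infty(uv,\bar u\bar v)=f_\infty\cdot g_\infty$ it first deduces, using that a quadratic form of rank greater than $2$ cannot factor into linear forms, that the quadratic part is a nonzero multiple of $uv$ and hence that the linear part of $g_\infty$ is a nonzero multiple of (say) $v$; it then restricts the entire identity to $\{u=0\}$ and compares lowest-degree homogeneous parts to force $\psi_\infty(0,\bar z)\equiv 0$, i.e.\ formal divisibility of $\psi_\infty$ by $z$, after which only the one-variable division fact of Remark~\ref{rem:adm} is needed. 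If you wish to salvage your route, the restriction to $\{u=0\}$ (or $\{v=0\}$) is the missing idea that makes the cross-term bookkeeping actually close, and transferring the divisibility claim from $\Psi$ to $\psi$ is what lets you avoid the several-variable division theorem altogether.
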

  \begin{remark}\label{rem:adm}Note that an infinitely real-differentiable function $\psi \colon \Complex \to \Complex$ is divisible by $z$ (or $\bar z$)  if and only if {its Taylor series} at $0$ in terms of $z, \bar z$ is divisible by $z$ (respectively, $\bar z$).
    So,  Theorem \ref{liftable} can be reformulated as follows: $\psi \in \Diff_0(\Complex)$ is liftable if and only if {its Taylor series} at $0$ is either divisible by $z$, or divisible by $\bar z$. (Equivalently, the Taylor series of $\psi$ either does not contain monomials of the form $\bar z^k$, or does not contain monomials of the form $ z^k$.)

  \end{remark}
\begin{proof}[Proof of Theorem \ref{liftable}]
First note that the complex conjugation map $\psi \colon z \to \bar z$ is liftable: as its lift $\Psi$ making diagram \eqref{diag3} commute, one can take $\Psi(u,v) := (\bar u, \bar v)$. So, it suffices to show that an orientation-preserving diffeomorphism $\psi \in \Diff_0(\Complex)$ is liftable if and only if it can be written as $\psi (z)=z  h(z)$.\par

Assume that $\psi(z) = zh(z)$. Then diagram \eqref{diag3} commutes, for instance, for  $\Psi(u,v) := (u, vh(uv))$, so $\psi$ is liftable.\par
  Conversely, assume that $\psi$ is orientation-preserving and liftable. According to Remark~\ref{rem:adm}, it suffices to show that the Taylor series $\psi_\infty  \in \Complex[[z ,\bar z]]$ of $\psi$ at $0$ is divisible by $z$. Let $\Psi$ be a lift of $\psi$ making diagram \eqref{diag3} commute. Then the Taylor series $f_\infty,g_\infty \in \Complex[[u,\bar u, v, \bar v]]$ of components of $\Psi$ satisfy
 \begin{equation}\label{prod}
  \psi_\infty(uv, \bar u\bar v) = f_\infty(u,\bar u, v, \bar v)\cdot g_\infty(u,\bar u, v, \bar v).
\end{equation}
Equating the lowest-degree terms on both sides, we get that the quadratic part $a uv + b \bar u\bar v$ of $ \psi_\infty(uv,\bar u\bar v) $ is the product of linear parts of $f_\infty$ and $g_\infty$. But since a quadratic form  can only be factored into linear forms when its rank is at most $ 2$, it follows that either $a = 0$ or $b = 0$. Also taking into account that $\psi$ is an orientation-preserving diffeomorphism, we conclude that $a \neq 0$ and $b = 0$, i.e. the quadratic part of $ \psi_\infty(uv,\bar u\bar v) $ is a non-zero multiple of $uv$. But then it follows that the linear part of $g_\infty$ is a non-zero multiple of either $u$ or $v$. Without loss of generality we can assume that it is $v$, i.e. $g_\infty = cv + \dots$, where $c$ is a non-zero constant. Then, setting $u$ in equation \eqref{prod} to zero, we get
 \begin{equation}\label{prod2}
  \psi_\infty(0, \bar u\bar v) = f_\infty(0,\bar u, v, \bar v)\cdot ( cv +\dots).
\end{equation}
Assume that the left-hand side does not vanish. Then, equating the lowest-degree terms on both sides, we get that a certain non-zero polynomial of $\bar u, \bar v$ is divisible by $v$. Since this is not possible, it follows that $ \psi_\infty(0, \bar u\bar v) $ is identically zero, and hence $ \psi_\infty(z, \bar z)$ is divisible by $z$, as desired.
\end{proof}
\subsection{Gluing maps and the main classification theorem}\label{sec:gluingMaps}
In this section we apply the above description of liftable diffeomorphisms to give an algebraic description for the space of $n$-pinched focus-focus singularities up to smooth equivalence.\par
Assume that the fiber of $\F \colon M^4 \to N^2$ over $Q \in N^2$ contains $n$ focus-focus points $P_1, \dots, P_n$ (in the smooth sense). For each of those points, choose a normal chart $\phi_i \colon (N^2, Q) \to (\Complex, 0)$.
\begin{definition}
The germs of diffeomorphisms $\phi_{i,j} \in  \Diff_0(\Complex)$ given by \begin{align}\label{gluingMapFormula}\phi_{i,j} := \phi_i \circ \phi_j^{-1}\end{align} are called \textit{gluing maps} of a focus-focus singularity (relative to the normal charts $\phi_1, \dots, \phi_n$).
\end{definition}
\begin{remark}\label{rem:cocycle}
Gluing maps satisfy the conditions
\begin{equation}\label{cocycle}
\phi_{i,i} = \id, \quad \phi_{i,j}\circ\phi_{j,k}\circ\phi_{k,i}= \id
\end{equation}
 and hence are uniquely determined by the choice of, say, $\phi_{1,2}, \phi_{1,3}, \dots, \phi_{1, n}$. Note that while $\phi_{1,2}, \phi_{2,3}, \dots, \phi_{n-1,n}$ seems to be a more natural choice, it is less convenient for computations.
 \end{remark}
 Possible non-triviality of gluing maps is the source of smooth invariants for focus-focus singularities. Note that although gluing maps depend on the choice of normal charts, they are well-defined up to the left-right action of liftable diffeomorphisms. Hence one can identify smooth structures on focus-focus singularities with the corresponding quotient space.
 \begin{remark}
Here and in what follows, we assume that singular points of a focus-focus singularity are labeled, in cyclic order, with integers $\{1, \dots, n\}$, and all diffeomorphisms are required to preserve this labelling. Hence, all invariants we construct are invariants of ``labeled focus-focus singularities''. To obtain invariants of unlabelled singularities, one should take into account the action of the ``relabelling'' group, isomorphic to the dihedral group $ D_n$.
\end{remark}
\begin{theorem}\label{thm:thm1}
\begin{enumerate} \item
Two smooth focus-focus singularities with the same number of singular points are diffeomorphic if and only if the corresponding gluing maps are related by
 \begin{equation}\label{transformationRule}
 \tilde \phi_{i,j} = \psi_i\circ \phi_{i,j} \circ \psi_j^{-1},
\end{equation}
where $\psi_1, \dots, \psi_n \in \ADiff_0(\Complex)$ are liftable.
\item  Smooth structures on an $n$-pinched focus-focus singularity are in one-to-one correspondence with orbits of the $\ADiff_0(\Complex)^n$ action on $\Diff_0(\Complex)^{n-1}$ given by
 \begin{equation}\label{gaugeAction}
(\psi_1, \dots, \psi_n) \acts (\phi_{1,2},  \dots, \phi_{1,n}) :=  (\psi_1\circ\phi_{1,2}\circ\psi_2^{-1},  \dots, \psi_{1}\circ\phi_{1, n}\circ\psi_{n}^{-1}),
\end{equation}
where 
$\psi_1, \dots, \psi_n\in \ADiff_0(\Complex)$
and 
$\phi_{1,2},  \dots, \phi_{1,n} \in \Diff_0(\Complex)$. 
\item Equivalently, smooth structures on an $n$-pinched focus-focus singularity are in one-to-one correspondence with orbits of the $\ADiff^\infty_0(\Complex)^n$ action on $\Diff^\infty_0(\Complex)^{n-1}$ given by the same formula \eqref{gaugeAction}. Here $\Diff^\infty_0(\Complex)$ is the group of $\infty$-jets at $0$ of local diffeomorphisms $(\Complex, 0) \to (\Complex, 0)$, and $\ADiff_0^\infty(\Complex)$ is the subgroup of liftable $\infty$-jets, i.e., series divisible by $z$ or $\bar z$.
\end{enumerate}
\end{theorem}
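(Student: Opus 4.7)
The ``only if'' direction of Part~1 is direct. A fiberwise diffeomorphism $\Theta : M_1 \to M_2$ induces a germ of a base diffeomorphism $\theta: (N_1, Q_1) \to (N_2, Q_2)$ and a bijection of pinch points (which we take to preserve labels). The composition $\phi_i^{(1)} \circ \theta^{-1}: (N_2, Q_2) \to (\Complex, 0)$ is a normal chart for $\F_2$ at $P_i^{(2)}$, as witnessed by $\Phi := \Phi_i^{(1)} \circ \Theta^{-1}$ in diagram~\eqref{comDiag}. Proposition~\ref{charOfliftable} then supplies a liftable germ $\psi_i$ with $\tilde\phi_i^{(2)} = \psi_i \circ \phi_i^{(1)} \circ \theta^{-1}$, and the transformation rule~\eqref{transformationRule} follows by substituting into $\tilde\phi_{i,j} = \tilde\phi_i^{(2)} \circ (\tilde\phi_j^{(2)})^{-1}$.

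For the ``if'' direction, Proposition~\ref{charOfliftable} lets us replace each $\tilde\phi_i^{(2)}$ by $\psi_i^{-1} \circ \tilde\phi_i^{(2)}$, reducing to the case $\tilde\phi_{i,j} = \phi_{i,j}$ for all $i,j$. In this normalization the composite $\theta := (\tilde\phi_i^{(2)})^{-1} \circ \phi_i^{(1)}$ is independent of $i$ and furnishes a germ of a base diffeomorphism $(N_1, Q_1) \to (N_2, Q_2)$. Near each pinch point $P_i$, the chosen normal coordinate charts $\Phi_i^{(1)}, \Phi_i^{(2)}$ combine into a local fiberwise diffeomorphism $\Theta_i := (\Phi_i^{(2)})^{-1} \circ \Phi_i^{(1)}$ covering $\theta$.

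The main obstacle is to patch the $\Theta_i$ into a single fiberwise diffeomorphism on an entire neighborhood of the singular fiber. By Assumption~\ref{assumption3}, the complement of the pinch points in $\L_Q$ consists of $n$ open cylinders, and a tubular neighborhood of each in $M_k$ (restricted to a small enough base disk) carries the structure of a trivial regular fibration by Ehresmann's theorem. Restricting $\Theta_i$ and $\Theta_{i+1}$ to collars away from their respective pinch points produces two local trivializations of the same such bundle, both covering $\theta$; their difference is a vertical automorphism which can be isotoped to the identity through vertical automorphisms. A partition-of-unity interpolation of the resulting isotopy along each cylinder then yields the required global fiberwise diffeomorphism.

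Part~2 is a tautological restatement of Part~1 as the orbit space of the action~\eqref{gaugeAction}. For Part~3, the truncation map from the smooth moduli to the $\infty$-jet moduli is surjective by Borel's theorem, which realizes any formal tuple of germs as the Taylor expansion of a smooth tuple. Injectivity rests on the observation that a smooth diffeomorphism of the form $z \mapsto z + f(z)$ with $f$ flat at $0$ is automatically liftable: the quotient $f(z)/z$ is smooth and vanishes at $0$, so the map takes the form $z \cdot (1 + f(z)/z) = z h(z)$ with $h(0) = 1$ of Theorem~\ref{liftable}. Hence, given two smooth tuples equivalent via formal $\psi_i^\infty$, Borel-lifting each to a smooth liftable $\psi_i$ leaves only flat discrepancies between $\tilde\phi_{1,j}$ and $\psi_1 \circ \phi_{1,j} \circ \psi_j^{-1}$, which can be absorbed into the $\psi_j$'s on the right by conjugation — an operation that preserves both flatness and liftability.
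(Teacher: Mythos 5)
Your treatment of Part 1 and Part 3 follows essentially the same route as the paper: the ``only if'' direction by pushing normal charts forward along the base diffeomorphism and invoking Proposition~\ref{charOfliftable}; the ``if'' direction by normalizing so that the gluing maps literally coincide, observing that $(\tilde\phi_i)^{-1}\circ\phi_i$ is then independent of $i$, and patching the local models $(\Phi_i^{(2)})^{-1}\circ\Phi_i^{(1)}$ across the $n$ cylinders; and Part 3 via Borel surjectivity plus the fact that identity-plus-flat germs are liftable (your right-absorption of the flat discrepancies into the $\psi_j$, $j\ge 2$, is the correct way to handle the shared $\psi_1$). Two remarks on the patching step: you should say where Assumption~\ref{assumption4} enters --- it is what guarantees that the vertical automorphism comparing $\Theta_i$ and $\Theta_{i+1}$ on a collar is orientation-preserving and hence isotopic to the identity --- and one may need to compose some $\Phi_i$ with $(u,v)\mapsto(v,u)$ so that the two ends of each cylinder are matched consistently. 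These are at the same level of detail as the paper's own sketch.

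The genuine gap is in Part 2, which is \emph{not} a tautological restatement of Part 1. Part 1 only shows that the assignment $\{\text{diffeomorphism classes of } n\text{-pinched singularities}\} \to \Diff_0(\Complex)^{n-1}/\ADiff_0(\Complex)^n$ is well defined and injective. To get a \emph{one-to-one correspondence} with the full orbit space you must also prove surjectivity: every tuple $(\phi_{1,2},\dots,\phi_{1,n})\in\Diff_0(\Complex)^{n-1}$ (equivalently, every cocycle $\{\phi_{i,j}\}$ satisfying~\eqref{cocycle}) is realized as the tuple of gluing maps of some smooth focus-focus singularity satisfying Assumptions 1--4. The paper supplies this by an explicit construction: take $n$ standard local models $uv\colon(\Complex^2,0)\to(\Complex,0)$ and glue neighborhoods of the boundary cylinders of consecutive models according to the prescribed $\phi_{i,i+1}$, matching orientations so that Assumption~\ref{assumption4} holds. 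Without this realization step the claimed bijection with \emph{all} orbits is unproved; you should add it.
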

\begin{remark}
The last statement of the theorem can be interpreted as follows: Two smooth focus-focus singularities are $\Cont^\infty$-equivalent if and only if they are formally equivalent.
\end{remark}
The proof of Theorem \ref{thm:thm1} is based on the following lemma.
\begin{lemma}\label{gluingMapsDetermineTheSingulatiry}
Assume that we are given two $n$-pinched focus-focus singularities $\F \colon (M^4, L) \to (N^2, Q)$ and $\tilde \F \colon (\tilde M^4, \tilde L) \to (\tilde N^2, \tilde Q)$ such that for suitable choice of normal charts the corresponding gluing maps coincide (i.e. if  $\phi_{i,j}$'s are gluing maps for $\F$, and $\tilde \phi_{i,j}$'s are gluing maps for $\tilde \F$, then $\phi_{i,j} = \tilde \phi_{i,j}$). Then these singularities are diffeomorphic.
\end{lemma}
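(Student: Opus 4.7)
The proof will construct the required fiber-preserving diffeomorphism in three stages: identify the bases, then the neighborhoods of the pinch points, and finally extend across the regular part of the fibration. For the base I would set
\[
\eta := \tilde\phi_1^{-1}\circ\phi_1 \colon (N^2,Q)\to(\tilde N^2,\tilde Q).
\]
The assumption $\phi_{i,j}=\tilde\phi_{i,j}$ together with the defining formula \eqref{gluingMapFormula} and the cocycle relation \eqref{cocycle} yields immediately that $\tilde\phi_i\circ\eta=\phi_i$ for every $i=1,\dots,n$. Thus, after identifying the bases via $\eta$, the two singularities share the \emph{same} normal chart at every pinch point.

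Next, for each $i$ I would pick normal coordinates $\Phi_i\colon(M^4,P_i)\to(\Complex^2,0)$ making diagram \eqref{comDiag} commute for $\F$ and $\phi_i$, and similarly $\tilde\Phi_i$ for $\tilde\F$ and $\tilde\phi_i$. The composite
\[
\Xi_i := \tilde\Phi_i^{-1}\circ\Phi_i \colon (M^4,P_i)\to(\tilde M^4,\tilde P_i)
\]
is then a fiber-preserving diffeomorphism germ covering $\eta$, by commutativity of \eqref{comDiag} combined with $\tilde\phi_i\circ\eta=\phi_i$. Using each $\Xi_i$ to identify a small neighborhood of $P_i$ with the corresponding neighborhood of $\tilde P_i$, I may assume without loss of generality that $(M^4,\F)=(\tilde M^4,\tilde\F)$ and $\eta=\id$ on the union $V$ of these neighborhoods.

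It remains to extend this local identification to a global fiber-preserving diffeomorphism on a whole neighborhood of $L$. By Assumption \ref{assumption3} the complement of the pinch points in $L$ consists of $n$ open cylinders, and away from the pinch points $\F$ is a submersion, so its restriction to the preimage of a punctured neighborhood of $Q$ is a locally trivial $T^2$-bundle by Ehresmann's theorem. I would extend the identification across each cylindrical piece by selecting smooth vector fields on $M^4$ and $\tilde M^4$ transverse to the fibers, arranging that they agree near the endpoints of the cylinder via the fixed local data on $V$, and integrating their flows to produce a fiber-preserving isotopy from the identity to the desired gluing.

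The main obstacle will be this last step: one must verify that the interpolations produced on the individual cylindrical regions and in their four-dimensional neighborhoods can be assembled into a single smooth diffeomorphism rather than merely a smooth fiber-preserving map, and that the resulting map matches the prescribed normal coordinates near every pinch point. Assumption \ref{assumption3} plays an essential role here, since the combinatorial structure it imposes on $L$ prevents any monodromy obstruction to such an assembly.
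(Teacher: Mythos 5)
Your argument follows the paper's proof almost verbatim in its first two stages: the base identification $\eta=\tilde\phi_1^{-1}\circ\phi_1$, the computation $\tilde\phi_i\circ\eta=\tilde\phi_{i,1}\circ\phi_1=\phi_{i,1}\circ\phi_1=\phi_i$ showing the base map is independent of $i$, and the local lifts $\Xi_i=\tilde\Phi_i^{-1}\circ\Phi_i$ are exactly what the paper does. The extension across the cylindrical pieces is also the same idea (a flow/Moser-type interpolation), and the paper's version of it runs as follows: the local maps $\Xi_i$ determine $\infty$-jets of diffeomorphisms of the fibered cylinders along their boundaries; one extends these jets to genuine diffeomorphisms near the boundary, connects them to the identity through a path of diffeomorphisms, reads that path as the flow of a time-dependent vector field near the boundary, globalizes the vector field with a partition of unity, and integrates. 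Two small corrections to your sketch. First, the obstruction to extending the boundary data across a cylinder is killed by Assumption~\ref{assumption4} (the orientation/positivity hypothesis), not by Assumption~\ref{assumption3}: the latter only guarantees that the complement of the pinch points decomposes into cylinders at all, while it is the orientation compatibility of the two boundary identifications of each cylinder that makes them joinable by an isotopy (the paper also notes one may need to precompose some $\Phi_i$ with $(u,v)\mapsto(v,u)$ so that opposite boundaries go to opposite boundaries). Second, there is no monodromy issue to worry about in the sense you suggest; the only thing to check is realizability of the prescribed boundary jets by a single diffeomorphism of each cylinder, which the flow construction above handles.
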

\begin{proof}[Proof of the lemma]
We need to show that there exist germs of diffeomorphisms $\psi \colon (N^2, Q) \to  (\tilde N^2, \tilde Q)$ and $\Psi \colon (M^4, L) \to (\tilde M^4, \tilde L)$ such that the following diagram commutes:
\begin{equation}\label{diag4}
\centering
\begin{tikzcd}
  (M^4, L) \arrow{r}[]{\Psi}  \arrow{d}{ \F} &   (\tilde M^4, \tilde L)  \arrow{d}{\tilde \F}\\
(N^2, Q)   \arrow{r}{\psi}&  (\tilde N^2, \tilde Q).
\end{tikzcd}
\end{equation}

 To begin with, we construct these maps $\psi$ and $\Psi$ locally. Let $P_1, \dots, P_n$ be the singular points of $\F$ on the fiber $L$, and let $\tilde P_1, \dots, \tilde P_n$ be the singular points of $\tilde \F$ on the fiber $\tilde L$. Let also $\phi_i \colon (N^2, Q) \to (\Complex, 0) $ and  $\tilde \phi_i \colon (\tilde N^2, \tilde Q) \to (\Complex, 0) $ be normal charts satisfying the condition of the lemma. 
 Then, combining diagrams~\eqref{comDiag} for normal charts $\phi_i$ and $\tilde \phi_i$, we get the following commutative diagram
\begin{equation}\label{diag5}
\centering
\begin{tikzcd}
  (M^4, P_i) \arrow{r}[]{\Phi_i}  \arrow{d}{\F}  \arrow[bend left, dashed, ]{rr}[]{ \tilde \Phi_i^{-1} \circ \,\Phi_i}  &  (\Complex^2, 0)    \arrow{d}[]{uv} &   (\tilde M^4, \tilde P_i) \arrow{l}[swap]{\tilde \Phi_i}  \arrow{d}{\tilde \F}\\
(N^2, Q)  \arrow[bend right, dashed, ]{rr}[swap]{ \tilde \phi_i^{-1} \circ\, \phi_i}  \arrow{r}{\phi_i}& (\Complex, 0) & (\tilde N^2, \tilde Q) \arrow{l}[swap]{\tilde \phi_i}.
\end{tikzcd}
\end{equation}
This diagram can be viewed as a local version of \eqref{diag4}, with $\psi = \tilde \phi_i^{-1}\circ \phi_i $ and $\Psi = \tilde \Phi_i^{-1} \circ \Phi_i$.
Furthermore, the coincidence of the gluing maps $\phi_i \circ \phi_j^{-1} = \tilde \phi_i  \circ\tilde \phi_j^{-1} $ implies 
$$\tilde \phi_i^{-1}\circ \phi_i  = \tilde \phi_j^{-1}\circ \phi_j,
$$ i.e.  $\psi$ does not depend on $i$. (In other words, the lower dashed arrow in all $n$ copies of diagram~\eqref{diag5} is the same.) 
Now it remains to extend local diffeomorphisms $\Psi = \tilde \Phi_i^{-1} \circ \Phi_i \colon   (M^4, P_i)  \to (\tilde M^4, \tilde P_i) $ to a global one $ \Psi \colon (M^4, L) \to (\tilde M^4, \tilde L)$ and hence obtain a global version of \eqref{diag4}. To that end, notice that by Assumption~\ref{assumption3} of Definition~\ref{def:focussmooth}, the neighborhood of an $n$-pinched focus-focus singular fiber can be represented as a union of $n$ standard neighborhoods of focus-focus points and $n$ trivial fibrations into cylinders. The local diffeomorphisms 
$\tilde \Phi_i^{-1} \circ \Phi_i $ define maps between standard neighborhoods of focus-focus points and hence between boundaries of the cylinders. (More precisely, they define $\infty$-jets of diffeomorphisms of cylinders relative to the boundary. Also note that one may need to compose $\Phi_i$ with the map $(u,v) \mapsto (v,u)$ to ensure that opposite boundaries of each cylinder are mapped to opposite boundaries of another cylinder.) These diffeomorphisms between the boundaries can be extended inside the cylinders thanks to Assumption~\ref{assumption4} of Definition~\ref{def:focussmooth}. (This extension can be constructed as follows. First, we identify the cylinders using an arbitrary diffeomorphism. This reduces the problem to the following: Given a cylinder and  $\infty$-jets of orientation-preserving diffeomorphisms at its boundaries, one needs to find a global diffeomorphism of the cylinder to itself which realizes given jets. Such an extension can be perfomed, for instance, using the Moser path method. First, one extends given jets to actual diffeomorphisms in small neighborhoods of the boundary. Then one connects those diffeomorphisms with the identity. These paths of diffeomorphisms can be viewed as flows of certain time-dependent vector fields near the boundary. Using a partition of unity, one extends those vector fields to a global vector field on the cylinder. Integrating the latter vector field provides the desired diffeomorphism.) This extension gives us a global diffeomorphism $\Psi\colon (M^4, L) \to (\tilde M^4, \tilde L)$ making diagram \eqref{diag4} commute. Thus, the lemma is proved.
\end{proof}

\begin{proof}[Proof of Theorem \ref{thm:thm1}]
We begin with the first statement.
Assume that two focus-focus singularities $\F \colon (M^4, L) \to (N^2, Q)$ and $\tilde \F \colon (\tilde M^4, \tilde L) \to (\tilde N^2, \tilde Q)$  are diffeomorphic.
This means that there exist germs of diffeomorphisms $\psi \colon (N^2, Q) \to  (\tilde N^2, \tilde Q)$ and $\Psi \colon (M^4, L) \to (\tilde M^4, \tilde L)$ such that diagram \eqref{diag4} commutes. Take any normal charts $\phi_1, \dots, \phi_n \colon (N^2, Q)   \to (\Complex, 0)$ for the first singularity, and ``push them forward'' using the bottom arrow $\psi$ in diagram~\eqref{diag4}, i.e. consider the charts $ \phi_i \circ \psi^{-1} \colon  (\tilde N^2, \tilde Q)   \to (\Complex, 0)$ on the base of the second singularity. It is easy to see that these charts are normal. (Diffeomorphisms of focus-focus singularities preserve normality.) Therefore, if $\tilde \phi_1, \dots, \tilde \phi_n$ are any other normal charts for the second singularity, then by  Proposition~\ref{charOfliftable} we have 
 $$
 \tilde \phi_i = \psi_i \circ  \phi_i \circ \psi^{-1},
 $$
 where $\psi_i$ is liftable. But this immediately yields relation \eqref{transformationRule} between the gluing maps $\phi_{i,j} = \phi_i \circ \phi_j^{-1}$ and $\tilde \phi_{i,j} = \tilde \phi_i \circ \tilde \phi_j^{-1}$.

Conversely, assume that we are given two singularities such that, for certain normal charts $\phi_1, \dots, \phi_n$ for the first singularity and $\tilde \phi_1, \dots, \tilde \phi_n$  for the second one, the corresponding gluing maps are related by~\eqref{transformationRule}. 
Then, since $\phi_i$ is a normal chart, the chart  $\psi_i \circ \phi_i$, where $\psi_i$ is a liftable diffeomorphism entering~\eqref{transformationRule}, is normal as well  (Proposition \ref{charOfliftable}). Furthermore, gluing maps for the normal charts $ \psi_1 \circ \phi_1, \dots,  \psi_n \circ \phi_n$ are the same as for $\tilde \phi_1, \dots, \tilde \phi_n$:
$$
  ( \psi_i \circ \phi_i ) \circ ( \psi_j\circ \phi_j )^{-1} =  \psi_i \circ \phi_i \circ \phi_j^{-1} \circ \psi_j^{-1} = \tilde \phi_i \circ \tilde  \phi_j^{-1}.
$$
So, the singularities are diffeomorphic by Lemma \ref{gluingMapsDetermineTheSingulatiry}. 
Thus, the first statement of the theorem is proved.

To prove the second statement, we use that any collection of diffeomorphisms $\phi_{i,j} \in \Diff_0(\Complex)$ satisfying \eqref{cocycle} can be realized as gluing maps for an appropriate focus-focus singularity. Such a singularity can be obtained by taking standard neighborhoods of focus-focus points and identifying neighborhoods of their boundaries (which are trivial foliations into cylinders) as prescribed by the maps $\phi_{i, i+1}$. (The orientations of the boundaries should be matched properly for the resulting singularity to satisfy Assumption \ref{assumption4}. Note that since we glue neighborhoods of boundaries, the resulting space automatically obtains a smooth structure.) Therefore, a smooth structure on a focus-focus singularity is determined by a collection $\{\phi_{i,j} \in \Diff_0(\Complex)\}$  satisfying~\eqref{cocycle} modulo the action defined by~\eqref{transformationRule}. But since such a collection $\{\phi_{i,j}\}$
is uniquely determined by $\phi_{1,2}, \dots, \phi_{1,n}$, this reduces to the action of  $\ADiff_0(\Complex)^n$ on $\Diff_0(\Complex)^{n-1}$ given by \eqref{gaugeAction}.

%
To prove the last statement, consider the map
$ \Diff_0(\Complex)^{n-1}  \to \Diff^\infty_0(\Complex)^{n-1}$
which takes a collection of germs to the corresponding jets. This map is surjective by Borel's theorem on the existence of a smooth map with a given Taylor series. Furthermore, this map intertwines $\ADiff_0(\Complex)^n$ action on $ \Diff_0(\Complex)^{n-1}$ with the $\ADiff_0^\infty(\Complex)^n$ action on $ \Diff_0^\infty(\Complex)^{n-1}$, which gives a surjective map between the corresponding orbit spaces
 $$ \Diff_0(\Complex)^{n-1} \, / \, \ADiff_0(\Complex)^n \to \Diff^\infty_0(\Complex)^{n-1} \, / \, \ADiff_0^\infty(\Complex)^n. $$
To complete the proof it suffices to notice that since flat diffeomorphisms are liftable (see Remark \ref{rem:adm}), the latter map is also injective and hence a bijection. Thus, the theorem is proved.
%
\end{proof}
 \begin{remark}\label{rem:infinitejets}
 In what follows, we prefer to work with orientation-preserving gluing maps. Let $\Diff_0(\Complex)_+ \subset \Diff_0(\Complex)$ be the subgroup of orientation-preserving germs. Then, since complex conjugation is liftable, each orbit of the action $\ADiff_0(\Complex)^n \acts \Diff_0(\Complex)^{n-1}$ has a representative which belongs to $\Diff_0(\Complex)^{n-1}_+$. Also note than the action of an element $(\psi_1, \dots, \psi_n) \in \ADiff_0(\Complex)^n$ on $ \Diff_0(\Complex)^{n-1}$ preserves  $\Diff_0(\Complex)_+^{n-1}$ if and only if either all $\psi_i$'s are orientation-preserving, or all of them are orientation-reversing. Denote by $\ADiff_0(\Complex)_+ \subset \ADiff_0(\Complex)$ the subgroup of orientation-preserving liftable germs, and let $\ADiff_0(\Complex)_- := \ADiff_0(\Complex) \, \setminus \, \ADiff_0(\Complex)_+$ be orientation-reversing liftable germs. Then we get a natural identification between orbits spaces
 $$ \Diff_0(\Complex)^{n-1} \, / \, \ADiff_0(\Complex)^n \simeq  \Diff_0(\Complex)^{n-1}_+ \, / \, \ADiff_0(\Complex)_\pm^n, $$
 where $\ADiff_0(\Complex)_\pm^n:= \ADiff_0(\Complex)_+^n \sqcup \ADiff_0(\Complex)_-^n$, and the action of $\ADiff_0(\Complex)_\pm^n$ on $\Diff_0(\Complex)^{n-1}_+$ is defined by the same formula~\eqref{gaugeAction}.  
 \end{remark}
  \begin{corollary}\label{cor2.11}
 Smooth structures on an $n$-pinched focus-focus singularity are in one-to-one correspondence with orbits of the $\ADiff_0(\Complex)_\pm^n$ {action} on $\Diff_0(\Complex)^{n-1}_+$ defined by~\eqref{gaugeAction}. 
 \end{corollary}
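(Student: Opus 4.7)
The plan is to derive the corollary directly from part (2) of Theorem~\ref{thm:thm1} by restricting the orbit correspondence to orientation-preserving representatives, as outlined in Remark~\ref{rem:infinitejets}. Concretely, I would establish a bijection between the two orbit spaces
$$\Diff_0(\Complex)^{n-1} \, / \, \ADiff_0(\Complex)^n \;\longleftrightarrow\; \Diff_0(\Complex)^{n-1}_+ \, / \, \ADiff_0(\Complex)_\pm^n,$$
induced by the inclusion $\Diff_0(\Complex)^{n-1}_+ \hookrightarrow \Diff_0(\Complex)^{n-1}$.

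The first step is surjectivity on orbits. Given any $(\phi_{1,2}, \dots, \phi_{1,n}) \in \Diff_0(\Complex)^{n-1}$, I would define $\varepsilon_i \in \{+,-\}$ recording the orientation of $\phi_{1,i}$. Set $\psi_1 = \id$ and take $\psi_i$ to be the identity if $\varepsilon_i = +$, or the complex conjugation $c \colon z \mapsto \bar z$ if $\varepsilon_i = -$. Since $c$ is liftable (the lift $(u,v)\mapsto(\bar u, \bar v)$ commutes with $uv$), each $\psi_i \in \ADiff_0(\Complex)$. The action~\eqref{gaugeAction} then sends $\phi_{1,i}$ to $\phi_{1,i}\circ\psi_i^{-1}$, which by construction is orientation-preserving. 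Thus every orbit meets $\Diff_0(\Complex)^{n-1}_+$.

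The second step is to identify the subgroup of $\ADiff_0(\Complex)^n$ that preserves the subset $\Diff_0(\Complex)^{n-1}_+$. A direct check of orientations in $\psi_1 \circ \phi_{1,i} \circ \psi_i^{-1}$ shows that, when $\phi_{1,i}$ is orientation-preserving, the result is orientation-preserving if and only if $\psi_1$ and $\psi_i$ have the same orientation type. Requiring this for every $i$ forces either all $\psi_j$ to be orientation-preserving or all to be orientation-reversing, i.e., $(\psi_1, \dots, \psi_n) \in \ADiff_0(\Complex)^n_+ \sqcup \ADiff_0(\Complex)^n_- = \ADiff_0(\Complex)^n_\pm$. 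Elements outside this subset move tuples out of $\Diff_0(\Complex)^{n-1}_+$, so two elements of $\Diff_0(\Complex)^{n-1}_+$ lie in the same $\ADiff_0(\Complex)^n$-orbit if and only if they lie in the same $\ADiff_0(\Complex)^n_\pm$-orbit — giving injectivity.

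Combined with Theorem~\ref{thm:thm1}(2), these two observations yield the claimed bijection. There is no real obstacle in the argument; the only technical point to watch is the bookkeeping of orientations, in particular the fact that the full group $\ADiff_0(\Complex)^n$ contains transformations that mix orientation types across different indices and thus move representatives in and out of $\Diff_0(\Complex)^{n-1}_+$, whereas the subgroup $\ADiff_0(\Complex)^n_\pm$ is precisely the maximal subgroup that acts on $\Diff_0(\Complex)^{n-1}_+$.
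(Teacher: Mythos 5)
Your proof is correct and follows essentially the same route as the paper, whose justification of this corollary is Remark~\ref{rem:infinitejets}: there the authors likewise use liftability of complex conjugation to find an orientation-preserving representative in each orbit, and observe that $(\psi_1,\dots,\psi_n)$ maps $\Diff_0(\Complex)^{n-1}_+$ into itself precisely when all $\psi_i$ are orientation-preserving or all are orientation-reversing. Your orientation bookkeeping, including the pointwise check that any $\ADiff_0(\Complex)^n$-element carrying one orientation-preserving tuple to another must lie in $\ADiff_0(\Complex)^n_\pm$, matches the intended argument.
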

 
\begin{remark}
In the complex-analytic setting, focus-focus fibers are known as Kodaira $I_n$ singularities of elliptic fibrations. In this case all gluing maps are holomorphic and hence liftable. Therefore, by Theorem \ref{thm:thm1} any such singularity is diffeomorphic to the one with trivial gluing maps. In fact, a stronger statement is true: any two $I_n$ singularities with the same $n$ are complex isomorphic.
\end{remark} 
 
 \subsection{Description of first order invariants}
 Since the groups $\ADiff_0(\Complex)$ and $\Diff_0(\Complex)$ are infinite-dimensional, an explicit description of orbits for action \eqref{gaugeAction} is a problem of unknown complexity. (See, however, the description of generic orbits for $n=2$ in Section \ref{sec:dpinch}). Nevertheless, one can construct invariants of this action by replacing the groups $\ADiff_0(\Complex)$ and $\Diff_0(\Complex)$ with the corresponding finite-dimensional groups of finite-order jets. The aim of this section is to define these invariants and explicitly describe those which are related to $1$-jets.

 Let $\Diff^k_0(\Complex)$ be the group of $k$-jets at $0$ of diffeomorphisms $(\Complex, 0) \to (\Complex, 0)$, and $\ADiff_0^k(\Complex)$ be the subgroup of liftable $k$-jets, i.e., jets divisible by $z$ or $\bar z$. (Note that both $\Diff^k_0(\Complex)$ and $\ADiff_0^k(\Complex)$ are finite-dimensional real Lie groups). Let also  $\Diff_0^k(\Complex)_+ \subset \Diff_0^k(\Complex)$ be the subgroup of orientation-preserving jets, and let 
 $ \ADiff_0^k(\Complex)_+ :=  \Diff_0^k(\Complex)_+ \cap \ADiff_0^k(\Complex)$, $\ADiff_0^k(\Complex)_- := \ADiff_0^k(\Complex) \, \setminus \, \ADiff_0^k(\Complex)_+$ (cf. Remark \ref{rem:infinitejets}).
 
  Then we have surjective homomorphisms $ \Diff_0(\Complex)_+ \to \Diff^k_0(\Complex)_+$ assigning to each germ the corresponding $k$-jet. These homomorphisms induce surjective maps between orbit spaces
 \begin{equation}\label{mapsBetweenOrbitSpaces}
\Diff_0(\Complex)^{n-1}_+ \, / \, \ADiff_0(\Complex)_\pm^n \to \Diff_0^k(\Complex)^{n-1}_+ \, / \, \ADiff_0^k(\Complex)_\pm^n,
\end{equation}
 where $\ADiff_0^k(\Complex)_\pm^n:= \ADiff_0^k(\Complex)_+^n \sqcup \ADiff_0^k(\Complex)_-^n$, and the action of $\ADiff_0^k(\Complex)_\pm^n$ on $\Diff^k_0(\Complex)^{n-1}_+$ is given by the same formula \eqref{gaugeAction}. In other words, invariants of the action  $\ADiff_0^k(\Complex)_\pm^n \acts \Diff^k_0(\Complex)^{n-1}_+$  are also invariants of the action  $\ADiff_0(\Complex)_\pm^n \acts\Diff_0(\Complex)^{n-1}_+$, i.e. invariants of $n$-pinched focus-focus singularities. We say that such invariants have \textit{order k}. In this section we describe the first order invariants.\par
 The group $\Diff^1_0(\Complex)$ is isomorphic to $\GL_2(\R)$, but it will be convenient to regard its elements as invertible $\R$-linear functions from $\Complex$ to $\Complex$, that are functions of the form $a z + b \bar z$, where $a,b \in \Complex$ are such that $|a| \neq |b|$. The subgroup $\Diff^1_0(\Complex)_+$ consists of orientation-preserving $\R$-linear functions from $\Complex$ to $\Complex$, that are functions of the form $a z + b \bar z$, where $a,b \in \Complex$ are such that $|a| > |b|$. The subgroup $\ADiff^1_0(\Complex)$ consists of invertible $\Complex$-linear and $\Complex$-antilinear functions $\Complex \to\Complex$, that are functions of the form $az$, where $a \in \Complex^*$ (such functions constitute the subgroup $\ADiff^1_0(\Complex)_+$), or $b\bar z$, where $b \in \Complex^*$ (such functions form the complimentary subset $\ADiff^1_0(\Complex)_-$).\par
 \begin{proposition}\label{firstOrderInavriants}
 \begin{enumerate}\item
 Every orbit of the action  $\ADiff_0^1(\Complex)^n_\pm\acts\Diff^1_0(\Complex)^{n-1}_+$ has a representative of the form
\begin{equation}\label{reprForm}
 (z + \mu_{1}\bar z, \dots, z+ \mu_{n-1} \bar z), 
\end{equation}
where $\mu_i \in \Complex$, $|\mu_i| < 1$ for each~$i$.
 This element is unique up to multiplying all $\mu_i$'s by the same complex number of absolute value $1$ and replacing each $\mu_i$ by $\bar \mu_i$ (or performing both operations at a time).  Thus, the orbits are parametrized by $n-1$ numbers $\mu_1, \dots, \mu_{n-1}$ in the open unit disk $\{ z \in \Complex \mid |z| < 1\}$, considered up to multiplication by the same complex number of absolute value $1$ and simultaneous complex conjugation.


\item The numbers $\mu_i$ corresponding to the orbit of  $(a_1 z + b_1 \bar z, \dots, a_{n-1} z + b_{n-1} \bar z ) \in \Diff^1_0(\Complex)^{n-1}_+$ are given by
$$
\mu_i = \frac{b_i}{\bar a_i}.
$$

\end{enumerate}
 \end{proposition}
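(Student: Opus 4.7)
My plan is to prove the proposition by direct computation, exploiting the fact that at the $1$-jet level the groups involved are small and explicit. The starting point is the parameterization of $\ADiff_0^1(\Complex)$: an element $\psi \in \ADiff_0^1(\Complex)_+$ has the form $\psi(z) = \alpha z$ with $\alpha \in \Complex^*$, while an element of $\ADiff_0^1(\Complex)_-$ has the form $\psi(z) = \beta \bar z$ with $\beta \in \Complex^*$. Since we act on $\Diff_0^1(\Complex)_+^{n-1}$, all the $\psi_j$'s in a tuple $(\psi_1, \dots, \psi_n)\in \ADiff_0^1(\Complex)_\pm^n$ must have the same orientation.

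The main computational step is to plug these forms into the action \eqref{gaugeAction} and read off how the coefficients $a_i, b_i$ of $\phi_{1,i}(z) = a_i z + b_i \bar z$ transform. A short calculation shows that in the orientation-preserving case with $\psi_j(z) = \alpha_j z$,
\begin{equation*}
a_i \;\longmapsto\; \frac{\alpha_1}{\alpha_i}\, a_i, \qquad b_i \;\longmapsto\; \frac{\alpha_1}{\bar\alpha_i}\, b_i,
\end{equation*}
and in the orientation-reversing case with $\psi_j(z) = \beta_j \bar z$,
\begin{equation*}
a_i \;\longmapsto\; \frac{\beta_1}{\beta_i}\, \bar a_i, \qquad b_i \;\longmapsto\; \frac{\beta_1}{\bar\beta_i}\, \bar b_i.
\end{equation*}
From these formulas one immediately reads off the transformation law for $\mu_i := b_i/\bar a_i$: in the first case $\mu_i \mapsto (\alpha_1/\bar\alpha_1)\mu_i$, and in the second case $\mu_i \mapsto (\beta_1/\bar\beta_1)\bar\mu_i$. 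Since $\alpha_1/\bar\alpha_1$ and $\beta_1/\bar\beta_1$ range over all of $U(1)$ as $\alpha_1, \beta_1$ range over $\Complex^*$, this establishes that $\mu_i$ is well-defined up to simultaneous multiplication by a unit modulus complex number and simultaneous complex conjugation, which also proves the second statement of the proposition.

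To prove existence of the representative \eqref{reprForm}, I would start from $(a_1 z + b_1 \bar z, \dots, a_{n-1} z + b_{n-1}\bar z)$ and apply the orientation-preserving transformation with $\alpha_1 = 1$ and $\alpha_i = a_i$ for $i \geq 2$; this sends $a_i \mapsto 1$ and $b_i \mapsto b_i / \bar a_i = \mu_i$. The constraint $|a_i| > |b_i|$ (which holds since $\phi_{1,i}$ is orientation-preserving) immediately gives $|\mu_i| < 1$. For uniqueness, suppose $(z + \mu_i \bar z)$ and $(z + \tilde\mu_i \bar z)$ lie in the same orbit. Since the leading coefficient $a_i = 1$ must be preserved, the transformation formulas above force $\alpha_1 = \alpha_i$ for all $i$ in the orientation-preserving case (respectively $\beta_1 = \beta_i$ in the orientation-reversing case), and the residual freedom is exactly multiplication of all $\mu_i$'s by the common unit $\alpha_1/\bar\alpha_1$, possibly combined with complex conjugation.

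There is no serious obstacle here: the whole argument reduces to bookkeeping of complex conjugations in a four-line computation. The only subtlety worth flagging is that the two operations (multiplication by a unit and conjugation) must be allowed independently rather than only together, which is why the statement of the proposition includes the parenthetical ``or performing both operations at a time''; this is accounted for by the two connected components of $\ADiff_0^1(\Complex)_\pm$.
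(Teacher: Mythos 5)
Your proof is correct and follows essentially the same route as the paper's: you normalize by acting with the orientation-preserving element whose components are $z$ and the $a_i z$'s, read off $\mu_i = b_i/\bar a_i$, and identify the residual stabilizer of the normal form as the diagonal elements $(cz,\dots,cz)$ and $(c\bar z,\dots,c\bar z)$, yielding the $U(1)$-rotation and conjugation ambiguity. The only difference is that you write out the general transformation law for $(a_i,b_i)$ explicitly, which the paper leaves as "one checks"; the computations are accurate.
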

 \begin{proof}
Take $\xi := (a_1 z + b_1 \bar z, \dots, a_{n-1}z + b_{n-1}\bar z ) \in \Diff^1_0(\Complex)^{n-1}_+$. 
Acting by $\eta := (z, a_1z, \dots, a_{n-1}z) \in \ADiff_0^1(\Complex)^n_+$ on $\xi$, we get
$$
 (z \circ (a_1 z + b_1 \bar z) \circ (a_1^{-1}z), \dots ) =  (z + \mu_{1}\bar z, \dots),
$$
 where $\mu_i = {b_i}{\bar a_i}^{-1}$ (see formula \eqref{gaugeAction} for the action). This proves the existence part of the first statement, as well as the second statement. (Note that $|b_i  \bar a_i^{-1}| < 1$ thanks to the orientation-preserving condition.)\par
To prove the uniqueness part of the first statement, one checks that  an element of $ \ADiff_0^1(\Complex)^n_+$ of form  \eqref{reprForm}  is mapped,  under the action of $\eta \in \ADiff_0^1(\Complex)^n_\pm$ ,  to an element of the same form  if and only if $\eta =  (cz, \dots, cz) $ or $\eta = (c\bar z, \dots, c\bar z)$, where $c \in \Complex^*$. In the former case, the numbers $\mu_i$ in   \eqref{reprForm} are transformed by the rule $  \mu_i \mapsto {c}{\bar c}^{-1} \mu_i $, while in the latter case we get  $ \mu_i \mapsto {c}{\bar c}^{-1}  \bar \mu_i $. But since ${c}{\bar c}^{-1} $ can take any value on the unit circle, the result follows.
 \end{proof}
 \begin{corollary}\label{cor2.13}
 The orbit space $ \Diff^1_0(\Complex)^{n-1} \, / \, \ADiff_0^1(\Complex)^n$ is homeomorphic to the quotient of a polydisk
 $
 \{ z \in \Complex \mid |z| < 1\}^{n-1} $
by the diagonal action of the orthogonal group~$\mathrm{O}_1(\R)$. In particular, $\dim \left(  \Diff^1_0(\Complex)^{n-1} \, / \, \ADiff_0^1(\Complex)^n \right) = 2n-3$.
 \end{corollary}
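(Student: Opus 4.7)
The plan is to derive this statement directly from Proposition \ref{firstOrderInavriants}. That proposition already produces a canonical representative $(z+\mu_1\bar z,\dots,z+\mu_{n-1}\bar z)$ for each orbit and describes the remaining ambiguity precisely, so the corollary reduces to three short checks: (i) identifying the residual symmetry group with $\mathrm{O}_1(\R)$, (ii) upgrading the resulting bijection to a homeomorphism, and (iii) computing the dimension.

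For step (i), I would observe that the residual symmetries act on $(\mu_1,\dots,\mu_{n-1})\in D^{n-1}$, where $D=\{z\in\Complex\mid |z|<1\}$, by the diagonal circle action $\mu_i\mapsto e^{i\theta}\mu_i$ together with the diagonal involution $\mu_i\mapsto\bar\mu_i$. The rotations $\mathrm{SO}(2)=\{z\mapsto e^{i\theta}z\}$ and the reflection $z\mapsto\bar z$ generate precisely the orthogonal group of $\Complex\cong\R^2$, which is the paper's $\mathrm{O}_1(\R)$, embedded diagonally in $(n-1)$ copies. Hence Proposition \ref{firstOrderInavriants} provides a natural bijection between the orbit space and $D^{n-1}/\mathrm{O}_1(\R)$.

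For step (ii), I would use the explicit formula $\mu_i=b_i/\bar a_i$ from part (2) of Proposition \ref{firstOrderInavriants}: it defines a continuous surjection $\Diff^1_0(\Complex)^{n-1}_+\to D^{n-1}$ which intertwines the $\ADiff_0^1(\Complex)_\pm^n$-action on the source with the $\mathrm{O}_1(\R)$-action on the target, and hence descends to a continuous bijection between the quotients. Because $\mathrm{O}_1(\R)$ is a compact Lie group acting continuously on a locally compact Hausdorff space, the quotient $D^{n-1}/\mathrm{O}_1(\R)$ is Hausdorff and the quotient map is proper; together with the continuous local section provided by choosing the canonical representative, this upgrades the bijection to a homeomorphism.

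For step (iii), the dimension count is a routine orbit-stabilizer computation: $\dim_\R D^{n-1}=2n-2$ and $\dim \mathrm{O}_1(\R)=1$, so I need only check that the generic stabilizer is trivial. A point is fixed by a nontrivial rotation $\mu_i\mapsto e^{i\theta}\mu_i$ only when every $\mu_i$ vanishes, and by some orientation-reversing element $\mu_i\mapsto e^{i\theta}\bar\mu_i$ only when all nonzero $\mu_i$ lie on a common line through the origin; both loci have positive real codimension in $D^{n-1}$. Therefore a generic orbit is $1$-dimensional and the quotient is $(2n-3)$-dimensional near such orbits. The main potential obstacle - properness of the action and Hausdorffness of the quotient - is immediate from compactness of $\mathrm{O}_1(\R)$, so no delicate point remains; all the substantive work was already done in Proposition \ref{firstOrderInavriants}.
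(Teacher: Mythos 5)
Correct, and essentially the same approach as the paper, which states the corollary without proof as an immediate consequence of Proposition~\ref{firstOrderInavriants}; your three checks simply spell out that deduction. One tiny slip: for $n=2$ the generic stabilizer is not trivial but contains the reflection in the line through the single $\mu$ (so that locus is not of positive codimension); since the stabilizer is still finite, the generic orbit is still $1$-dimensional and the dimension count $2n-3$ is unaffected.
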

%
Hence we get the following result.
 \begin{theorem}\label{theorem:firstOrderInv}
Focus-focus singularities with $n$ singular points have $2n-3$ first order invariants. These invariants are given by the numbers
$$
\mu_i := \frac{{\partial \phi_{1,i}}/{\partial\bar z}(0)}{\phantom{\int}\overline{{{\partial\phi_{1,i}}}/{\partial   z}}(0)\phantom{\int}}\in 
 \{ z \in \Complex \mid |z| < 1\}, \quad i= 2, \dots, n,
$$
considered up to multiplication by the same complex number of absolute value $1$ and simultaneous complex conjugation. Here $\phi_{1,2}, \phi_{1,3}, \dots \phi_{1,n} \in  \Diff_0(\Complex)_+$ are orientation-preserving gluing maps.
 \end{theorem}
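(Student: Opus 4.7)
The plan is to derive Theorem \ref{theorem:firstOrderInv} as a direct consequence of the preceding results, specifically Theorem \ref{thm:thm1} together with Proposition \ref{firstOrderInavriants}. The content of the theorem is really just a reinterpretation of the explicit description of $1$-jet orbits in terms of intrinsic data attached to the gluing maps.

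First I would recall, via Corollary \ref{cor2.11} and the surjective map \eqref{mapsBetweenOrbitSpaces}, that any invariant of the $\ADiff_0^1(\Complex)_\pm^n$-action on $\Diff_0^1(\Complex)^{n-1}_+$ pulls back to an invariant of the full action $\ADiff_0(\Complex)_\pm^n \acts \Diff_0(\Complex)^{n-1}_+$, and hence defines a first-order smooth invariant of the $n$-pinched focus-focus singularity. So it suffices to evaluate the invariants produced by Proposition \ref{firstOrderInavriants} on the $1$-jets of the chosen gluing maps $\phi_{1,2},\dots,\phi_{1,n}$.

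Next, I would identify the coefficients appearing in Proposition \ref{firstOrderInavriants}. For an orientation-preserving gluing map $\phi_{1,i}$, its $1$-jet at the origin has the form $a_i z + b_i \bar z$ with $|a_i|>|b_i|$, and by the definition of the Wirtinger operators
\[
a_i = \frac{\partial \phi_{1,i}}{\partial z}(0), \qquad b_i = \frac{\partial \phi_{1,i}}{\partial \bar z}(0).
\]
Substituting these expressions into the formula $\mu_i = b_i/\bar a_i$ from the second part of Proposition \ref{firstOrderInavriants} yields precisely the numbers stated in the theorem, and the orientation-preserving condition guarantees $|\mu_i|<1$. The ambiguity of the $\mu_i$'s (simultaneous multiplication by a unit complex number, and simultaneous complex conjugation) is exactly the ambiguity described in the first part of Proposition \ref{firstOrderInavriants}, so the $\mu_i$'s are well-defined invariants modulo this equivalence.

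Finally, the count of independent invariants is immediate from Corollary \ref{cor2.13}: the orbit space has dimension $2n-3$, since it is the quotient of an $(n-1)$-dimensional complex polydisk (real dimension $2n-2$) by the one-dimensional circle action combined with the discrete conjugation. The only potential subtlety I would double-check is the bookkeeping of orientations --- namely, that working with $\Diff_0(\Complex)_+$ and the combined group $\ADiff_0(\Complex)_\pm^n$ (rather than with the full $\Diff_0(\Complex)$) indeed gives the same orbit space, which is handled in Remark \ref{rem:infinitejets}. Beyond this verification, the proof is essentially a transcription of Proposition \ref{firstOrderInavriants} into the language of gluing maps, so I do not expect any genuinely hard step.
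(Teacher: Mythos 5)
Your proposal is correct and matches the paper's (implicit) argument: the paper states the theorem immediately after Corollary \ref{cor2.13} with ``Hence we get the following result,'' i.e.\ it is obtained exactly as you describe, by identifying the $1$-jet coefficients of the gluing maps with the Wirtinger derivatives $a_i = \partial\phi_{1,i}/\partial z(0)$, $b_i = \partial\phi_{1,i}/\partial\bar z(0)$ and plugging into $\mu_i = b_i/\bar a_i$ from Proposition \ref{firstOrderInavriants}, with the count $2n-3$ coming from Corollary \ref{cor2.13}. No gaps.
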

 \subsection{First order invariants and complex structures}\label{sec:foics}
 In this section we give a geometric  interpretation of first order invariants. Later on, in Section \ref{sec:example}, we will generalize this construction to give an example of a singularity which does not admit a smooth almost direct product decomposition.\par

 Let $\F \colon (M^4, P) \to (N^2, Q) $ be a germ of a smooth map with a focus-focus singular point at $P$. Then every normal chart $\phi \colon (N^2, Q) \to (\Complex, 0)$ gives rise to a complex structure $J$ on the tangent space $\T_QN^2$, defined as the pullback of the canonical complex structure on $\Complex$ by means of $\phi$. In other words, we have the following commutative diagram
 \begin{equation}
 \begin{tikzcd}\label{diag6}
\T_QN^2  \arrow{r}[]{\diff_Q\phi}  \arrow{d}{J}& \T_0\Complex    \arrow{d}[]{J_{st}}\\
\T_QN^2  \arrow{r}{\diff_Q\phi} &  \T_0\Complex,
\end{tikzcd}
\end{equation}
 where $J_{st}$ is multiplication by $\i $, and $\diff_Q$ stands for the differential at $Q$.
 \begin{proposition}
 Complex structures on $\T_QN^2$ coming from different normal charts (associated with the same  focus-focus point $P$) agree up to sign.
 \end{proposition}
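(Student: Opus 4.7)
The plan is to use the characterization of liftable germs from Theorem \ref{liftable} together with the fact that normal charts differ by liftable germs (Proposition \ref{charOfliftable}). Specifically, given two normal charts $\phi, \tilde\phi \colon (N^2, Q) \to (\Complex, 0)$ at the same focus-focus point $P$, the transition map $\psi := \tilde\phi \circ \phi^{-1} \in \Diff_0(\Complex)$ is liftable. By Theorem~\ref{liftable}, $\psi$ has one of two forms: $\psi(z) = z\,h(z)$ or $\psi(z) = \bar z\,h(z)$, with $h(0) \neq 0$.

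Next I would compute the differential $\diff_0\psi \colon \T_0\Complex \to \T_0\Complex$ in each case. If $\psi(z) = z\,h(z)$, then $\diff_0\psi$ is multiplication by the complex number $h(0)$, which commutes with $J_{st}$ and hence is $\Complex$-linear. If $\psi(z) = \bar z\,h(z)$, then $\diff_0\psi$ is the composition of complex conjugation with multiplication by $h(0)$, which anti-commutes with $J_{st}$ and hence is $\Complex$-antilinear.

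Finally, the two complex structures $J$ and $\tilde J$ on $\T_QN^2$ defined through diagram \eqref{diag6} are related by $\tilde J = (\diff_Q\tilde\phi)^{-1} \circ J_{st} \circ \diff_Q\tilde\phi$ and $J = (\diff_Q\phi)^{-1} \circ J_{st} \circ \diff_Q\phi$. Since $\diff_Q\tilde\phi = \diff_0\psi \circ \diff_Q\phi$ by the chain rule, a short computation gives $\tilde J = (\diff_Q\phi)^{-1} \circ (\diff_0\psi)^{-1} \circ J_{st} \circ \diff_0\psi \circ \diff_Q\phi$. In the first case (when $\diff_0\psi$ commutes with $J_{st}$) this simplifies to $\tilde J = J$, while in the second case (when $\diff_0\psi$ anti-commutes with $J_{st}$) it gives $\tilde J = -J$, establishing the proposition.

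There is essentially no obstacle here: the statement is an immediate consequence of the dichotomy in Theorem~\ref{liftable} combined with the chain rule. The only point that requires a moment of attention is verifying that multiplication by a nonzero complex number $h(0)$ is $\Complex$-linear (rather than merely $\R$-linear) and that precomposing with conjugation turns this into a $\Complex$-antilinear map, which is purely formal.
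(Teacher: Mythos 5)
Your proof is correct and follows exactly the paper's own argument: both deduce from Proposition~\ref{charOfliftable} that the transition map between normal charts is liftable, then invoke Theorem~\ref{liftable} to conclude its differential is $\Complex$-linear or $\Complex$-antilinear, so that conjugating $J_{st}$ by it yields $\pm J_{st}$ and hence $\tilde J = \pm J$. No differences worth noting.
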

 \begin{proof}
 From diagram \eqref{diag6} we get
  $
J_i = (\diff_Q\phi_i)^{-1} \circ J_{st} \circ \diff_Q\phi_i,
$
while for a different normal chart $\tilde \phi$ we get 
\begin{gather}
\tilde J_i = (\diff_Q\tilde \phi_i)^{-1}  \circ J_{st} \circ  \diff_Q\tilde \phi_i \\ =  (\diff_Q\phi_i)^{-1}  \circ (\diff_Q\phi_i \circ (\diff_Q\tilde \phi_i)^{-1}) \circ J_{st} \circ (\diff_Q\phi_i \circ (\diff_Q\tilde \phi_i)^{-1})^{-1}\circ \diff_Q\phi_i \\
=  (\diff_Q\phi_i)^{-1}  \circ (\pm J_{st}) \circ  \diff_Q\phi_i = \pm J_i,
\end{gather}
  where we used that the germ $\phi  \circ \tilde \phi^{-1} \colon (\Complex, 0) \to (\Complex, 0)$ is liftable (see Proposition \ref{charOfliftable}) and hence its differential $\diff (\phi \circ \tilde \phi^{-1}) =  \diff\phi_i \circ \diff\tilde \phi_i^{-1}$ is complex or anti-complex  (Theorem~\ref{liftable}). 
  \end{proof}
So, we get a well-defined pair of complex structures $\pm J$ on  $\T_QN^2$.  Now assume that the fiber of $\F$ over $Q$ contains $n$ focus-focus points $P_1, \dots, P_n$. Then we get $n$ pairs of complex structures $\pm J_i$ on  $\T_QN^2$. By construction, these pairs, considered up to simultaneous conjugation, are invariant under diffeomorphisms. Since the space of complex structures on $\R^2$ is $2$-dimensional, while the conjugation action of $\GL_2(\R)$ has one-dimensional kernel consisting of scalar matrices, this way we get $2n-3$ smooth invariants of $n$-pinched fosus-focus singularities, cf. Theorem~\ref{theorem:firstOrderInv}.
 \begin{proposition}
 The invariants of the $n$-tuple $(\pm J_1, \dots, \pm J_n)$ under the $\GL_2(\R)$ action are exactly the first order invariants, as defined in Theorem~\ref{theorem:firstOrderInv}. In other words, two focus-focus singularities have conjugate tuples $(\pm J_1, \dots, \pm J_n)$ if and only if they have the same first order invariants.
 \end{proposition}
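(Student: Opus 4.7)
The plan is to pass from the $\GL_2(\R)$-conjugation equivalence on tuples $(\pm J_1,\ldots,\pm J_n)$ to the action of $\ADiff_0^1(\Complex)_\pm^n$ on $\Diff_0^1(\Complex)_+^{n-1}$ from Proposition~\ref{firstOrderInavriants}, and verify that the two equivalence relations match.

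First, I would fix normal charts $\phi_1,\dots,\phi_n$ and use $d_Q\phi_1$ to identify $T_QN^2$ with $\Complex$, so that $J_1$ becomes $J_{st}$. Writing $A_i := d_Q\phi_i$ and $M_i := d_0\phi_{1,i} = A_1 A_i^{-1} \in \Diff_0^1(\Complex)_+$, diagram~\eqref{diag6} gives, under this identification, $J_i = M_i\,J_{st}\,M_i^{-1}$. Since $M_i$ is precisely the $1$-jet of the orientation-preserving gluing map $\phi_{1,i}$ appearing in Theorem~\ref{theorem:firstOrderInv}, the data of $(J_2,\dots,J_n)$ in the normalization $J_1 = J_{st}$ is equivalent to $(M_2,\dots,M_n) \in \Diff_0^1(\Complex)_+^{n-1}$, modulo right multiplication by $\Complex^*$ (the stabilizer of $J_{st}$), which does not affect the Beltrami-type invariants $\mu_i := b_i/\bar a_i$, where $M_i(z) = a_i z + b_i\bar z$.

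Next I would identify the remaining equivalence relation. Once $J_1 = J_{st}$ has been fixed, the residual symmetry of the tuple of pairs $(\pm J_1,\ldots,\pm J_n)$ is the stabilizer of the pair $\{J_{st},-J_{st}\}$ inside $\GL_2(\R)$ --- namely the normalizer $N = \Complex^* \sqcup \Complex^*\kappa$ of $\Complex^*$, where $\kappa$ is complex conjugation. Crucially, forcing orientation-preserving representatives for the pairs $\pm J_i$ couples the sign ambiguities: an orientation-reversing $A \in \GL_2(\R)$ sends every orientation-preserving $J_i$ to an orientation-reversing one, and one must flip \emph{all} of them to re-select orientation-preserving representatives. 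A direct computation then shows that $c \in \Complex^*$ acts on $(M_2,\dots,M_n)$ by $M_i \mapsto c M_i$, inducing $\mu_i \mapsto (c/\bar c)\,\mu_i$, while $c\kappa$ combined with the simultaneous sign flip acts by $M_i \mapsto c\,\kappa\,M_i\,\kappa$, inducing $\mu_i \mapsto (c/\bar c)\,\bar\mu_i$. Thus on $(\mu_2,\ldots,\mu_n)$ the residual action is exactly simultaneous multiplication by a common unit complex number and simultaneous complex conjugation --- precisely the equivalence from Theorem~\ref{theorem:firstOrderInv}.

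Putting the two steps together yields a bijection between $\GL_2(\R)$-equivalence classes of tuples of pairs $(\pm J_1,\ldots,\pm J_n)$ and the first-order invariants of the singularity. The main obstacle, in my view, is the orientation bookkeeping in the second step: one must verify that, under the simultaneous $\GL_2(\R)$-action on the data $(\pm J_1,\ldots,\pm J_n)$, the sign ambiguities are coupled (all flipped together or none at all) rather than independent, in consistency with the description of $\ADiff_0^1(\Complex)_\pm^n$ in Remark~\ref{rem:infinitejets}. Once this is established, the identification of the two invariants is immediate from the formula $\mu_i = b_i/\bar a_i$ of Theorem~\ref{theorem:firstOrderInv}.
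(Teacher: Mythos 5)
Your proposal is correct and follows essentially the same route as the paper: identify $\T_QN^2$ with $\Complex$ via $\diff_Q\phi_1$, write each $J_i$ as the conjugate of $J_{st}$ by the $1$-jet of the gluing map $\phi_{1,i}$, and match the residual $\GL_2(\R)$-symmetry of the normalized tuple with the $\ADiff_0^1(\Complex)^n_\pm$-action of Proposition~\ref{firstOrderInavriants}. The paper compresses the orientation and sign bookkeeping into ``it is easy to see''; your explicit check that the sign ambiguities are coupled through the orientation normalization, and your computation of the induced action $\mu_i \mapsto (c/\bar c)\mu_i$ and $\mu_i \mapsto (c/\bar c)\bar\mu_i$, is precisely the content of that step.
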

 \begin{proof}
 From diagram \eqref{diag6}, we have
$$
J_i = (\diff_Q\phi_i)^{-1}  \circ J_{st} \circ \diff_Q\phi_i,
$$
where $\phi_i \colon (N^2, Q) \to (\Complex, 0)$ is the normal chart corresponding to the $i$'th singular point. 
Since we are only interested in complex structures $(J_1, \dots, J_n)$ up to simultaneous conjugation, we may replace them by complex structures on $\Complex$ defined by
$$
\tilde J_i :=  \diff_Q\phi_1 \circ J_{i} \circ  (\diff_Q\phi_1)^{-1}.
$$
Then we have
\begin{align}\label{csgm}
\tilde J_1 = J_{st}, \quad \tilde J_i =\diff_0\phi_{1,i} \circ J_{st} \circ  (\diff_0\phi_{1,i})^{-1},
\end{align}
where $\phi_{1,i}$ are the gluing maps. Now it is easy to see that the $\ADiff_0^1(\Complex)^n$ action on differentials of the gluing maps corresponds to simultaneous conjugation of $\tilde J_i $'s and changing their signs. But this means that the invariants of $(\pm J_1, \dots, \pm J_n)$ are exactly the invariants of the $\ADiff_0^1(\Complex)^n$ action on $\Diff^1_0(\Complex)^{n-1}$, i.e. first order invariants.
 \end{proof}
  \begin{remark}
  For double pinched focus-focus singularities, the only first order invariant is $\mu = |\mu_2| \in [0,1)$ (see Theorem \ref{theorem:firstOrderInv} and Section \ref{sec:dpinch} below), while the only invariant of a pair $J_1, J_2$ of complex structures is the trace of $ J_2 J_1^{-1}$. (We get rid of the ambiguity in the choice of signs  by requiring that $J_1$ and $J_2$ define the same orientation on  $\T_QN^2$.) The relation between these invariants is as follows:
\begin{align}\label{trmu}
 \tr (J_2 J_1^{-1}) = 2\cdot\frac{1 + \mu^2}{1 - \mu^2}.
\end{align}
 There are also similar formulas for general $n$, with $|\mu_i|$ instead of $\mu$ in the right-hand side. However, for $n > 2$, the absolute values of $\mu_i$'s do not form a complete set of first order invariants (while the traces of ratios do not form a complete set of invariants for $n$-tuples of complex structures), so there are additional, more complicated, relations.
 \end{remark}
 

 In the remaining part of this section we explain how to construct complex structures on the tangent space to the base of a focus-focus fibration without referring to the classification of liftable diffeomorphisms. This geometric construction will be useful later on, in the discussion of the multidimensional case (see Section \ref{sec:example}).\par 
As above,  let $\F \colon (M^4, P) \to (N^2, Q) $ be a germ of a smooth map with a focus-focus singular point at $P$. Consider the Hessian of the map $\F$ at $P$. This is a symmetric bilinear form $$\diff^2_P\F \colon \T_PM^4 \times \T_PM^4 \to \T_QN^2.$$
\begin{proposition}
There exists a unique, up to sign, complex structure $J$ on $\T_QN^2$ such that $\diff^2_P\F $ becomes a complex bilinear form for a suitable choice of a complex structure on $\T_PM^4$.  The complex structures $\pm J$  with this property coincide with the ones constructed by pulling back the canonical complex structure on $\Complex$ by means of a normal chart.
\end{proposition}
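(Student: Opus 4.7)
My plan is to reduce the proposition to linear algebra by passing to normal coordinates $(u,v)$ near $P$ and $z$ near $Q$ in which $\F(u,v)=uv$, and analyzing the Hessian $B := \diff^2_P \F$ purely as a $\T_QN^2$-valued symmetric bilinear form on $\T_PM^4$. (Since focus-focus points have rank zero, $\diff_P\F=0$, so $B$ is intrinsically well-defined.) In normal coordinates $B((u_1,v_1),(u_2,v_2))=u_1v_2+u_2v_1$, which is manifestly $\Complex$-bilinear with respect to the standard complex structures on $\T_PM^4\cong\Complex^2$ and $\T_QN^2\cong\Complex$. This settles existence, and the resulting complex structure on $\T_QN^2$ is by construction the pullback of the standard complex structure on $\Complex$ via $\diff_Q\phi$, i.e.\ exactly the complex structure defined by diagram~\eqref{diag6}. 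So once uniqueness is proved, the identification with $\pm J$ from the normal-chart construction is automatic.

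For uniqueness I would introduce the pencil of scalar quadratic forms $B_\xi(X):=\xi(B(X,X))$ on $\T_PM^4$ parametrized by covectors $\xi\in(\T_QN^2)^*$. A direct computation in normal coordinates, writing $\xi=\xi_1\,dx+\xi_2\,dy$ with $z=x+\i y$ and using $B(X,X)=2uv$, gives
$$\det B_\xi = c\,(\xi_1^2+\xi_2^2)^2$$
for some nonzero constant $c$; equivalently the discriminant of the pencil is the square of a positive definite quadratic form on $(\T_QN^2)^*$. Consequently the complex zero locus $\Sigma^{\Complex}=\{\xi_1=\pm\i\xi_2\}\subset(\T_QN^2)^*\otimes\Complex$ is a pair of complex conjugate lines, and these two lines are exactly the $(\pm\i)$-eigenspaces of the complex structure $\pm J$ produced above.

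Now let $J'$ be any complex structure on $\T_QN^2$ for which there is an $I'$ on $\T_PM^4$ satisfying $B(I'X,Y)=J'B(X,Y)$. This identity yields $B_{(J')^*\xi}(X,Y)=B_\xi(I'X,Y)$, hence $\det B_{(J')^*\xi}=(\det I')\,\det B_\xi$, so $\Sigma^{\Complex}$ is invariant under the $\Complex$-linear extension of $(J')^*$. Thus $(J')^*$ either preserves each of the two conjugate lines or swaps them. A swap is impossible: if $e,\bar e$ span the two lines and $(J')^*(e)=\alpha\bar e$, then reality forces $(J')^*(\bar e)=\bar\alpha\,e$ and therefore $((J')^*)^2(e)=|\alpha|^2 e$, which cannot equal $-e$. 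So each line is an eigenline of $(J')^*$ with eigenvalue $\pm\i$, forcing $J'=\pm J$. The only substantive technical step is the discriminant computation; the remainder is a short piece of linear algebra.
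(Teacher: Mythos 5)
Your proof is correct, and the uniqueness argument is genuinely different from the paper's. The paper proves uniqueness by decomposing $\T_PM^4=V_1\oplus V_2$ into the two maximally isotropic planes of $\diff^2_P\F$ (the tangent planes to the two branches of the fiber), forming the operators $R_{\xi\eta}=D_\eta\circ D_\xi^{-1}$ on $\T_QN^2$ for $\xi,\eta\in V_1$, observing that these are non-scalar when $\xi,\eta$ are independent and must commute with any admissible complex structure, and invoking the fact that a non-scalar operator on a plane commutes with at most two complex structures, which differ by sign. You instead look at the pencil of scalar quadratic forms $\xi\circ B$ on $\T_PM^4$ and show that its discriminant is the square of a definite form, so that its complexified zero locus is a pair of conjugate lines in $(\T_QN^2)^*\otimes\Complex$ --- precisely the eigenlines of $J^*$ --- and that this locus is preserved by the dual of any admissible $J'$; your determinant computation checks out ($\det B_\xi$ is a constant multiple of $(\xi_1^2+\xi_2^2)^2$, being the square of the determinant of a $2\times 2$ off-diagonal block), and the swap-versus-preserve dichotomy is handled correctly via $((J')^*)^2=-\mathrm{Id}$. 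The paper's route is more geometric (the objects $V_1,V_2$ and $R_{\xi\eta}$ are intrinsically attached to the singularity), while yours is a classical pencil-of-quadrics argument that encodes the invariant as a pair of conjugate lines, i.e.\ essentially as a conformal structure on $\T_QN^2$; this makes the ``unique up to sign'' conclusion and the identification with the normal-chart structure quite transparent, at the cost of one explicit coordinate computation where the paper instead must verify that $R_{\xi\eta}$ is non-scalar.
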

\begin{proof}
Existence follows the fact that in suitable coordinates $\F$ becomes a holomorphic map (this also shows that the corresponding complex structures coincide with the ones constructed by means of a normal chart), while uniqueness can be demonstrated as follows. Using the normal form $(u,v) \mapsto uv$ of $\F$, one easily shows that there is unique, up to permutation of summands, decomposition 
$
\T_PM^4 = V_1 \oplus V_2,
$
where the spaces $V_1$ and $V_2$ are $2$-dimensional and maximally isotropic with respect to $\diff^2_P\F $. (Geometrically, $V_1$ and $V_2$ are tangent planes to the fiber of $\F$ at $P$.) Furthermore, for any $\xi \in V_1$, $\xi \neq 0$, the mapping
$$
D_\xi := \diff^2_P\F (\xi, *) \colon V_2 \to \T_QN^2
$$
is an isomorphism, so for any $\xi, \eta \in V_1$, $\xi \neq 0$, there is a well-defined operator
$$
R_{\xi\eta} := D_\eta \circ D_\xi^{-1} \colon \T_QN^2 \to \T_QN^2.
$$
Notice that if $\xi$ and $\eta$ are linearly independent, then the operator $R_{\xi\eta} $ cannot be scalar. (Otherwise $V_2$ is not maximal isotropic.) 
At the same time, if $\diff^2_P\F $ is a complex bilinear form, then $R_{\xi\eta} $ commutes with the complex structure on $\T_QN^2$. 
 But a non-scalar operator on a two-dimensional vector space commutes with at most two complex structures, which differ by sign.  So, there is at most two (in fact, exactly two by the existence part) complex structures on $\T_QN^2$ for which $\diff^2_P\F $ is complex bilinear, as desired.
\end{proof}
 \subsection{Classification of double pinched focus-focus singularities}\label{sec:dpinch}
 In this section we classify (generic) double pinched focus-focus singularities up to diffeomorphisms. First of all, for $n=2$, one can reformulate Proposition \ref{firstOrderInavriants} in the following way:
 
  \begin{corollary}\label{representative2p}
The orbit of any element $a z + b \bar z \in \Diff^1_0(\Complex)^+$ under the $\ADiff_0^1(\Complex)^2_\pm$-action has a unique representative of the form $z + \mu \bar z$, where $\mu \in \R$, $0 \leq \mu < 1$. The number $\mu$ is given by $\mu = |\frac{b}{a}|$.
 \end{corollary}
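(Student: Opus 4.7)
The plan is to deduce this directly from Proposition \ref{firstOrderInavriants} specialized to $n=2$, using the residual symmetry to eliminate the remaining ambiguity.

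First, I would apply the existence part of Proposition \ref{firstOrderInavriants} (with $n=2$, so $n-1 = 1$): any element $az + b\bar z \in \Diff^1_0(\Complex)_+$ can be brought, via the $\ADiff_0^1(\Complex)^2_\pm$-action, to the form $z + \mu\bar z$ where $\mu = b/\bar a \in \Complex$ and $|\mu| < 1$ (the last inequality coming from the orientation-preserving condition $|a| > |b|$). So in particular $|\mu| = |b|/|a|$, which will give the claimed formula once $\mu$ is made non-negative real.

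Next, I would invoke the uniqueness part of Proposition \ref{firstOrderInavriants} to identify the residual ambiguity. For $n=2$ there is a single number $\mu_1 = \mu$, so the statement reduces to: the representative $z + \mu\bar z$ is unique up to the transformations $\mu \mapsto c\bar c^{-1} \mu$ (for $c \in \Complex^*$) and $\mu \mapsto \bar\mu$. Since $c\bar c^{-1}$ ranges over the whole unit circle as $c$ varies, the orbit of $\mu$ under these operations is exactly the set $\{e^{i\theta}\mu : \theta \in \R\} \cup \{e^{i\theta}\bar\mu : \theta \in \R\}$, which is the full circle of radius $|\mu|$ centered at $0$.

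Then I would observe that this orbit meets the ray $[0,1) \subset \R$ in exactly one point, namely $|\mu|$. Choosing the unique phase $\theta$ with $e^{i\theta}\mu = |\mu|$ gives the representative $z + |\mu|\bar z$, proving existence; and any two real non-negative $\mu, \mu' \in [0,1)$ lying in the same orbit must satisfy $|\mu| = |\mu'|$ and hence $\mu = \mu'$, proving uniqueness. Substituting $|\mu| = |b/\bar a| = |b/a|$ completes the proof. The argument is essentially mechanical once Proposition \ref{firstOrderInavriants} is in hand; the only thing to watch is correctly identifying how the circle-group action and conjugation combine into a single orbit on the unit disk.
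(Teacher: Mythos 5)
Your proposal is correct and matches the paper's approach: the paper presents this statement as a direct reformulation of Proposition \ref{firstOrderInavriants} for $n=2$, exactly as you do, with the residual circle action and conjugation on $\mu$ collapsing to the single invariant $|\mu|=|b/a|$ in $[0,1)$.
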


Thus, we obtain a function
$
 \mu \colon  \Diff_0(\Complex)_+\! \to [0,1)
$
 invariant under the action of $\ADiff_0(\Complex)^2_\pm$. Explicitly, this function reads
  \begin{align}
 \mu(\phi) =\left| \frac{{\partial \phi}/{\partial\bar z}(0)}{{\partial\phi}/{\partial   z}(0)}\right| \in  [0,1).
  \end{align}
For a double-pinched focus-focus singularity $\F$, we define $\mu(\F) := \mu(\phi)$  where $\phi := \phi_{1,2}$ is the corresponding orientation-preserving gluing map. 
\begin{corollary}
If double-pinched focus-focus singularities $\F$ and $\tilde \F$ are diffeomorphic, then $\mu(\F) = \mu(\tilde \F)$.
\end{corollary}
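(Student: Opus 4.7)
The plan is to reduce the statement to the already-established invariance of $\mu$ on the space of $1$-jets. By Theorem~\ref{thm:thm1}, if $\F$ and $\tilde\F$ are fiberwise diffeomorphic double-pinched focus-focus singularities, then for suitable choices of orientation-preserving gluing maps $\phi = \phi_{1,2}$ and $\tilde\phi = \tilde\phi_{1,2}$ (using Remark~\ref{rem:infinitejets} to stay in $\Diff_0(\Complex)_+$) there exist liftable germs $\psi_1,\psi_2 \in \ADiff_0(\Complex)_\pm$ such that
\begin{equation}
\tilde\phi = \psi_1 \circ \phi \circ \psi_2^{-1}.
\end{equation}

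Next, I would take $1$-jets of both sides. This yields the analogous relation in $\Diff_0^1(\Complex)_+$ under the induced $\ADiff_0^1(\Complex)_\pm^2$-action defined by \eqref{gaugeAction}, since taking $1$-jets is a group homomorphism intertwining the two actions. Writing the $1$-jet of $\phi$ as $az+b\bar z$ (so that $a = \partial\phi/\partial z(0)$ and $b=\partial\phi/\partial\bar z(0)$, with $|a|>|b|$ by orientation-preservation), and similarly for $\tilde\phi$, I would invoke Corollary~\ref{representative2p}: both $1$-jets reduce, under the $\ADiff_0^1(\Complex)_\pm^2$-action, to a unique representative $z + \mu\bar z$ with $\mu = |b/a| \in [0,1)$. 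Since the orbits of the two $1$-jets coincide, the canonical representatives must agree, i.e. $\mu(\phi) = \mu(\tilde\phi)$, which is precisely $\mu(\F) = \mu(\tilde\F)$.

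There is no real obstacle here beyond the bookkeeping of signs/orientations; the essential content has already been packaged in Theorem~\ref{thm:thm1} (which identifies smooth equivalence with the gluing-map action) and in Proposition~\ref{firstOrderInavriants}/Corollary~\ref{representative2p} (which establishes that $\mu$ is a well-defined function on orbits in $\Diff_0^1(\Complex)_+$). The only point meriting a line of comment is why it is enough to look at $1$-jets: taking $k$-jets is a homomorphism of groups respecting the liftability condition (jets of germs divisible by $z$ or $\bar z$ are themselves divisible by $z$ or $\bar z$), so any invariant of the $1$-jet action automatically lifts to an invariant of the full germ action, giving the desired implication.
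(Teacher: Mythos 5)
Your proof is correct and follows exactly the route the paper intends: the corollary is stated without proof precisely because it is immediate from Theorem~\ref{thm:thm1} (diffeomorphic singularities have gluing maps in the same $\ADiff_0(\Complex)^2_\pm$-orbit) together with the invariance of $\mu$ under that action established via Corollary~\ref{representative2p}. Your added remarks on orientation bookkeeping and on why passing to $1$-jets suffices are accurate and match Remark~\ref{rem:infinitejets}.
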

It turns out, that the converse result is also true, provided that $\mu \neq 0$. In other words, the space of double pinched focus-focus singularities is generically one-dimensional:
\begin{theorem}\label{thm:c1tocinfty} 
Assume that double-pinched focus-focus singularities $\F$ and $\tilde \F$ are such that $\mu(\F) = \mu(\tilde \F) \neq 0$. Then $\F$ and $\tilde \F$ are diffeomorphic. \end{theorem}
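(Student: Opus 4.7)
My plan is to reduce to a formal-power-series problem and then match $\phi$ to $\tilde\phi$ by induction on the degree. By Theorem \ref{thm:thm1}(3) it is enough to show that if $\phi,\tilde\phi\in\Diff_0^\infty(\Complex)_+$ have the same nonzero $\mu$-invariant, then there exist liftable $\psi_1,\psi_2$ with $\tilde\phi=\psi_1\circ\phi\circ\psi_2^{-1}$. I first use Corollary \ref{representative2p} to bring both $1$-jets to the canonical form $\phi_0:=z+\mu\bar z$ with $\mu\in(0,1)\subset\R$, and then look for $\psi_1,\psi_2$ in the form $\psi_i=z+\eta_i^{(2)}+\eta_i^{(3)}+\cdots$, where each $\eta_i^{(d)}$ is a homogeneous polynomial of degree $d$ divisible by $z$; by Theorem~\ref{liftable} these are exactly the orientation-preserving liftable perturbations of the identity.

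For the inductive step let $V_d$ denote the space of degree-$d$ homogeneous polynomials in $z,\bar z$ divisible by $z$, and $W_d$ the space of all degree-$d$ homogeneous polynomials. Suppose $\eta_i^{(k)}$ have been chosen for $k<d$ so that $\psi_1\circ\phi\circ\psi_2^{-1}$ agrees with $\tilde\phi$ modulo order $d$, and write $p^{(d)}\in W_d$ for the degree-$d$ obstruction. A direct expansion of the composition shows that the degree-$d$ effect of adding $\eta_1^{(d)},\eta_2^{(d)}$ is exactly the linearization of the action at $\phi_0$, namely
\[
L(\eta_1,\eta_2) := \eta_1(z+\mu\bar z)-\eta_2(z)-\mu\,\overline{\eta_2(z)}.
\]
Thus the entire theorem reduces to proving that $L\colon V_d\oplus V_d\to W_d$ is surjective for every $d\ge 2$ and every $\mu\in(0,1)$.

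Establishing this surjectivity is the main obstacle and the point at which $\mu\neq 0$ is actually used. I split $L=L_1-L_2$ with $L_1(\eta_1):=\eta_1(z+\mu\bar z)$ and $L_2(\eta_2):=\eta_2+\mu\bar\eta_2$. For $\eta_2=\sum_{k=1}^d a_k z^k\bar z^{d-k}\in V_d$ one computes
\[
L_2(\eta_2)=a_d z^d+\mu\bar a_d\bar z^d+\sum_{k=1}^{d-1}(a_k+\mu\bar a_{d-k})\,z^k\bar z^{d-k},
\]
and pairing the coefficient at $z^k\bar z^{d-k}$ with that at $z^{d-k}\bar z^k$ yields $2\times 2$ linear systems of determinant $1-\mu^2\neq 0$ solvable for $(a_k,a_{d-k})$. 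Hence the image of $L_2$ is exactly the real-codimension-$2$ linear subspace $H=\{p\in W_d : p_0=\mu\,\overline{p_d}\}$, where $p_k$ denotes the coefficient of $z^k\bar z^{d-k}$ in $p$. To cover the two missing real directions I feed $\eta_1=cz\bar z^{d-1}$, with $c=x+iy\in\Complex$ a free parameter, into $L_1$ and obtain
\[
p_0(L_1\eta_1)-\mu\,\overline{p_d(L_1\eta_1)}=c\mu-\bar c\mu^d=x(\mu-\mu^d)+iy(\mu+\mu^d).
\]
Since both factors $\mu\pm\mu^d$ are nonzero for $d\ge 2$ and $\mu\in(0,1)$, the right-hand side sweeps out all of $\Complex$ as $(x,y)$ varies in $\R^2$, so $L_1(V_d)$ is transverse to $H$ and $L_1(V_d)+L_2(V_d)=W_d$.

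Assembling the inductive steps produces formal liftable series $\psi_1,\psi_2\in\ADiff_0^\infty(\Complex)_+$ with $\psi_1\circ\phi\circ\psi_2^{-1}=\tilde\phi$, and Theorem \ref{thm:thm1}(3) promotes this formal equivalence to a genuine smooth fiberwise diffeomorphism of the two focus-focus singularities. The hypothesis $\mu\neq 0$ is indispensable exactly at the transversality step: when $\mu=0$ the expression $c\mu-\bar c\mu^d$ vanishes identically, the $\bar z^d$-direction never enters the image of $L$, and infinitely many higher-order invariants must survive, consistent with the expectation that the unpinched (standard-product) case $\mu=0$ is more rigid than the generic one.
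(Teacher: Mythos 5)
Your proof is correct, but it follows a genuinely different route from the paper's. The paper proves the key Lemma \ref{lemma:c1tocinfty} by a direct, non-inductive construction at the level of actual smooth germs: it reduces to solving $z g(z)+\mu\bar z\overline{g(z)}=\phi(z)h(z)$, writes $\phi(z)=z+\mu\bar z+zu+\mu\bar z v$ with $u(0)=v(0)=0$, uses that $\phi$ and $\bar\phi$ generate the maximal ideal of $\Cont^\infty_0(\Complex,\Complex)$ to expand $u$ and $v$, and then exhibits $g$ and $h$ in closed form; since the whole argument lives in $\Diff_0(\Complex)$, the paper never needs to pass through jets for this step. You instead work in $\Diff_0^\infty(\Complex)$ and solve the homological equation degree by degree, which reduces everything to surjectivity of the linearized action $L(\eta_1,\eta_2)=\eta_1(z+\mu\bar z)-\eta_2-\mu\bar\eta_2$ on each space $W_d$; your computation of the image of $L_2$ (codimension-two subspace $p_0=\mu\overline{p_d}$, using $1-\mu^2\neq 0$) and the transversality of $L_1(V_d)$ via $c\mu-\bar c\,\mu^d$ are both correct, and the final appeal to Theorem \ref{thm:thm1}(3) legitimately converts the formal conjugacy into a smooth one. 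The trade-off: your argument is a standard normal-form scheme that makes the role of $\mu\neq 0$ completely transparent (it is exactly the nondegeneracy of the two $2\times 2$ systems and of the map $c\mapsto c\mu-\bar c\mu^d$), whereas the paper's argument is shorter, avoids induction, and produces the conjugating liftable germs explicitly without invoking Borel's theorem or the jet formalism. One cosmetic caveat: your closing description of $\mu=0$ as "the unpinched (standard-product) case" is misleading — $\mu=0$ is the locus where the $1$-jet of the gluing map is complex-linear, which contains infinitely many distinct orbits with the trivial singularity only in their closure (cf.\ the paper's remark after Lemma \ref{lemma:c1tocinfty}) — but this does not affect the proof.
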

The proof is based on the corresponding algebraic statement:
\begin{lemma}\label{lemma:c1tocinfty} 
Assume that $\phi, \tilde \phi \in \Diff_0(\Complex)_+ $ are such that $\mu(\phi) = \mu(\tilde \phi) \neq 0$. Then $\phi$ and $\tilde \phi$ belong to the same orbit of $\ADiff_0(\Complex)^2_\pm$-action.
\end{lemma}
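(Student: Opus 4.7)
The plan is to reduce the statement to a formal power series problem and then to solve the resulting cohomological equation order by order. By Theorem~\ref{thm:thm1}(3), orbit equivalence under $\ADiff_0(\Complex)^2_\pm$ is detected at the level of $\infty$-jets, so it suffices to match the Taylor series of $\phi$ and $\tilde\phi$ formally. Moreover, by Corollary~\ref{representative2p} applied to the $1$-jets, acting by $1$-jets of linear liftable diffeomorphisms allows one to assume that $\phi$ and $\tilde\phi$ have the same $1$-jet $T(z):=z+\mu\bar z$ with $0<\mu<1$. The problem then reduces to showing that every formal diffeomorphism of the form $\phi = T+R$, with $R$ of order $\ge 2$, lies in the orbit of $T$ under the orientation-preserving liftable action.

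To achieve this, I would look for $\psi_1,\psi_2\in\ADiff_0^\infty(\Complex)_+$ tangent to the identity, say $\psi_i(z)=z+C_i(z,\bar z)$ with $C_i$ a formal series divisible by $z$, satisfying $\psi_1\circ\phi = T\circ\psi_2$. Collecting homogeneous terms of degree $N\ge 2$ in this identity, the equation at each order takes the shape
\[
B_N(z,\bar z) + \mu\,\overline{B_N(z,\bar z)} - A_N\bigl(T(z),\overline{T(z)}\bigr) = R'_N(z,\bar z),
\]
where $A_N,B_N$ are the degree-$N$ parts of $C_1,C_2$ (both divisible by $z$), and $R'_N$ is a polynomial depending only on $R$ and on $A_M,B_M$ with $M<N$, already fixed at earlier induction steps. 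Provided this equation is solvable at every order, Borel's theorem realizes the formal pair $(\psi_1,\psi_2)$ as a pair of genuine smooth germs in $\ADiff_0(\Complex)_+$, finishing the proof.

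The technical heart of the argument, and the only place where $\mu\neq 0$ enters, is the solvability of the above cohomological equation. I would split the $N+1$ coefficients of the monomials $z^{N-k}\bar z^k$ into the \emph{bulk} $1\le k\le N-1$ and the two \emph{extreme} monomials $z^N$ and $\bar z^N$, and first set $a_{j,N-j}=0$ for $1\le j\le N-1$. In the bulk, the equation then involves only $B_N$ and couples the pairs of coefficients $b_{j,N-j}$ and $\overline{b_{N-j,j}}$ by a $2\times 2$ linear system of determinant $1-\mu^2\ne 0$ (the diagonal case $j=N-j$ is analogous). After these choices $A_N = a_{N,0}z^N$ contains a single complex unknown, and the two extreme equations form an $\R$-linear system in $a_{N,0},b_{N,0}$ whose real determinant comes out as a nonzero multiple of $\mu^2(1-\mu^{2N-2})$. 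This is precisely where $\mu\neq 0$ is essential: when $\mu=0$, the coefficient of $\bar z^N$ on the left-hand side identically vanishes, so the pure antiholomorphic terms of $R'_N$ become genuine obstructions to solvability, producing infinitely many additional formal invariants. Verifying these solvability facts is the main obstacle; the induction in $N$ and the passage from formal to smooth via Borel's theorem are routine.
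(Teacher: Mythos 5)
Your argument is correct, but it follows a genuinely different route from the paper's. The paper solves the problem in one shot at the level of smooth germs: writing $\psi_1(z)=zf(z)$, $\psi_2(z)=zg(z)$, it reduces the orbit equation to $z g(z)+\mu\bar z\overline{g(z)}=\phi(z)h(z)$, decomposes $\phi=z+\mu\bar z+zu+\mu\bar z v$ (this division by $\mu$ is where $\mu\neq 0$ enters there), expands $u,v$ in the ideal generated by $\phi$ and $\bar\phi$, and then writes down explicit closed-form solutions $g$ and $h$ --- no induction, no formal series, no Borel theorem. Your proof instead runs the standard normal-form machine: reduce to $\infty$-jets via Theorem~\ref{thm:thm1}(3), normalize the $1$-jet to $T(z)=z+\mu\bar z$ by linear liftable germs, and solve the homological equation degree by degree; the bulk $2\times2$ systems with determinant $1-\mu^2$ and the extreme system with determinant $\mu^{2N}-\mu^2$ are computed correctly, and the passage back to smooth germs via Borel plus Remark~\ref{rem:adm} (a smooth germ is divisible by $z$ iff its Taylor series is) is legitimate. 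What your approach buys is transparency: it isolates exactly where $\mu\neq0$ is needed (invertibility of $a\mapsto\mu\bar a-\mu^N a$) and explains the infinitely many formal obstructions at $\mu=0$ noted in the paper's remark; what it costs is reliance on the jet-determinacy statement and Borel's theorem, which the paper's direct construction avoids. One small presentational slip: after you choose $A_N=a_{N,0}z^N$, the term $a_{N,0}(z+\mu\bar z)^N$ does feed back into the bulk monomials, so the bulk equations are not literally ``only in $B_N$''; but since the extreme $2\times2$ real system determines $a_{N,0},b_{N,0}$ independently of the bulk coefficients of $B_N$, and the bulk systems are invertible for any right-hand side, the full system is triangular and your induction goes through after this trivial reordering.
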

\begin{figure}[t]
\centerline{
\begin{tikzpicture}[thick, scale = 2]
\node at (0,0) () {\begin{tikzpicture}[thick, scale = 2]
\pgftransformxslant{1}
  \draw (0,0.1) -- (2,0.1) -- (2,0.9) -- (0,0.9) -- cycle;
     \fill (1,0.5) circle [radius=1.5pt];
     \pgftransformxscale{1.5}
\foreach \a in {1,2,3,4,5,6}{
\draw [thin] (0.66,0.5) to [bend right]+(\a*360/6: 0.3cm);
}
\end{tikzpicture}
};
\node at (0,0.7) () {\begin{tikzpicture}[thick, scale = 2]
\pgftransformxslant{1}
  \fill [white, draw = black] (0,0.1) -- (2,0.1) -- (2,0.9) -- (0,0.9) -- cycle;
\end{tikzpicture}
};
\draw [dotted] (1.1,0) -- (2.85,0);
\draw [very thick] (3,0) -- (3,1.33);
\draw [dotted] (1.1,0.7) -- (2.95,0.7);
\node at (3.2, 0) () {$0$};
\node at (3.2, 1.3) () {$1$};
\node at (3.15, 0.7) () {$\mu_0$};
\draw [very thick] (2.9,0) -- (3.1,0);
\draw  [very thick]  (2.9,1.3) to [bend left](3.1,1.3);
\end{tikzpicture}
}
\caption{ The $\mu$-invariant of double-pinched focus-focus singularities takes values in $[0,1)$. All singularities at the fixed level $\mu = \mu_0 \neq 0$ are pairwise diffeomorphic. The level $\mu = 0$ contains infinitely many diffeomorphism classes each containing the ``trivial'' singularity, i.e. the singularity whose gluing map is the identity, in its closure. }\label{fig:doubleff}
\end{figure}
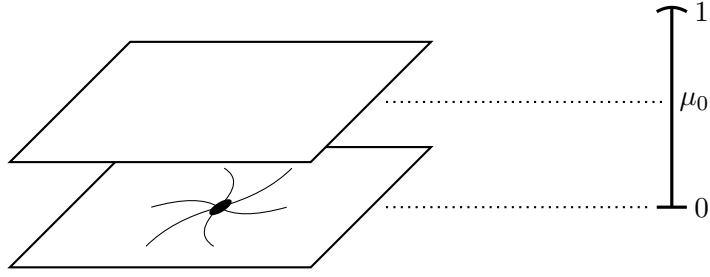
\begin{remark}
For $\mu(\phi) = \mu(\tilde \phi) = 0$ it is not necessarily true that $\phi$ and $\tilde \phi$ belong to the same orbit. For example, it is easy to see that all germs of the form $\phi = z + \bar z^k \in \Diff_0(\Complex)_+$ belong to different orbits. On the other hand, any $\infty$-jet $\phi \in \Diff_0^\infty(\Complex)_+ $ with $\mu(\phi) = 0$ contains the ``trivial'' jet $\phi_0 = z$ in its $\ADiff_0^\infty(\Complex)^2_\pm$-orbit closure. (Here we endow the space of $\infty$-jets with its natural Fr\'{e}chet topology.) Indeed, for any such $\phi$ and any $c \in \Complex^*$ we have
$$
\lim_{c \to \infty }cz \circ  \phi \circ c^{-1}z = z.
$$
%
This means that there exist no continuous invariants which distinguish between the orbits at the level $\mu = 0$, and the orbit space $ \Diff_0^\infty(\Complex)_+  \, / \, \ADiff_0^\infty(\Complex)^2_\pm$ (i.e. the space of double-pinched focus-focus singularities up to smooth equivalence) is non-Hausdorff. See Figure \ref{fig:doubleff}. 
\end{remark}
\begin{proof}[Proof of Lemma \ref{lemma:c1tocinfty}]
It suffices to show that if $\mu := \mu(\phi) \neq 0$, then $\phi$ lies in the same orbit as the linear function $z + \mu \bar z$. In other words, there exist liftable diffeomorphisms $\psi_1$, $\psi_2$ such that
\begin{align}\label{desired}
  ( z + \mu \bar z) \circ \psi_2 = \psi_1 \circ \phi.
\end{align}
We look for orientation-preserving liftable $\psi_1$, $\psi_2$. (Note that since the function $ z + \mu \bar z$ commutes with complex conjugation, existence of liftable $\psi_1$, $\psi_2$ satisfying~\eqref{desired} is equivalent to existence of orientation-preserving liftable $\psi_1$, $\psi_2$ with the same property.) By Theorem \ref{liftable} this means that $\psi_1(z) = zf(z)$, $\psi_2(z) = zg(z)$, where $f, g\in \Cont^\infty_0(\Complex , \Complex)$ are such that $f(0) \neq 0$ and $g(0) \neq 0$. In terms of the functions $f$, $g$, equation \eqref{desired} reads
\begin{equation}
 z g(z)+ \mu { \bar z   {\bar g(z)}}= \phi(z) f ( \phi(z)).
\end{equation}
We show that this equation has a solution $f, g \in \Cont^\infty_0(\Complex , \Complex)$  with $f(0) \neq 0$ and $g(0) \neq 0$. 
 Since $\phi$ is a diffeomorphism, this is equivalent to finding $g, h \in \Cont^\infty_0(\Complex , \Complex)$ with $g(0) \neq 0$ and $h(0) \neq 0$ such that
 \begin{equation}\label{desired3}
 z g(z)+ \mu {  \bar z   {\bar g(z)}}= \phi(z) h(z).
\end{equation}
Without loss of generality we can assume that
$
\phi(z) = z + \mu \bar z + \dots,
$
where  the dots denote higher order terms. (Indeed, by Corollary \ref{representative2p}, every orbit has a representative of this form.)
Then, since $\mu \neq 0$, one can write $\phi$ as
$$
\phi(z) = z + \mu \bar{ z} + zu(z) + \mu\bar zv(z),
$$
where $u, v  \in \Cont^\infty_0(\Complex , \Complex)$ are such that $u(0) = v(0) = 0$. Further, since $\phi$ is a diffeomorphism, the ideal generated in $\Cont^\infty_0(\Complex , \Complex)$ by $\phi$ and $\bar \phi$ is precisely $\{ w \in \Cont^\infty_0(\Complex , \Complex) \mid w(0) = 0\}$. This allows us to write $u$, $v$ as
$$
u(z) = \phi(z) u_1(z) + \bar \phi(z) u_2(z),\quad  v(z) = \phi(z) v_1(z) + \bar \phi(z) v_2(z),
$$ 
where $u_1, u_2, v_1, v_2 \in \Cont^\infty_0(\Complex , \Complex)$.
Then a straightforward substitution shows that the functions
$$
g := 1 + \phi \bar v_2 + \bar \phi u_2, \quad h := 1 + z(\bar v_2 - u_1) + \mu \bar z(\bar u_2 - v_1)
$$
solve \eqref{desired3}. Thus, Lemma \ref{lemma:c1tocinfty} is proved, and Theorem \ref{thm:c1tocinfty} follows.
\end{proof}

\begin{remark}\label{rem:npinchedstab}  We believe that a statement similar to that of Theorem~\ref{thm:c1tocinfty} is true for $n$-pinched singularities as well. Namely, we believe that for generic $n$-pinched focus-focus singularities the number of smooth invariants is finite, and $\Cont^\infty$-classification of such singularities can be reduced to $\Cont^k$-classification for certain $k = k(n)$. So far, we were not able to prove that conjecture. However, it is not hard to show that for generic $n$-pinched focus-focus singularities the number of \textit{finite order} invariants is finite.
More precisely, we have the following result.
\begin{proposition}  For the 
$\ADiff_0^k(\Complex)^n$ action on $\Diff_0^k(\Complex)^{n-1}$ given by
\eqref{gaugeAction}, the codimension of generic orbits is a bounded function of $k$.
\end{proposition}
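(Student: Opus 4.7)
The plan is to combine upper semicontinuity of the orbit codimension with an explicit calculation at a carefully chosen ``linear'' point. The tangent space to the orbit through $\phi = (\phi_{1,2}, \dots, \phi_{1,n}) \in \Diff_0^k(\Complex)^{n-1}$ is the image of the infinitesimal action
\[
A_\phi : \mathfrak{h}^n \to \mathfrak{g}^{n-1}, \qquad (v_1, \dots, v_n) \mapsto \bigl( v_1 \circ \phi_{1,i} - D\phi_{1,i}\cdot v_i \bigr)_{i=2}^n,
\]
where $\mathfrak{g}$, $\mathfrak{h}$ denote the Lie algebras at the identity of $\Diff_0^k(\Complex)$ and $\ADiff_0^k(\Complex)$. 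Since the rank of $A_\phi$ is lower-semicontinuous in $\phi$, the codimension of the generic orbit is bounded above by $\dim \mathrm{coker}\, A_\phi$ for \emph{any} particular $\phi$; so it suffices to exhibit a single $\phi$ for which this quantity is bounded by a constant depending on $n$ alone.

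I would take the linear point $\phi_{1,i}(z) = z + \mu_i \bar z$ with pairwise distinct real $\mu_i \in (0,1)$. Since the $\phi_{1,i}$ are homogeneous of degree one, $A_\phi$ preserves the natural degree grading $\mathfrak{g} = \bigoplus_{j=1}^k \mathfrak{g}^{(j)}$ and $\mathfrak{h} = \bigoplus_{j=1}^k \mathfrak{h}^{(j)}$, and its cokernel splits as $\bigoplus_{j=1}^k \mathrm{coker}\, A_\phi^{(j)}$. A short direct computation identifies $\dim \mathrm{coker}\, A_\phi^{(j)}$ with the corank of the $\R$-linear map
\[
\Phi^{(j)} : \mathfrak{g}^{(j)} \to (\mathfrak{g}^{(j)}/\mathfrak{h}^{(j)})^n
\]
assembled from the quotient projection $\mathfrak{g}^{(j)} \to \mathfrak{g}^{(j)}/\mathfrak{h}^{(j)} \cong \Complex$ together with its $n-1$ conjugates by $\operatorname{Ad}_{\phi_{1,i}}$.

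Writing $\Phi^{(j)}$ in the basis $\{z^a \bar z^{j-a}\}_{a=0}^j$ and separating real from imaginary parts of the coordinates, its matrix decouples into two $n \times (j+1)$ blocks whose $i$-th rows (for $i \ge 2$) have entries proportional to $\mu_i^a \mp \mu_i^{j+1-a}$. The reflection symmetry $a \leftrightarrow j+1-a$ collapses the effective column count in each block to $\lfloor j/2\rfloor + 1$, but on those independent columns the matrix has a Vandermonde-type structure in the $\mu_i$ perturbed by terms of order $\mu_i^{j}$. Since the $\mu_i$ are pairwise distinct, for all $j$ larger than a threshold $j_0(n, \mu)$ both blocks attain full rank $n$, so $\Phi^{(j)}$ is surjective and $\dim \mathrm{coker}\, A_\phi^{(j)} = 0$. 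For the finitely many smaller $j$ the trivial bound $\dim \mathrm{coker}\, A_\phi^{(j)} \le 2n$ applies, so $\dim \mathrm{coker}\, A_\phi \le 2n\, j_0(n, \mu)$, a constant independent of $k$. By semicontinuity, the generic codimension inherits the same bound.

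The main obstacle is the explicit verification that the critical sub-determinants of $\Phi^{(j)}$ are nonzero for \emph{all} sufficiently large $j$, not merely as $j \to \infty$; this is a nonvanishing statement for a family of polynomials in $\mu_i^j$ whose leading term is a Vandermonde determinant, and handling it requires a careful book-keeping of the correction terms coming from the reflection symmetry.
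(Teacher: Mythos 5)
Your proposal is correct and follows essentially the same route as the paper: both arguments exhibit a single orbit of bounded codimension --- through the very same linear tuple $(z+\mu_i\bar z)$ with distinct nonzero $\mu_i$ --- and conclude because the generic orbit has maximal dimension, hence no larger codimension. The only difference is computational: the paper evaluates $\dim \mathrm{Stab}\,\xi$ directly via a closed formula in $k$ and $n$ (giving the sharp codimension $(n-1)(2n-3)$ for $k\ge 2n-1$), whereas you bound the dual quantity $\dim\mathrm{coker}\,A_\phi$ degree-by-degree via a perturbed Vandermonde matrix; your stated worry about the sub-determinants is not a real obstacle, since for fixed columns $a=0,\dots,n-1$ the corrections $\mu_i^{\,j+1-a}$ tend to $0$ as $j\to\infty$ (because $0<\mu_i<1$), so the minors converge to a nonzero Vandermonde determinant and are therefore nonzero for all sufficiently large $j$, which is exactly what is needed.
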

\begin{proof}
Consider the $n$-tuple 
$$
\xi := (z + \mu_1 \bar z, \dots, z + \mu_{n-1} \bar z) \in \Diff_0^k(\Complex)^{n-1},
$$
where all $\mu_i$'s are non-zero and such that $\mu_i \neq \mu_j$, and $\mu_i \neq 1/\bar \mu_j$ for any $i,j$. Then, explicitly computing the stabilizer of $\xi$ under the $\ADiff_0^k(\Complex)^n$ action, one can show that its dimension is given by
$$
\dim \mathrm{Stab}\, \xi = \left[
\begin{aligned}
&\frac{1}{2}k(k+1), \mbox{ for } k < 2n-1, \\
&k^2 + (3-2n)k + (n-1)(2n -3), \mbox{ for } k \geq 2n-1.
\end{aligned}
\right.
$$
Therefore, the dimension of the orbit of $\xi$ is given by
\begin{align*}
\dim \mathrm{Orb}\, \xi &=  \dim \ADiff_0^k(\Complex)^n - \dim \mathrm{Stab}\, \xi \\&=  \left[
\begin{aligned}
&(n - \frac{1}{2})k(k+1), \mbox{ for } k < 2n-1, \\
&(n-1)k^2 + (3n -1)k - (n-1)(2n -3), \mbox{ for } k \geq 2n-1,
\end{aligned}
\right.
\end{align*}
while the codimension is given by
\begin{align*}
\codim \mathrm{Orb}\, \xi &=  \dim \Diff_0^k(\Complex)^{n-1}-\dim \mathrm{Orb}\, \xi \\&=  \left[
\begin{aligned}
&-\frac{1}{2}k^2 + (2n - \frac{5}{2})k, \mbox{ for } k < 2n-1, \\
& (n-1)(2n -3), \mbox{ for } k \geq 2n-1.
\end{aligned}
\right.
\end{align*}
So, for large $k$, there is an orbit of $\ADiff_0^k(\Complex)^n$ action on $\Diff_0^k(\Complex)^{n-1}$ whose codimension is $ (n-1)(2n -3)$. Therefore, the codimension of generic orbits is less or equal to this number, as desired.
\end{proof}
\end{remark}

\section{Symplectic focus-focus singularities}\label{sec:sympl}
\subsection{Any focus-focus singularity admits a symplectic structure}\label{sec:symp1}

The following result shows that \textit{smooth} classification for \textit{symplectic} focus-focus singularities is equivalent to that for smooth focus-focus singularities.
\begin{theorem}\label{thm:symplectic}
Any focus-focus singularity admits a symplectic structure which makes the corresponding fibration Lagrangian. \end{theorem}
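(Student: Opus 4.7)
The strategy is to cover a neighborhood $U(L_Q)$ of the singular fiber by standard local pieces, put a natural Lagrangian-fibrating symplectic form on each, and then glue. Near each focus-focus point $P_i$, the normal form (Proposition~\ref{prop:norm} combined with Definition~\ref{def:smoothffpoint}) provides complex coordinates $(u_i,v_i)$ on a neighborhood $U_i$ in which $\mathcal F=u_iv_i$. On $U_i$ I would take the standard form $\omega_i := \mathrm{Re}(du_i\wedge dv_i)$; this is symplectic, and the fibers, being level sets of the holomorphic map $uv$, are Lagrangian.

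By Assumption~\ref{assumption3}, the complement of slightly shrunken neighborhoods $U_i'\subset U_i$ in $U(L_Q)$ is a disjoint union of regions $W_1,\dots,W_n$, each of which is a smooth trivial annulus fibration over a small disk $D_j\subset N^2$. On each $W_j$ I would fix a trivialization $W_j\simeq D_j\times(S^1\times\R)$ intertwined with $\mathcal F$ and pull back the canonical Lagrangian fibration structure on $T^*(D_j\times S^1)$ (restricted to an appropriate open subset of the cotangent fibers) to obtain a symplectic form $\omega_j^W$ making $\mathcal F|_{W_j}$ Lagrangian.

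The main step is to glue these forms on the overlaps $U_i\cap W_j$, which all lie in the regular part of $\mathcal F$. On such an overlap both $\omega_i$ and $\omega_j^W$ are symplectic and make the fibration Lagrangian, so $\eta := \omega_i - \omega_j^W$ is closed and vanishes on fibers. The overlap is topologically a trivial annular-region fibration over a disk, hence homotopy equivalent to a circle, so $H^2(U_i\cap W_j)=0$ and $\eta = d\beta$ for some $1$-form $\beta$. Choosing a bump function $\rho$ on $W_j$ that equals $1$ near $U_i$ and vanishes outside a slightly larger region, I would set $\omega := \omega_j^W + d(\rho\beta)$ on $W_j$ and $\omega := \omega_i$ on $U_i'$. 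On the region where $\rho\equiv 1$ the two definitions agree, so $\omega$ is a well-defined global closed $2$-form for which all fibers remain Lagrangian.

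The main obstacle is ensuring that $\omega$ remains \emph{non-degenerate}. The extra term $d\rho\wedge\beta$ can a priori destroy non-degeneracy if $\beta$ is too large. I would handle this by arranging from the start that $\omega_i$ and $\omega_j^W$ are $C^0$-close on the overlaps --- by matching the trivializations of the $W_j$ to the normal coordinates $(u_i,v_i)$ at one point on each overlap, and then shrinking $U_i$. With $\beta$ sufficiently small in $C^1$, the glued $\omega$ remains non-degenerate. A more conceptual alternative would be to apply the Moser trick on each overlap: the linear path $\omega_t = \omega_j^W + t\, d\beta$ consists of closed forms with the same Lagrangian foliation and, for small $\beta$, of symplectic forms, so it can be integrated to a fibration-preserving isotopy carrying $\omega_j^W$ to $\omega_i$.
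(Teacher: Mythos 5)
Your route is genuinely different from the paper's (which deduces the theorem from the gluing-map classification of Theorem \ref{thm:thm1}: Lemma \ref{incl} shows every orbit of gluing maps has a representative with $\mathrm{Im}\,\phi_{1,i}(z)=\mathrm{Im}\,z$, and Proposition \ref{realization} explicitly realizes such data by a Lagrangian fibration), but your gluing step has a genuine gap. With $\eta=\omega_i-\omega_j^W=d\beta$ and $\omega:=\omega_j^W+d(\rho\beta)$, the correction is $d\rho\wedge\beta+\rho\,\eta$. On a fiber the term $\rho\,\eta$ vanishes, but $d\rho\wedge\beta$ does not: the cut-off $\rho$ necessarily varies \emph{along} the fibers, since each fiber meets the overlap $U_i\cap W_j$ in an annulus across which $\rho$ must pass from $1$ to $0$. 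Hence the glued form keeps the fibers Lagrangian only if $\beta$ restricts to zero on every fiber, which you have not arranged.

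Moreover, this cannot always be arranged by a better choice of primitive. Any primitive $\beta$ of $\eta$ restricts to a closed $1$-form on each (cylindrical) fiber, with period $g(a)=\int_{S^1_a}\beta$ over the generating circle of the fiber over $a$; by Cartan's formula, $dg(v)=\int_{S^1_a}\iota_{\tilde v}\eta$, i.e.\ the difference of the derivatives of the action integrals of $\omega_i$ and $\omega_j^W$ for the vanishing cycle. Adjusting $\beta$ by a closed $1$-form changes $g$ only by a constant, so a fiberwise-vanishing primitive exists only if the actions of your two local models already agree as functions on the base --- and matching trivializations ``at one point'' or shrinking $U_i$ does not achieve this. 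The same defect undermines your Moser alternative, since the Moser field preserves the Lagrangian fibration only when its generating $1$-form vanishes on fibers. Repairing the argument requires genuinely normalizing the local models so their action coordinates match (which is, in effect, what the paper's Proposition \ref{realization} builds in by modifying the Hamiltonian $H$ while keeping the periodic integral $F$). The non-degeneracy issue you single out as the main obstacle is actually the easier of the two.
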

To begin with, recall (see Proposition \ref{prop:norm}) that for any symplectic focus-focus point, there is a ``symplectic'' version of diagram \eqref{comDiag}, namely the top arrow $\Phi \colon (M^4, P) \to (\Complex^2, 0)$  is a symplectic map. (Here we endow $\Complex^2$ with the symplectic structure  $\mathrm{Re}\, (du\wedge dv)$.) We will refer to the corresponding bottom arrow $\phi \colon (N^2, Q) \to (\Complex, 0)$ as the \textit{symplectic normal chart}. Such a normal chart is unique up to multiplying its real and imaginary parts by $-1$ and adding a flat function to the real part (see Remark \ref{rem:uniqueness}). \par
Furthermore, in the case several focus-focus points on the fiber, the imaginary part of the symplectic normal charts $\phi_i \colon (N^2, Q) \to (\Complex, 0)$ agree up to sign: $\mathrm{Im}\, \phi_i=  \pm \mathrm{Im}\, \phi_j  $ (Theorem~\ref{th:focus}), and we can choose these charts in such a way that $\mathrm{Im}\, \phi_i=   \mathrm{Im}\, \phi_j  $. Then the corresponding \textit{symplectic gluing maps} $\phi_{i,j} := \phi_i \circ \phi_j^{-1}$ satisfy  $\mathrm{Im}\, \phi_{i,j}(z) =  \mathrm{Im}\, z  $. Conversely, any diffeomorphisms with this property can be realized as symplectic gluing maps:

\begin{proposition}\label{realization}
 Let $\{\phi_{i,j}  \in \Diff_0(\Complex)\}$ be a collection of diffeomorphisms satisfying \eqref{cocycle} and such that  $\mathrm{Im}\, \phi_{i,j}(z) =  \mathrm{Im}\, z  $  for every $i,j = 1, \dots, n$. Then there exists an $n$-pinched symplectic focus-focus singularity whose symplectic gluing maps are $\phi_{i,j}$'s.
\end{proposition}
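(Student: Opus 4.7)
The plan is to construct $(M^4, \omega, \F)$ by gluing standard symplectic focus-focus models along their boundaries, using the hypothesis $\mathrm{Im}\,\phi_{i,j}(z) = \mathrm{Im}\,z$ to guarantee that the smooth identifications can be upgraded to symplectomorphisms. For each $i = 1, \dots, n$, take the standard model $U_i = \{(u_i, v_i) \in \Complex^2 : |u_i|^2 + |v_i|^2 < \eps\}$ equipped with $\omega_i = \mathrm{Re}(du_i \wedge dv_i)$ and moment map $\F_i(u_i, v_i) = u_i v_i$; the identity map $\Complex \to \Complex$ then serves as the symplectic normal chart at the $i$th focus-focus point.

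As a first step, I would apply the smooth gluing procedure from Lemma \ref{gluingMapsDetermineTheSingulatiry} to build the underlying smooth $n$-pinched singularity $\F\colon (M^4, L) \to (N^2, Q)$ by identifying neighborhoods of the boundary cylinders of adjacent $U_i$'s via fiber-preserving diffeomorphisms covering $\phi_{i,i+1}$ on the base. This endows $M^4$ with an open cover $W_1, \dots, W_n$ such that each $W_i$ carries a local symplectic form $\omega_i$ inherited from the standard model. The hypothesis $\mathrm{Im}\,\phi_{i,j}(z) = \mathrm{Im}\,z$ ensures that the local Hamiltonians $\mathrm{Im}\,\F_i$ agree on overlaps and assemble into a single global smooth function $\mu \colon M^4 \to \R$.

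The crucial step is to upgrade the smooth fiber-preserving gluings to symplectic ones. On each overlap $W_i \cap W_j$, which lies in the regular part of the fibration and foliates into cylinders, both $\omega_i$ and $\omega_j$ make the Lagrangian fibration Lagrangian and have $\mu$ as the Hamiltonian of the same $S^1$-action. Consequently $\omega_j - \omega_i$ is closed and vanishes on vertical tangent vectors and on the Hamiltonian vector field of $\mu$; a relative Poincar\'e lemma then produces a primitive $\omega_j - \omega_i = d\alpha$ with the same vanishing properties. Applying the Moser method to the linear interpolation $\omega_t = (1-t)\omega_i + t\omega_j$, one defines a time-dependent vector field $X_t$ by $\iota_{X_t}\omega_t = -\alpha$; the vanishing properties of $\alpha$ guarantee that $X_t$ is tangent to the fibers of $\F$ and commutes with the $S^1$-action, so that its flow is a fiber-preserving isotopy rectifying $\omega_j$ to $\omega_i$. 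Multiplying $\alpha$ by a cutoff supported away from the focus-focus points localizes the isotopy to a neighborhood of the overlap, leaving the standard forms near the singular points intact.

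After this rectification the local standard forms assemble into a global symplectic form $\omega$ on $M^4$ making $\F$ a Lagrangian fibration, and by construction the symplectic normal charts remain the identity on each $U_i$, so the symplectic gluing maps are exactly the prescribed $\phi_{i,j}$. The main obstacle will be producing a primitive $\alpha$ with the right vanishing properties and verifying that the resulting Moser isotopy preserves the symplectic gluing structure without introducing spurious corrections that would alter the $\phi_{i,j}$. The freedom to modify the real part of a symplectic normal chart by a flat function (Remark \ref{rem:uniqueness}) should absorb any residual discrepancy in the non-periodic direction of the fibration generated by $\mathrm{Re}\,\F$, ensuring the symplectic gluing maps coincide exactly with the prescribed ones.
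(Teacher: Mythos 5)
Your route is genuinely different from the paper's, and as written it has real gaps. The paper does not glue symplectic local models at all: it starts from a single symplectic focus-focus singularity with \emph{trivial} gluing maps (an $n$-fold cover of the one-pinch singularity, Remark~\ref{rem:trivsing}), keeps the symplectic form and the $S^1$-Hamiltonian $F$ fixed, and modifies only the other integral, replacing $H$ by $\tilde H=\sum_i G_i\cdot\mathrm{Re}\,\phi_{1,i}^{-1}(H,F)$ for an $S^1$-invariant partition of unity $\{G_i\}$ concentrated near the respective pinch points. The hypothesis $\mathrm{Im}\,\phi_{i,j}(z)=\mathrm{Im}\,z$ is used precisely so that $F$ can be left untouched; one then only checks that $d\tilde H\wedge dF$ is a nonvanishing multiple of $dH\wedge dF$ along $L$, so the singular fiber and its focus-focus points are unchanged while the gluing maps become the prescribed ones. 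This sidesteps every symplectic-gluing issue and, as Remark~\ref{rem:family} notes, produces smooth families on a fixed symplectic manifold, which is needed in Section~\ref{sec:example}.

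Your plan --- glue the smooth model first, then rectify the local forms on the overlaps by a fiber-preserving Moser isotopy --- is plausible, but the three steps you defer are exactly the hard ones. (i) A primitive $\alpha$ of $\omega_j-\omega_i$ vanishing on the fibers is not delivered by a formal relative Poincar\'e lemma: the fibers of the overlap are annuli with nontrivial $H^1$, so you must show that the fiber integral of $\omega_j-\omega_i$ over the circle orbits vanishes. This is where the hypothesis actually enters: both forms have $\mathrm{Im}\,\F$ as the Hamiltonian (hence action) of consistently oriented circle orbits, so both fiber integrals equal $\pm2\pi\,\diff{(\mathrm{Im}\,z)}$, the periods of any primitive are constant in the base, and they can then be normalized to zero by subtracting a multiple of the angular form before killing the remaining fiberwise-exact part. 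You assert the conclusion without this argument, and to even state it you must first arrange that the two local $S^1$-actions on the overlap coincide (they are flows of the same function for \emph{different} forms, so this is a normalization, not a tautology). (ii) Nondegeneracy of $(1-t)\omega_i+t\omega_j$ is not automatic for two symplectic forms sharing a Lagrangian foliation; it needs the orientation compatibility of Assumption~\ref{assumption4} together with the agreement of $\iota_X\omega_i=\iota_X\omega_j=\diff{(\mathrm{Im}\,\F)}$ from (i). (iii) Multiplying $\alpha$ by a cutoff does not leave the forms intact where the cutoff varies, since there $\diff{(\chi\alpha)}\neq\omega_j-\omega_i$; the standard fix is to shrink the overlap and re-glue along the region where $\chi\equiv1$, which should be said explicitly. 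None of these is fatal, but each is a genuine step; given that the paper's Hamiltonian-modification argument avoids all three, you should either supply them or adopt that argument.
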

\begin{proof}
The idea of the proof is to take a symplectic focus-focus singularity with identity gluing maps and then appropriately modify the Lagrangian fibration. Let $\F = (H, F) \colon (M^4, L) \to (\R^2, 0) $ be such a ``trivial'' singularity (see Remark \ref{rem:trivsing}). Since the gluing maps are trivial, one can assume that $(H, F)$ is a normal chart for each of the focus-focus points $P_1, \dots, P_n \in L$. Moreover, the function $F$ generates a global $S^1$-action.\par Now,  we change this Lagrangian fibration by modifying the function $H$. To that end, we take a cover   of a neighborhood of $L$ in $M^4$ by $S^1$-invariant open sets $U_1, \dots, U_n$ such that $P_i \in U_i$, and $P_i \notin \bar U_j$ for $j \neq i$. (Here $\bar U_j$ is the closure of $U_j$.) Let also $\{G_i\}$ be a partition of unity subordinate to the cover $\{U_i\}$. Without loss of generality, it can be assumed that the functions $G_i$ are invariant under the  $S^1$-action generated by $F$ (if not, we replace them by their averaged counterparts). We then define a new function $\tilde H \colon M^4 \to \R$ by
\begin{equation}\label{tildeh}
\tilde H := \sum_{i = 1}^n G_i \cdot \mathrm{Re}\, \phi_{1,i}^{-1}(H,F).
\end{equation}
The functions $\tilde H$ and $F$ Poisson-commute and thus give rise to a new Lagrangian fibration on $M^4$. As the initial fibration, the modified one has $L$ as its singular fiber of focus-focus type. Indeed, from \eqref{tildeh} we get
$$ 
(d \tilde H \wedge d F)\vert_L = J (d  H \wedge d F)\vert_L,$$
where the function $J$ is given by
$$
J := \sum_{i = 1}^n G_i \frac{\partial}{\partial H} \mathrm{Re}\, \phi_{1,i}^{-1}(H,F).
$$
Further, note that 
$$
\frac{\partial}{\partial H} \mathrm{Re}\, \phi_{1,i}^{-1}(H,F) > 0,
$$
since $\phi_{1,i}$ is orientation-preserving and has the form $(x,y)\mapsto (\dots, y)$, and also that $G_i \geq 0$, with at least one of $G_i$'s being strictly positive. Therefore, the function $J$ does not vanish, and singular points of the modified fibration which belong to $L$ are  the same as for the initial fibration. Furthermore, these points are of focus-focus type, because the two fibrations coincide near each of the singular points. So, the fibration defined by $\tilde H$ and $F$ has $L$ as its singular fiber of focus-focus type. Furthermore, it is easy to see that the gluing maps for the new fibration are $\phi_{i,j}$'s. Thus, the proposition is proved.
\end{proof}
\begin{remark}\label{rem:trivsing}
A symplectic focus-focus singularity with trivial gluing maps can be constructed, for instance, as an $n$-fold covering of a focus-focus singularity with $1$ pinch point. Indeed, for a sufficiently small neighborhood $U$ of a focus-focus singularity with $1$ pinch point, its fundamental group is isomorphic to $\Z$. Therefore, one can construct an $n$-fold covering $\pi \colon V \to U$ corresponding to the index $n$ subgroup $n\Z \subset  \pi_1(U)$. Further, one lifts the symplectic structure and the Lagrangian fibration from $U$ to $V$ using the projection $\pi$. Clearly, the so-obtained fibration on $V$ has a focus-focus fiber with $n$ pinch points as the $\pi$-preimage of the singular fiber in $U$. Furthermore, as normal charts for the focus-focus fibration on $V$ one can take the normal chart for the fibration on $U$. Hence, all gluing maps for the focus-focus fibration on $V$ are trivial for suitable choice of normal charts, as desired.
\end{remark}
\begin{remark}\label{rem:family}
Although there exist different approaches to the proof of Proposition \ref{realization} (see e.g. \cite[Section 7]{SanFF}), the advantage of our approach is that it allows one to construct focus-focus singularities with all possible gluing maps on one and the same symplectic manifold. Moreover, given a family of germs  $\{\phi_{i,j}^t\}$ depending smoothly on a parameter $t \in \R$, our construction produces a smooth family of singularities. This will be important in Section \ref{sec:example}.
\end{remark}
\begin{lemma}\label{incl}
    For any germ $f\colon(\mathbb \Complex,0) \to (\mathbb R,0)$ such that $d f(0) \neq 0$ there exists a liftable germ $\psi \in \ADiff_0(\Complex)$ such that $\mathrm{Im}\, \psi = f$.
\end{lemma}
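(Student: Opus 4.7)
The plan is to look for $\psi$ in the form $\psi(z) = z h(z)$, which is automatically liftable by Theorem~\ref{liftable}, and choose $h \in \Cont^\infty_0(\Complex, \Complex)$ so that both $\mathrm{Im}\,\psi = f$ and $h(0) \neq 0$ (the latter condition guaranteeing that $\psi$ is actually a local diffeomorphism).

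Writing $z = x + iy$ and $h(x,y) = a(x,y) + i b(x,y)$ with $a,b$ real-valued, one computes
\begin{equation}
\mathrm{Im}\,\psi(z) = x\, b(x,y) + y\, a(x,y),
\end{equation}
so the problem reduces to finding smooth functions $a, b$ on a neighborhood of the origin with
$x\, b(x,y) + y\, a(x,y) = f(x,y)$ and $(a(0), b(0)) \neq (0,0)$.

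The existence of such $a,b$ follows immediately from the Hadamard lemma: since $f(0) = 0$, there exist smooth germs $\tilde f_1, \tilde f_2$ with $f = x \tilde f_1 + y \tilde f_2$ and $\tilde f_1(0) = \partial_x f(0)$, $\tilde f_2(0) = \partial_y f(0)$. Setting $b := \tilde f_1$, $a := \tilde f_2$ gives $h(0) = \partial_y f(0) + i\, \partial_x f(0)$, which is nonzero precisely because $d f(0) \neq 0$.

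To conclude, I would verify that the resulting $\psi$ is a local diffeomorphism (e.g.\ by a direct Jacobian computation, which gives $|h(0)|^2$ at the origin) and observe that liftability is then free from Theorem~\ref{liftable}. There is no real obstacle here: the only thing to check carefully is that the non-degeneracy condition $df(0) \neq 0$ translates exactly into $h(0) \neq 0$, which is what makes the linearly-dominant ansatz $\psi(z) = zh(z)$ work without needing to also allow the antiholomorphic branch $\psi(z) = \bar z h(z)$.
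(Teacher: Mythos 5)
Your proposal is correct and is essentially the paper's own argument: the paper also writes $f = xv(z) + yu(z)$ via Hadamard's lemma and sets $\psi = (x+\i y)(u+\i v)$, which is exactly your ansatz $\psi(z) = zh(z)$ with $h = u + \i v$, with $h(0)\neq 0$ following from $df(0)\neq 0$. Nothing further is needed.
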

\begin{proof}
  Let $z = x+\i y$ be the coordinate in $\Complex$. Write $f$ as
$
        f=xv(z)+yu(z)
$, where  $u,v \in  \Cont^\infty_0(\Complex , \R)$,
    and set
$$
        \psi(z) :=xu(z)-yv(z) + \i f(z).
$$
Then, from the condition $d f(0) \neq 0$ it follows that $\psi$ is a diffeomorphism. Furthermore, 
$$
\psi = xu-yv + \i (xv+yu) = (x+\i y)(u+\i v),
$$
so $\psi$ is liftable, as desired.
\end{proof}
\begin{proof}[Proof of Theorem \ref{thm:symplectic}] The statement of the theorem can be reformulated as follows: Any smooth focus-focus singularity (in the sense of Definition~\ref{def:focussmooth}) is diffeomorphic to a symplectic one. Thanks to Theorem \ref{thm:thm1} and Proposition \ref{realization}, this is equivalent to saying that, for any tuple $(\phi_{1,2},  \dots, \phi_{1,n}) \in \Diff_0(\Complex)^{n-1}$, its orbit  under the  $\ADiff_0(\Complex)^n$ action has a representative of the form $( \tilde \phi_{1,2},  \dots,  \tilde \phi_{1,n}) $ where $\mathrm{Im}\,  \tilde \phi_{1, i}(z) = \mathrm{Im}\, z$ for every $i = 2, \dots, n$.  
 To prove the latter, take any $\phi_{1,2},  \dots, \phi_{1,n} \in \Diff_0(\Complex)$. Then, by Lemma \ref{incl}, there exist liftable $\psi_2, \dots, \psi_n \in \ADiff_0(\Complex)$ such that $\mathrm{Im}\, \psi_i =  \mathrm{Im}\,\phi_{1,i}$. Notice that $$\mathrm{Im}\, \psi_i( \phi_{1,i}^{-1}(z)) = \mathrm{Im}\,  \phi_{1,i}( \phi_{1,i}^{-1}(z)) =  \mathrm{Im}\,z. $$
Therefore, the inverse map $ \tilde \phi_{1,i} := (\psi_i \circ \phi_{1,i}^{-1})^{-1} =  \phi_{1,i} \circ \psi_i^{-1}$ also satisfies  $\mathrm{Im}\,  \tilde \phi_{1, i}(z) = \mathrm{Im}\, z$, as desired.
\end{proof}

\subsection{First order invariants in terms of eigenvalues}

Let $\F$ be a symplectic focus-focus singularity, and let $H$ be a generic function constant on the fibers of $\F$. (Here generic means that $\partial H / \partial( \mathrm{Re}\,\phi_i) \neq 0$, where $\phi_i$ is  the symplectic normal chart corresponding to the singular point $P_i$.)
In this section we express first order invariants of $\F$ in terms of eigenvalues of the corresponding Hamiltonian vector field $\sgrad H$ linearized at singular points. \par
Let $A_i \colon T_{P_i}M^4 \to  T_{P_i}M^4$ be the linearization of $\sgrad H$ at the singular point $P_i$. Then the eigenvalues of $A_i$ form a quadruple symmetric with respect to the real and imaginary axes. We choose one eigenvalue out of the quadruple in the following way. Let  $\phi_i$ be the symplectic normal chart corresponding to the point $P_i$. Then we have $$H = a_i\mathrm{Re}\,\phi_i + b_i\mathrm{Im}\,\phi_i +\dots$$
(where dots denote higher order terms), and eigenvalues of $A_i$ are exactly $\pm a_i \pm \i b_i$. Then, as a preferred eigenvalue, we choose $\lambda_i := a_i + \i b_i$. This gives a canonical way to choose  eigenvalues $\lambda_1, \dots, \lambda_n$ (one for each point $P_i$), up to simultaneous complex conjugation or simultaneous multiplication by $-1$.  (Here we assume that the normal charts are chosen in such a way that their orientations agree.)

\begin{proposition}\label{prop3.4}
Assume that $\lambda_i$ is the eigenvalue of the linearization of $\sgrad H$ at the singular point $P_i$, chosen as described above. Then the first order invariants $\mu_2, \dots, \mu_n$ are given by
   \begin{align}\label{evFormula}
        \mu_i = \frac{ \lambda_i - \lambda_1}{ { \lambda}_i + \bar \lambda_1}.
    \end{align}
\end{proposition}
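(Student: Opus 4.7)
My plan is to compute everything in the fixed coordinate $z := \phi_1$ on the base. In this coordinate $\phi_1 = z$, while the other symplectic normal charts are $\phi_i = \phi_{1,i}^{-1}(z)$ (because the gluing map is $\phi_{1,i} = \phi_1 \circ \phi_i^{-1}$). Around $P_i$, the function $H$ has the expansion
\begin{equation}
H \;=\; a_i\,\mathrm{Re}\,\phi_i + b_i\,\mathrm{Im}\,\phi_i + \dots \;=\; \mathrm{Re}\bigl(\bar\lambda_i\,\phi_i\bigr) + \dots,\qquad \lambda_i = a_i + \i b_i,
\end{equation}
so applying this identity in the chart $z$ for both $i=1$ and general $i$ will give two expressions for $H$ that must agree. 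Writing the linear part of the germ $\phi_{1,i}^{-1}$ in the form $Az + B\bar z$, the identity
$\mathrm{Re}(\bar\lambda_1 z) = \mathrm{Re}(\bar\lambda_i(Az + B\bar z))$ yields the linear equation
$\bar\lambda_i A + \lambda_i \bar B = \bar\lambda_1$.

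The second equation comes from the symplectic nature of the charts. As recalled before Proposition~\ref{realization}, the symplectic gluing maps satisfy $\mathrm{Im}\,\phi_{1,i}(z) = \mathrm{Im}\,z$, hence the same holds for $\phi_{1,i}^{-1}$. At the linear level, $\mathrm{Im}(Az + B\bar z) = \mathrm{Im}\,z$ is equivalent to the single complex relation $A - \bar B = 1$. Together with the previous equation, a short algebraic manipulation (using $\lambda_i + \bar\lambda_i = 2\,\mathrm{Re}\,\lambda_i \neq 0$, which is the genericity hypothesis on $H$) solves for
\begin{equation}
A \;=\; \frac{\bar\lambda_1 + \lambda_i}{\lambda_i + \bar\lambda_i},\qquad B \;=\; \frac{\lambda_1 - \lambda_i}{\lambda_i + \bar\lambda_i}.
\end{equation}

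Finally, I invert at the $1$-jet level to get the linear part of $\phi_{1,i}$ itself. The inverse of an $\R$-linear map $z \mapsto Az + B\bar z$ is $z \mapsto A' z + B'\bar z$ with $A' = \bar A/(|A|^2 - |B|^2)$ and $B' = -B/(|A|^2 - |B|^2)$. Plugging into the formula from Theorem~\ref{theorem:firstOrderInv},
\begin{equation}
\mu_i \;=\; \frac{\partial\phi_{1,i}/\partial\bar z\,(0)}{\overline{\partial\phi_{1,i}/\partial z\,(0)}} \;=\; \frac{B'}{\bar{A'}} \;=\; -\frac{B}{A} \;=\; \frac{\lambda_i - \lambda_1}{\lambda_i + \bar\lambda_1},
\end{equation}
which is \eqref{evFormula}.

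The only real obstacle is bookkeeping: one has to keep track of the several possible sign/orientation choices (the $\pm$ ambiguity in choosing $\phi_i$, the requirement that their orientations be consistent, and the ambiguity of $\lambda_i$ up to simultaneous conjugation or simultaneous sign change noted before the proposition) and verify that the right-hand side of \eqref{evFormula} is invariant under exactly the ambiguities allowed in the definition of $\mu_i$ given by Theorem~\ref{theorem:firstOrderInv} (multiplication of all $\mu_i$ by a unit complex number and simultaneous conjugation). This is a direct check: simultaneously conjugating all $\lambda_j$ conjugates all $\mu_i$, and simultaneously negating all $\lambda_j$ leaves every $\mu_i$ unchanged, so the formula is well-posed.
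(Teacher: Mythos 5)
Your computation is correct and follows essentially the same route as the paper: expand $H$ in each symplectic normal chart, use $\mathrm{Im}\,\phi_{1,i}(z)=\mathrm{Im}\,z$ to pin down the $1$-jet of the gluing map, and read off $\mu_i$ from Theorem~\ref{theorem:firstOrderInv}. The only difference is that the paper first normalizes $H=\mathrm{Re}\,\phi_1$ (so $\lambda_1=1$) after observing that the right-hand side of \eqref{evFormula} is independent of the choice of generic $H$, whereas you carry a general $\lambda_1$ through the linear algebra; both are fine.
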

\begin{proof}
First of all notice that if $H$ is replaced by another generic Hamiltonian $\tilde H$, then the corresponding eigenvalues change as $ \lambda_i \mapsto a\lambda_i + b \i $, where $a,b \in \R$ are the same for all $i$'s. Therefore, the expression on the right-hand side of \eqref{evFormula} does not depend on the choice of the Hamiltonian.\par 

Let  $\phi_i$ be the symplectic normal chart corresponding to the point $P_i$. To compute the invariants $\mu_i$, we set $H := \mathrm{Re}\,\phi_1$. Writing $H$ in terms of the normal chart $\phi_i$, we get $$H = a_i\mathrm{Re}\,\phi_i + b_i\mathrm{Im}\,\phi_i + \dots$$ Since $H =\mathrm{Re}\,\phi_1$, and $\mathrm{Im}\,\phi_1 = \mathrm{Im}\,\phi_i$, the gluing map $\phi_{1,i} = \phi_1 \circ \phi_i^{-1}$ has the form $$x + \i y \mapsto a_i x + b_i y + \i y + \dots,$$
so
$$
\mu_i = \frac{{\partial \phi_{1,i}}/{\partial\bar z}(0)}{\phantom{\int}\overline{{{\partial\phi_{1,i}}}/{\partial   z}}(0)\phantom{\int}} = \frac{a_i + b_i\i  - 1}{a_i + b_i\i  + 1} =  \frac{ \lambda_i - \lambda_1}{ { \lambda}_i + \bar \lambda_1},
$$
where we used that $\lambda_i = a_i + b_i\i $ and $\lambda_1 = 1$ due to the choice of $H$.
\end{proof}

\begin{remark} The procedure of choosing one eigenvalue from a quadruple can also be performed without knowing the normal charts. First of all, one should choose $\lambda_i$'s such that the sign of $\mathrm{Re}\, \lambda_i$ is the same for all $i = 1, \dots, n$. (One has $\mathrm{Re}\, \lambda_i \neq 0$ since $H$ is generic.) Furthermore, one can distinguish between $\lambda_i$ and $\bar \lambda_i$ in the following way. Instead of a particular Hamiltonian $H$, consider the whole $2$-dimensional family of commuting Hamiltonians, $a H + bF$. Then the corresponding linearization at $P_i$ depends on the parameters $a,b$, and  $\lambda_i$ becomes a bilinear function of $a,b$: $\lambda_i = \lambda_i(a,b)$. Further, according to Theorem \ref{th:focus}, our symplectic focus-focus singularity admits a global $S^1$-action. Although the generator of this action does not have to be of the form $a H + bF$, it is of such form up to higher order terms. So, there exist $a,b \in \R$ such that  $\lambda_i(a,b) = \pm \i$ for each $i = 1, \dots, n$. Then we choose $\lambda_i$ in such a way that $\lambda_i(a,b) = \i$ for any $i$ (or $-\i$ for any $i$). 
\end{remark}

\section{An obstruction to smooth almost direct product decomposition}\label{sec:example}


In this section we construct a Lagrangian fibration in dimension $6$ with a rank $1$ focus-focus singularity  which is homeomorphic to the direct product of a rank $0 $ focus-focus singularity and a trivial fibration, but not diffeomorphic to it. This disproves  a conjecture stated by Zung in  \cite{AL} which says that any non-degenerate singularity can be (semilocally, i.e. in the neighborhood of the singular fiber) {\it smoothly} decomposed into an almost direct product of elementary bricks of four types:  regular, elliptic, hyperbolic, and focus-focus..\par
Our construction is as follows. Take a family $\F_t$ of double-pinched symplectic focus-focus singularities on $(M^4, \omega)$ depending on the parameter $t \in (a,b) \subset \R$ in such a way that the $\mu$-invariant defined in Section \ref{sec:dpinch} varies within the family: $\mu = \mu(t)$. (The existence of such a family follows from Proposition \ref{realization}, see also Remark \ref{rem:family}.)  Such a family gives rise to a Lagrangian fibration on $M^6 := M^4 \times (a,b) \times S^1$ endowed with the symplectic structure $\omega + \diff t \wedge \diff \phi$, where $\phi$ is the coordinate on $S^1$. The corresponding moment map $\tilde \F \colon M^6 \to \R^3$ is given by $\tilde \F(x, t, \phi) = (\F,t) $. This fibration has a focus-focus singularity of rank $1$, with two critical circles on each fiber. By construction, it is homeomorphic to the direct product of a double-pinched rank $0$ focus-focus singularity and a regular foliation of an annulus by concentric circles.
\begin{proposition}
This singularity is not diffeomorphic to an (almost) direct product.
\end{proposition}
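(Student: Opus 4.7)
The plan is to associate, to every point $P$ of the $2$-dimensional critical set of $\tilde\F \colon M^6 \to N^3$, a smooth $\mu$-invariant $\mu(P) \in [0,1)$ which is preserved by fiberwise diffeomorphisms. The argument then reduces to showing that this function is forced to be locally constant on the critical set of any almost direct product, whereas it is non-constant by construction in our example.

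To define $\mu(P)$, I would adapt the Hessian construction from Section \ref{sec:foics}. At a rank-$1$ focus-focus point $P$, the intrinsic Hessian of $\tilde\F$ is a symmetric bilinear form
$$
\diff^2_P \tilde\F \colon \ker \diff_P\tilde\F \times \ker \diff_P\tilde\F \to W, \qquad W := \T_{\tilde\F(P)}N^3 \, / \, \mathrm{Im}\, \diff_P\tilde\F,
$$
with $W$ of dimension $2$. In suitable local coordinates $\tilde\F$ has the normal form $(u,v,s) \mapsto (uv, s)$, and the $S^1$-direction tangent to the critical set (which lies in $\ker \diff_P\tilde\F$) is precisely the null space of this form. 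Hence $\diff^2_P \tilde\F$ descends to a non-degenerate $W$-valued form on a $4$-dimensional quotient, to which the uniqueness statement of Section \ref{sec:foics} applies verbatim and produces a canonical pair $\pm J(P)$ of complex structures on $W$. Since each singular fiber meets the critical set in two circles (one per pinch point), at every such $P$ one obtains two pairs $(\pm J_1(P), \pm J_2(P))$ on the same plane $W$; Proposition \ref{firstOrderInavriants}, specialized to $n = 2$, then assigns to this configuration a single number $\mu(P) \in [0,1)$. Naturality of the Hessian ensures that $P \mapsto \mu(P)$ is a smooth diffeomorphism invariant of the critical set.

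Suppose that $\tilde\F$ were diffeomorphic to an almost direct product $(\F^{ff} \times \F_{reg})/\Gamma$. On the cover, the Hessian splits as a block sum, and its non-trivial block is pulled back from the $4$-dimensional focus-focus factor; hence both $W$ and the pair $(\pm J_1, \pm J_2)$ depend only on the $\F^{ff}$-factor and are constant along the regular factor. Because $\Gamma$ acts component-wise and preserves the $\mu$-invariant of $\F^{ff}$, the function $\mu$ descends to a locally constant function on the critical set of the quotient. In our construction, by contrast, the preimage of a slice $\{t\} \times S^1 \subset (a,b) \times S^1$ is the Lagrangian product of $(M^4,\omega,\F_t)$ with $S^1$, so at any critical point $P$ lying over $t$ the pair $(\pm J_1(P), \pm J_2(P))$ coincides with the pair attached to the rank-$0$ singularity $\F_t$. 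Therefore $\mu(P) = \mu(t)$, which is non-constant by the choice of the family, contradicting local constancy on the quotient critical set.

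The step I expect to be the main obstacle is the first: one has to verify carefully that, starting from the normal form $(u,v,s)\mapsto (uv,s)$, the intrinsic Hessian quotient is naturally isomorphic to the rank-$0$ Hessian of $(u,v) \mapsto uv$, so that the argument of Section \ref{sec:foics} identifies a well-defined pair $\pm J(P)$ on $W$, and that these complex structures depend smoothly on $P$ along the critical set (so that the value $\mu(P)$ is a genuine smooth function, not merely a pointwise invariant). Granted this, naturality under fiberwise diffeomorphisms, triviality along the regular factor of a product, and $\Gamma$-equivariance in the almost-product case are all formal consequences of the construction.
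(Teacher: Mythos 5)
Your proposal is correct and follows essentially the same route as the paper: extend the Hessian/complex-structure construction of Section \ref{sec:foics} to rank-$1$ focus-focus points, obtain from the two critical circles over each critical value a pair of complex structures on the (normal-bundle fiber) cokernel, observe that the resulting $\mu$- (equivalently $\tr(J_2J_1^{-1})$-) function on the critical value set is a diffeomorphism invariant that must be locally constant for any almost direct product, and note that it is non-constant in the constructed family. Your explicit quotient of the $5$-dimensional kernel by the null direction of the Hessian is in fact a slightly more careful rendering of the step the paper states only implicitly.
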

\begin{proof}
The idea of the proof is to show that  the $\mu$-invariant is well-defined for rank $1$ focus-focus singularities with two critical circles on the fiber. In this case, this invariant is no longer a number, but a function on the set of critical values of the moment map. (The latter is a smooth curve $\Sigma \subset \R^3$.) The definition of this invariant is compatible with the rank $0$ case in the following sense: if a rank $1$ singularity is diffeomorphic to a direct product, then its  $\mu$-invariant is a constant function equal to the $\mu$-invariant of the corresponding rank $0$ singularity.\par
To define this invariant, we repeat the construction of Section \ref{sec:foics}. Namely, consider the Hessian of the moment map $\F$ at a rank one focus-focus point $P$. This is now a bilinear map
$$\diff^2_P\F \colon \Ker \diff_P \F \times \Ker \diff_P\F \to \Coker \diff_P\F,$$
where $ \Coker \diff_P\F := \T_{\F(P)}\R^3 \, / \, \mathrm{Im}\,  \diff_P\F$ is the cokernel of the differential of $\F$ at $P$.
As in Section \ref{sec:foics}, one shows that there exists a unique, up to sign, complex structure on $ \Coker \diff_P\F$ which lifts to a complex structure on $ \Ker \diff_P \F$ in such a way that $\diff^2_P\F$ becomes a complex bilinear map. Moreover, this complex structure does not depend on the choice of the point $P$ on the critical orbit. Indeed, for any other point $\tilde P$ on the same critical orbit, there is a fiberwise diffeomorphism (in fact, even a symplectomorphism) taking $P$ to $\tilde P$. But since our construction is invariant under diffeomorphisms, it follows that the corresponding complex structures on  $ \Coker \diff_P\F =  \Coker \diff_{\tilde P}\F$ are the same. \par
Now, considering both critical orbits on the same fiber, we get two complex structures on the space $\Coker \diff\F$. Also notice that the latter can be viewed as a fiber in the normal bundle $\N\Sigma$ to the set of critical values $\Sigma \subset \R^3$. (Indeed, for non-degenerate singularities, the image of the differential of the moment map is exactly the tangent space to the set $\Sigma$ of critical values.) Repeating the construction for every singular value $Q \in \Sigma$, we get two complex structures $J_1, J_2$ in the normal bundle $\N\Sigma$, and hence a  function $\tr(J_2J_1^{-1}) \colon \Sigma \to \R$ which is invariant under diffeomorphisms. (While this is not exactly the $\mu$-invariant, those invariants are functions of each other given by formula~\eqref{trmu}.)\par
It is clear that the so-constructed invariant should be a constant function on $\Sigma$ for direct-product-type singularities. Moreover, this invariant does not change under covering maps, so it is also constant for almost direct products. On the other hand, the $\mu$-invariant of the singularity constructed above is a non-trivial function. Therefore, this singularity is not diffeomorphic to any almost direct product.
\end{proof}

\bibliographystyle{plain}
\bibliography{sff}


\end{document}